\documentclass[12pt]{amsart}

\usepackage{graphicx}
\usepackage{amssymb, comment, color}
\usepackage[toc,page]{appendix}
\usepackage{epsf,overpic,amssymb,amscd,times,euscript}
\usepackage[urlcolor=blue,citecolor=blue]{hyperref}
\usepackage[nocompress]{cite}

\usepackage [autostyle, english = american]{csquotes}
\MakeOuterQuote{"}
\usepackage[utf8]{inputenc}
\usepackage[english]{babel}
\usepackage{amsmath}
\usepackage{amsthm}
\usepackage{amsfonts}
\usepackage{amssymb}
\usepackage{amscd}
\usepackage{bezier}
\usepackage{enumerate}
\usepackage{tikz}
\usepackage{graphicx}
\usepackage{hyperref}
\usepackage{upgreek}

\makeatletter
\newcommand{\dtilde}[1]{{%
  \mathpalette\double@widetilde{#1}%
}}
\newcommand{\double@widetilde}[2]{%
  \sbox\z@{$\m@th#1\widetilde{#2}$}%
  \ht\z@=.9\ht\z@
  \widetilde{\box\z@}%
}
\makeatother

\hypersetup{colorlinks}

\usepackage{parskip}

\usepackage{bbm}

%Test for framing
% \DeclareDocumentEnvironment{MyFrame}{O{1cm}O{0.4pt}O{0.8cm}O{black}O{3}O{2ex}}
%   {\par\noindent\hskip-#3\bgroup\color{#4}%
%       \rule{#1}{#2}\hskip-\dimexpr#1-\dimexpr#1/#5-#2\relax%
%       \rule[-\dimexpr#1-\dimexpr#1/#5\relax]{#2}{#1}\egroup
%       \hfill\rlap{%
%         \bgroup\color{#4}%
%         \hskip-\dimexpr#1-#3\relax\rule{#1}{#2}%
%         \hskip-\dimexpr#1/#5\relax\rule[-\dimexpr#1-\dimexpr#1/#5\relax]{#2}{#1}%
%         \egroup
%     }%
%    \vskip-\dimexpr#1/#5+\dimexpr#1/#5-#6\relax%
%   }
%   {\par\nobreak\offinterlineskip\vskip-\dimexpr#1/#5+\dimexpr#1/#5-#6\relax\noindent%
%       \hskip-#3\bgroup\color{#4}%
%       \rule{#1}{#2}\hskip-\dimexpr#1-\dimexpr#1/#5-#2\relax%
%       \rule[-\dimexpr#1/#5-#2\relax]{#2}{#1}\egroup
%       \hfill\rlap{%
%         \bgroup\color{#4}%
%         \hskip-\dimexpr#1-#3\relax\rule{#1}{#2}%
%         \hskip-\dimexpr#1/#5\relax\rule[-\dimexpr#1/#5-#2\relax]{#2}{#1}%
%         \egroup
%     }%
%   \par
% }

% \DeclareMathOperator{\id}{id}
\newcommand{\id}{id}

%\makeatother
%\renewcommand{\qedsymbol}{\rule{0.6em}{0.6em}}
%\renewenvironment{proof}[1][\proofname]{\noindent{\bfseries #1.  }}{\qed}

%\def\bfdefault{b}

\newcommand{\N}{\mathbbm{N}}                     % the natural numbers
\newcommand{\Z}{\mathbbm{Z}}                     % the integer numbers
                     % the rational numbers
\newcommand{\R}{\mathbbm{R}}                     % the real line
                     % the complex plane
                     % the torus
                     % a field
                    % action
                    % energy
                     % action

\newcommand{\Fix}{\mathrm{Fix}}

       % the set
                 % implication arrow
\newcommand{\dist}{\mathrm{dist\,}}             % distance
               % Kernel
           % Cokernel
             % span
               % signum
             % diameter
               % Fredholm index
           % codimension
           % virtual dimension
             % support
            % graph
             % rank
               % range
               % diffeomorphisms group
                 % symmetric matrices
                 % symplectric matrices
\newcommand{\CP}{\mathbbm{CP}}                   % complex projective space
                   % real projective space
                      % Palais-Smale condition
               % Spectrum
               % Critical set
               % Fix point set
                 % Space of homomorphisms
             % gradient
             % Hessian
             % set of rest poinst
    %CZ index
    %Maslov index
    %Morse index
    %delbar_J
\newcommand{\Ham}{\mathrm{Ham}}    %Ham
     %Cont
    %Universal cover of Ham
     %Universal cover of Cont
    %RFH
\newcommand{\HF}{\mathrm{HF}}    %HF
\newcommand{\loc}{\mathrm{loc}} %loc
 % HM
\newcommand{\CF}{\mathrm{CF}}
 %SH
 %Volume
\newcommand{\im}{\mathrm{im}}

 %Lagrangian FLoer homology
  %rank
 %cylindrical contact homology
%\newcommand{\CC}{\mathrm{CC}} %cylindrical contact complex

%\newcommand{\dist}{\mathrm{dist}}
\newcommand{\dom}{\mathrm{dom}}

 %direct limit

\newcommand{\topo}{\mathrm{top}}

\newcommand{\hofer}{\mathrm{Hofer}}

\newcommand{\vol}{\mathrm{vol}}

 % Theorem-style environments

\newtheorem{mainthm}{\sc Theorem}           % numbered absolutely
\newtheorem{maincor}
{\sc Corollary}

\newtheorem{thm}{Theorem}[section]               % numbered absolutely
\newtheorem*{thm*}{Theorem}               % no number
\newtheorem{cor}[thm]{Corollary}        % numbered along with Theorem
\newtheorem*{cor*}{Corollary}        % no number
\newtheorem{lem}[thm]{Lemma}  
\newtheorem*{lem*}{Lemma}
\newtheorem{prop}[thm]{Proposition}     % numbered along with Theorem
         % numbered along with Theorem
    % numbered along with Theorem
       % numbered along with Theorem
        % numbered along with Theorem
      % numbered along with Theorem
\theoremstyle{definition}
      % numbered along with Theorem
\newtheorem{rem}[thm]{Remark}           % numbered along with Theorem  
           % numbered along with Theorem
 \newtheorem*{acknowledgement*}
 {\protect\acknowledgementname}
 \newtheorem*{qu*}{Question}
\newcounter{claim}
\newenvironment{claim}[1][]{\refstepcounter{claim}\par\noindent\underline{Claim~\theclaim:}\space#1}{}
\newenvironment{claimproof}[1]{\par\noindent\underline{Proof:}\space#1}{\qed}

 \providecommand{\acknowledgementname}{Acknowledgement}

\hyphenation{Liou-ville}
%\linespread{1.5}

%\author{Marcelo R.R. Alves}
%\thanks{M.R.R. Alves was supported by the ERC consolidator grant 646649  ``SymplecticEinstein'' and by the SFB/TRR 191 ``Symplectic Structures in Geometry, Algebra and Dynamics''.}
%\address{Marcelo R.R. Alves, D\'epartement de Math\'ematique\\
%Universit\'e Libre de Bruxelles, CP 218,
%Boulevard du Triomphe,
%BE-1050 Bruxelles,
%Belgium.}
%\email{\texttt{marcelorralves@gmail.com}}

\author{Matthias Meiwes}
\thanks{The author was supported by the ERC Starting Grant 757585 and the Israel Science Foundation Grant 938/22}

\address{Matthias Meiwes,
School of Mathematical Sciences, Tel Aviv University, Ramat Aviv, Tel Aviv 69978, Israel.}
%Chair for Geometry and Analysis, RWTH Aachen University, Jakobstrasse 2,
%DE-52064 Aachen, Germany.
\email{\texttt{matthias.meiwes@live.de}}

\title{On the barcode entropy of Lagrangian submanifolds} %Relative barcode entropy and horseshoes}
%\subjclass[2020]{37E30, 37J46}
\begin{document}
\maketitle
\begin{abstract}
This article deals with relative barcode entropy, a notion that was recently introduced by \c{C}ineli, Ginzburg, and G\"urel. We exhibit some 
 settings in closed symplectic manifolds for which the relative barcode entropy of a Hamiltonian diffeomorphism and a pair of Lagrangian submanifolds is positive.  In analogy to a result in the absolute case by the above authors, we obtain that the topological entropy of any horseshoe $K$ is a lower bound if the two  Lagrangians contain a local unstable resp.\  stable manifold in $K$. In dimension~$2$, we also estimate the relative barcode entropy of a pair of closed curves that lie in special homotopy classes in the complement of certain periodic orbits in $K$. Furthermore, we define a variant of relative barcode entropy  and exhibit first examples for which it is positive. As applications, certain robustness features of the volume growth and the topological entropy are discussed.  
\end{abstract}
%\tableofcontents
\section{Introduction and main results}
To study the asymptotic dynamics properties of a diffeomorphism $\psi$ on a manifold $M$, one might look at the growth of specific quantities that are assigned to its iterates $\psi^k$. As a natural example,  we can consider the growth of volumes of submanifolds under iterations: we can fix a submanifold $N$ in $M$ and consider the growth of volumes of $\psi^k(N)$; by looking at growth specifics that are independent of the metric, we might obtain some interesting dynamical quantities of $\psi$ "relative" to $N$. Moreover, by simultaneously considering $N$ in a suitable family of submanifolds,  this might yield some dynamical invariants assigned to $\psi$ itself. Especially interesting is the situation when there is exponential volume growth: due to results by Yomdin, \cite{Yomdin87}, and Newhouse, \cite{Newhouse88}, the supremum $\sup_{N\subset M}\Gamma(\psi;N)$ of the exponential volume growth rates \begin{align}\label{Gamma}\Gamma(\psi;N) := \limsup_{k\to \infty} \frac{\log (\vol(\psi^k(N)))}{k}
 \end{align}
 over all submanifolds $N$,  
is identical to the topological entropy, $h_{\topo}(\psi)$, of~$\psi$, provided $\psi$ is $C^{\infty}$ smooth. 
If $\psi$ is Hamiltonian, a related, though more abstract  notion is relative  barcode entropy, $\hbar(\psi;L_0,L_1)$, defined for suitable pairs $(L_0,L_1)$ of Lagrangian submanifolds, introduced and first studied by \c{C}ineli, Ginzburg, and G\"urel in \cite{CGG_barcode}. 
Relative barcode entropy provides a lower bound on volume growth, that is,  $\hbar(\psi;L_0,L_1) \leq \Gamma(\psi;L_0)$, \cite{CGG_barcode}.
In this article, we will discuss some natural settings for which $\hbar(\psi;L_0,L_1)$  is bounded from below. But first, let me give some further background and motivation.  

In the symplectic setting, various results have been obtained that deal with asymptotic dynamics features that are induced only by the  symplectic topology of the underlying manifold, together with maybe some mild additional assumptions on the diffeomorphisms. Several  results in this direction can be found in Polterovich' paper \cite{Polterovich_growth}. As an example dealing with volume growth, it holds that for any non-identical Hamiltonian diffeomorphism on a closed surface with positive genus there exists a curve $N$ such that the length of $\psi^k(N)$  grows at least linearly (for the case of the $2$-torus, see also \cite{Polterovich-Sikorav}).   Subsequently, various results of this type, concerning polynomial and exponential growth of Lagrangian submanifolds,  were obtained by Frauenfelder and Schlenk, \cite{FrauenfelderSchlenk2005,FrauenfelderSchlenk2006}. 
Topological entropy in this context 
has been 
studied in the contact setting, first for unit cotangent bundles in   \cite{MacariniSchlenk2011},   and 
 then in a variety of situations by  Alves and other authors, see     
\cite{AASS, Alves1, A2, Alves-Anosov, AlvesColinHonda, AlvesMeiwes, Meiwesthesis, Dahinden2018}. In one form or the other, an important part of the reasoning in the works above is that the complexity of some type of Floer homology or contact homology on the underlying manifold serves as a  "lower bound" on the complexity of the dynamics. The underlying theory that one considers might be  "relative"   (generators correspond to chords between Lagrangian/Legendrian submanifolds), as e.g.\ in \cite{MacariniSchlenk2011,AlvesMeiwes,AlvesColinHonda},  or  "absolute"  (generators correspond to periodic orbits), as e.g.\ in \cite{Alves1, Meiwesthesis}.

As discovered in \cite{CGG_barcode}, and 
 investigated further in \cite{CGG_subexp_barcode, Mazzucchelli_barcode, beomjun_barcode, CGGM_barcode},  
this general line of reasoning can be partly reversed: certain complexity features of the dynamics of a diffeomorphism  translate into growth of complexity of Floer data assigned to its  iterations, into \textit{barcode entropy}. 
This notion, introduced in \cite{CGG_barcode} and which has again a relative and an absolute version,  is defined naturally in the framework of persistence homology (for persistence homology in Floer theory see \cite{Usher_boundary_depth, polterovich2016autonomous,Usher_Zhang}  or the book \cite{polterovich2019topological}).
One assigns to a filtered Floer chain complex  a barcode, a multiset of intervals ("bars"), where, roughly speaking,  each bar captures a homology class that "persists" in the filtered homology  for a certain filtration window, and the length of the bar corresponds to the length of that filtration window. 
Barcode entropy then measures the exponential  growth rate of the number of bars, ignoring bars of very small size, for a  sequence of Floer chain complexes associated  to iterations of the diffeomorphism. 
More precisely, the \textit{relative} barcode entropy $\hbar(\psi;L_0,L_1)$ of a triple $(\psi,L_0,L_1)$, where $\psi$ is a Hamiltonian diffeomorphism on a symplectic manifold $M$, and 
$(L_0,L_1)$ 
 is a  pair of  Hamiltonian  isotopic,   monotone Lagrangian submanifolds in $M$, is defined as 
$$ \hbar(\psi;L_0,L_1) = \lim_{\epsilon\to 0} \limsup_{k\to \infty}\frac{ \log(b_{\epsilon}(\psi;L_0,L_1, k))}{k},$$
where $b_{\epsilon}(\psi;L_0,L_1, k)$ is the number of bars of length at least  $\epsilon$ in the Lagrangian Floer homology of $(L_0,\psi^{-k}(L_1))$\footnote{This definition makes sense if the pair $(L_0,L_1)$ is \textit{non-degenerate}, that is, if $L_0$ intersects $\psi^{-k}(L_1)$ transversally for all $k$. One can then naturally extend the definition to degenerate pairs, see \S \ref{sec:relbarent}.}. 
Analogously, by considering the Hamiltonian Floer chain complexes of $\psi^k$, one defines the \textit{absolute} barcode entropy $\hbar(\psi)$ of Hamiltonian diffeomorphisms $\psi$. 
Now,  the following lower bound on the \textit{absolute} barcode entropy $\hbar(\psi)$ in terms of the complexity of the dynamics of a Hamiltonian diffeomorphism $\psi: M\to M$ is obtained in \cite{CGG_barcode}.  Given that  $\psi$ has a compact hyperbolic invariant set $K$ in $M$  which is locally maximal, then $\hbar(\psi)$ is bounded from below by the topological entropy on $K$, \cite[Thm.~B]{CGG_barcode}, that is, 
\begin{align}\label{bar} \hbar(\psi)\geq h_{\topo}(\psi|_K). 
\end{align}
On the other hand,   some reverse inequalities are still satisfied.  It holds  that 
\begin{align}\label{bar_ent}\hbar(\psi;L_0,L_1) \leq \Gamma(\psi;L_0) \leq h_{\topo}(\psi)
    \end{align}
(as long as $\hbar(\psi;L_0,L_1)$  is defined),  \cite[Thm.\ A]{CGG_barcode}, and similarly,   $\hbar(\psi)$ bounds  $h_{\topo}(\psi)$ from below. 
In dimension $2$, \cite[Thm.\ C]{CGG_barcode}, this 
leads to the  intriguing identity   \begin{align}\label{equal_bar}\hbar(\psi) = h_{\topo}(\psi).
\end{align}

\color{black}
Inequalities \eqref{bar} and \eqref{bar_ent} give rise to the following circle of questions about relative barcode entropy: Are  there, in view of \eqref{bar} for the absolute case, bounds that are  reverse to that in \eqref{bar_ent}? For instance, given any  $\psi$ with  $h_{\topo}(\psi)>0$, does there exist  a pair of Lagrangian submanifolds $(L_0,L_1)$ such that $\hbar(\psi;L_0,L_1)>0$?  Or, a more approachable question: Assume we are in a setting where it is known that the volume growth $\Gamma(\psi;L)$ is positive, can we find for the same $\psi$ also pairs $(L_0,L_1)$ for which $\hbar(\psi;L_0,L_1)>0$?

Apart from the fact that those questions arise naturally given the inequalities \eqref{bar} and \eqref{bar_ent}, some motivation for them comes from stability properties of barcode entropy with respect to the Hofer-distance and the $\gamma$-distance on Hamiltonian isotopy classes of Lagrangian submanifolds: relative barcode entropy $\hbar(\psi;L_0,L_1)$, as a function in $L_0$ and $L_1$ varying in their Hamiltonian isotopy class, is lower semi-continuous with respect to those distances. And hence, in  view of the first inequality in \eqref{bar_ent},  lower estimates for relative barcode entropy might imply some  interesting robustness statements for the volume growth of Lagrangians (see \S \ref{sec:intro:stability} for further discussion). 

Some examples for which the relative barcode entropy is positive can already be derived directly from \eqref{bar}: by a well-known identification of Hamiltonian Floer homology in $M$ and Lagrangian Floer homology in the  product $M\times M$, one obtains the identity 
$\hbar(\psi) =   \hbar(\id \times \psi; \Lambda, \Lambda)$, where $\id \times \psi: M \times M \to M \times M$ is the graph map and $\Lambda \subset M \times M$ is the diagonal (cf.\ \cite{CGG_barcode}).   
Examples obtained via that identification are quite specific. In particular, this procedure does not yield examples on manifolds different from symplectic products, and it is for instance not at all clear from the above whether there exist even one surface diffeomorphism $\psi$ and two closed curves $L_0,L_1$  for which $\hbar(\psi;L_1,L_2)>0$.

Let us now state the main results of this article. 
 Throughout the article, we assume that the Lagrangian submanifolds  are monotone with first Chern number at least $2$, cf.\ \S \ref{sec:preliminaries}. Moreover, we assume that the Lagrangian submanifolds $L_0$ and $L_1$ when dealing with  $\hbar(\psi;L_0,L_1)$ are Hamiltonian isotopic. 
  We say that such pairs $(L_0,L_1)$  are \textit{admissible}.

\subsection{Barcode entropy for Lagrangian submanifolds that contain local stable and unstable disks} 
To formulate the first theorem, we recall that a hyperbolic invariant set $K$ of a diffeomorphism $\psi$ is \textit{locally maximal} if there exists a compact neighbourhood $U$ for which $K= \bigcap_{k\in \Z} \psi^k({U})$; such a neighbourhood is called \textit{isolating}. For  every point $q$  in $K$ and $\delta>0$, we 
 have \textit{local stable and unstable manifolds of size $\delta$},  $W^s_{\delta}(q)$ and  $W^u_{\delta}(q)$, see~\S \ref{sec:stable}, which, for small enough  $\delta$, are small Lagrangian disks 
passing through~$q$. The following might be considered as a relative version of inequality~\eqref{bar_ent}, that is, of Theorem~B in~\cite{CGG_barcode}. 
\begin{mainthm}\label{mainthm:higherdim}
Let $K$ be a locally maximal, topologically transitive  hyperbolic set.  If $(L_0,L_1)$ is an admissible pair  of Lagrangian submanifolds  in $M$ with   $W_{\delta}^u(q) \subset L_0 \text{ and } W_{\delta}^s(p) \subset L_1 \text{ for some }q,p, \in K, \delta>0$, then $$\hbar(\psi;L_0,L_1) \geq h_{\topo}(\psi|_K).$$
\end{mainthm}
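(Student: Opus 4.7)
My plan is to adapt the proof of Theorem~B of \cite{CGG_barcode} from the absolute setting to the Lagrangian setting, with intersection points of $(L_0,\psi^{-k}(L_1))$ playing the role of fixed points of $\psi^k$. Fix $\eta>0$. The goal is to produce, for arbitrarily large $k$, at least $e^{k(h_{\topo}(\psi|_K)-\eta)}$ intersection points of $L_0$ with $\psi^{-k}(L_1)$ whose Floer actions all lie in a fixed bounded interval, and then to invoke a relative crossing-energy estimate to convert them into at least that many bars of definite length in the Floer barcode of $(L_0,\psi^{-k}(L_1))$.

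For the counting step, I first appeal to the spectral decomposition of $K$ and, if necessary, pass to an iterate of $\psi$ so as to put $q$ and $p$ in a common topologically mixing basic piece -- this is harmless because both sides of the inequality scale by the same factor under iteration. Bowen's specification property then supplies, for every sufficiently large $k$, a $(k,\epsilon_0)$-separated set $E_k\subset K$ of cardinality at least $e^{k(h_{\topo}(\psi|_K)-\eta)}$ whose elements $x$ satisfy $d(x,q)<\delta/4$ and $d(\psi^k(x),p)<\delta/4$. The inclination ($\lambda$-) lemma then yields, for each $x\in E_k$, a corresponding intersection point $x'\in W^u_\delta(q)\cap\psi^{-k}(W^s_\delta(p))\subset L_0\cap\psi^{-k}(L_1)$ whose orbit $\delta$-shadows that of $x$; distinct elements of $E_k$ produce distinct intersection points. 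Since all these orbits remain in a compact neighborhood of $K$, the Floer actions of the corresponding generators, computed relative to a fixed reference chord, lie in a single bounded interval $I$ of length $L$ independent of $k$.

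The heart of the proof is a relative crossing-energy estimate: there should exist $c_0>0$, depending only on $K$, an isolating neighborhood $U$, the Lagrangians $L_0,L_1$, and the Floer data, such that every non-trivial Floer strip $u:\mathbb{R}\times[0,1]\to M$ with $u(\cdot,0)\subset L_0$ and $u(\cdot,1)\subset\psi^{-k}(L_1)$, asymptotic at least at one end to an intersection point of the kind produced above, satisfies $E(u)\geq c_0$. I expect this step to be the main obstacle. The argument should proceed as in the absolute case of \cite{CGG_barcode}: a sequence of strips with vanishing energy would Gromov-converge to a configuration either remaining in $K$ -- contradicting the hyperbolic splitting together with the distinctness of the asymptotes -- or escaping $U$, which would in turn contradict a monotonicity/isoperimetric bound relating escape distance to energy. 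The Lagrangian boundary conditions produce the main technical complication: although $W^u_\delta(q)\subset L_0$ and $W^s_\delta(p)\subset L_1$ are tangent to the unstable and stable bundles at $q$ and $p$, the full iterated Lagrangian $\psi^{-k}(L_1)$ can develop long, twisted pieces inside $U$, so the $\lambda$-lemma has to be invoked uniformly in $k$ to describe $\psi^{-k}(L_1)\cap U$ and to control the strips in its vicinity.

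Granting the crossing-energy estimate, the conclusion is a standard filtered-chain-complex argument. Fix $\epsilon\in(0,c_0)$. By the pigeonhole principle, some sub-interval $J\subset I$ of length $\epsilon$ contains at least
\[
N_k \;:=\; \lceil L/\epsilon\rceil^{-1}\, e^{k(h_{\topo}(\psi|_K)-\eta)}
\]
of the above intersection points. In the diagonalization of the filtered Floer complex, each such generator is either unpaired -- contributing an infinite bar -- or paired with another generator whose action differs by at least $c_0>\epsilon$ (since the pairing is witnessed by a Floer strip, to which the crossing-energy estimate applies). In either case it contributes a distinct bar of length at least $\epsilon$, so $b_\epsilon(\psi;L_0,L_1,k)\geq N_k$. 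Letting $k\to\infty$, then $\epsilon\to 0$, then $\eta\to 0$ yields $\hbar(\psi;L_0,L_1)\geq h_{\topo}(\psi|_K)$, as required.
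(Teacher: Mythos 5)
Your overall architecture matches the paper's: count $(q,p,\delta)$-chords via Bowen's specification theorem, establish a uniform crossing-energy bound for Floer strips asymptotic to such chords, and convert isolated generators into bars. But the crossing-energy estimate --- which you yourself identify as "the main obstacle" --- is precisely the heart of the proof, and what you offer for it is a sketch that does not work as stated. A Gromov-compactness/limiting argument is not available here in any straightforward form: the bound must be uniform in $k$, while the boundary condition $\psi^{-k}(L_1)$ changes with $k$ and degenerates wildly, so there is no fixed moduli problem in which a sequence of strips "with vanishing energy" can be extracted and made to converge. The paper's mechanism is different and does not pass through any limit: a pointwise a priori gradient bound (mean-value inequality for holomorphic curves with Lagrangian boundary, applied to the rescaled strips $\check u$) shows that if $E(u)<\epsilon$ then for \emph{every} $s$ the discrete orbit $(\hat u(s,n))_{0\le n\le k}$, extended by genuine orbits of the endpoints, is an $\eta$-pseudo-orbit. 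One then argues: (a) by the isolating-neighbourhood property and a connectedness argument in $s$, the whole continuous family of pseudo-orbits stays in $U$; (b) by the shadowing theorem it is $\delta$-shadowed by a continuous family of true orbits in $K$; (c) by expansivity and the fact that $\psi^k(W^u_{2\delta}(q))\pitchfork W^s_{2\delta}(p)$ has isolated intersections, this family is constant, forcing $x_-(0)=x_+(0)$ and then $E(u)=0$ because $\epsilon<\mathfrak h(M,L_i)$. None of this is recoverable from "converges to a configuration remaining in $K$, contradicting the hyperbolic splitting," which is not an argument. So the central estimate remains unproved in your proposal.

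Two further points on your final step. First, the claim that the Floer actions of the distinguished generators lie in an interval of length independent of $k$ is unjustified (the action difference of two intersection points of $L_0\cap\psi^{-k}(L_1)$ is the area of a connecting strip with boundary on $\psi^{-k}(L_1)$, which you have no control over as $k\to\infty$); fortunately it is also unnecessary, since the relevant algebraic lemma (Prop.~3.8 of \cite{CGG_barcode}, cf.\ Proposition~\ref{prop:algebaric_barcode}) requires no proximity of actions. Second, "each such generator is either unpaired or paired with another generator whose action differs by at least $c_0$, hence contributes a bar of length $\geq\epsilon$" is too quick: the pairing in the barcode arises after a change of basis that can mix the distinguished generators with arbitrary chains, so one cannot read the bar lengths off individual differential coefficients. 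The correct statement, via the $\epsilon$-isolation argument, yields $b_\epsilon\geq p/2$ rather than $p$; the factor of $2$ is harmless for the entropy, but the justification genuinely requires that lemma rather than the informal pairing argument you give.
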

Note that every locally maximal compact hyperbolic set contains a  topologically transitive one with the same  topological entropy. Note also that the condition to contain 
 $W_{\delta}^u(q)$ or $W_{\delta}^s(p)$ is a local condition. In fact, given $K$ as above,  it is possible to modify  any Lagrangian $L$ to one for which this condition is satisfied, keeping all but a small set inside $L$ fixed.

Together with \cite[Thm.~A]{CGG_barcode}, cf.\ \eqref{bar_ent}, and a well-known result by Katok about the approximation of topological entropy by that of  horseshoes in dimension~2,  \cite{KatokLyapunov_exp},  Theorem  \ref{mainthm:higherdim} provides the following result; it resembles the Yomdin-Newhouse identity (in dimension $2$). 
\setcounter{maincor}{1}
\begin{maincor}\label{cor:hbar=htop} If $\psi:M\to M$ is a Hamiltonian diffeomorphism on a closed symplectic surface, then 
$$\overline{\hbar}(\psi):=\sup \hbar(\psi;L_0,L_1) = h_{\topo}(\psi),$$
where the supremum runs over all admissible pairs of closed curves $(L_0,L_1)$ in $M$.
\end{maincor}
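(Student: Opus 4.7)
The plan is to combine Theorem~\ref{mainthm:higherdim} with the upper bound \eqref{bar_ent} from \cite[Thm.~A]{CGG_barcode} and Katok's horseshoe approximation in dimension two \cite{KatokLyapunov_exp}. The upper bound $\overline{\hbar}(\psi) \leq h_{\topo}(\psi)$ is immediate: the second inequality in \eqref{bar_ent} asserts $\hbar(\psi;L_0,L_1) \leq h_{\topo}(\psi)$ for every admissible pair, so passing to the supremum gives the claim.

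For the reverse inequality I would assume $h_{\topo}(\psi) > 0$ (otherwise there is nothing to show) and apply Katok's theorem to obtain, for every $\epsilon > 0$, a locally maximal, topologically transitive compact hyperbolic set $K_\epsilon \subset M$ with $h_{\topo}(\psi|_{K_\epsilon}) \geq h_{\topo}(\psi) - \epsilon$. For each such $K_\epsilon$ I need to produce an admissible pair of closed curves $(L_0, L_1)$ satisfying $W^u_\delta(q) \subset L_0$ and $W^s_\delta(p) \subset L_1$ for some $p, q \in K_\epsilon$ and some $\delta > 0$. Using the remark following Theorem~\ref{mainthm:higherdim}, I would start with any convenient admissible pair on $M$ — for instance, two Hamiltonian-isotopic non-contractible simple closed curves on a surface of positive genus, or two equators bounding equal area on $S^2$ — and locally modify $L_0$ near $q$ and $L_1$ near $p$ to incorporate the required arcs, keeping the modifications $C^0$-small and supported in small neighborhoods disjoint from the unperturbed parts of the other curve. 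Since such small supported modifications preserve both the Hamiltonian isotopy class and the monotonicity with minimal Maslov number $\geq 2$, the new pair is still admissible. Theorem~\ref{mainthm:higherdim} then delivers $\hbar(\psi;L_0,L_1) \geq h_{\topo}(\psi|_{K_\epsilon}) \geq h_{\topo}(\psi) - \epsilon$, so $\overline{\hbar}(\psi) \geq h_{\topo}(\psi) - \epsilon$, and letting $\epsilon \to 0$ completes the proof.

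The main obstacle is the local modification step: smoothly gluing a local unstable (resp.\ stable) arc into an existing closed curve without destroying admissibility. On a surface this reduces to surface-topology bookkeeping — for $\delta$ small enough the arc $W^u_\delta(q)$ is a simple embedded segment, the homotopy class of the curve is preserved by sufficiently small perturbations, and on $S^2$ the equal-area (i.e., monotonicity) condition can be restored by a small global Hamiltonian push. So I would expect this step to be technically delicate but not conceptually hard; the core analytic content of the corollary is already packaged inside Theorem~\ref{mainthm:higherdim} and Katok's theorem.
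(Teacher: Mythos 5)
Your proposal is correct and follows essentially the same route as the paper: the upper bound from \eqref{bar_ent}, Katok's horseshoe approximation, the local modification of an admissible pair to contain $W^u_\delta(q)$ and $W^s_\delta(p)$ (which is exactly the remark the paper makes after Theorem~\ref{mainthm:higherdim}), and then Theorem~\ref{mainthm:higherdim} for the lower bound. No substantive difference from the paper's argument.
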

\begin{rem}\label{rem:lsc}
It was shown in \cite{AlvesMeiwesBraids} that, if $M$ is a closed surface, the topological entropy $h_{\topo}(\psi)$ is lower semi-continuous with respect to the Hofer-distance on $\Ham(M,\omega)$. Hence, by Corollary~\ref{cor:hbar=htop} the same holds for the quantity $\overline{\hbar}(\psi)$, cf. \S \ref{sec:intro_strong}.  
\end{rem}
\begin{rem}
It would be interesting to understand whether also in higher dimensions, positive topological entropy   ($h_{\topo}(\psi)>0$)  implies that there exist pairs $(L_0,L_1)$ with $\hbar(\psi;L_0,L_1)>0$. The analogous statement for the absolute barcode entropy does not hold as examples of \c{C}ineli show, \cite{Cineli_example}.
\end{rem}
\subsection{Barcode entropy of closed curves in the complement of some horseshoe orbits}
To motivate the next result, let me recall the following fact for surface diffeomorphisms with positive topological entropy. In general, for any  continuous map $f:M\to M$ on a compact surface $M$ and any set of periodic orbits $\mathcal{O}\subset M$ of $f$, we obtain an induced action on the fundamental group in the complement $f_{\#}: \pi_1(M\setminus \mathcal{O})\to \pi_1(M,\setminus \mathcal{O})$ (up to some choices).  Furthermore, given a homotopy class $\alpha$ of closed curves in $M\setminus \mathcal{O}$, we can consider the exponential growth rate $\Gamma_{\mathcal{O}, \alpha}$ of $f^n_{\#}(\alpha)$ in  
$\pi_1(M\setminus \mathcal{O})$, in terms of the minimal length of words of generators necessary to represent $f^n_{\#}(\alpha)$, see \cite[\S 3.1.c]{Hasselblatt-Katok}.  
This growth rate is a lower bound, provided that $f$ is Lipschitz, to the growth rate of the length of closed curves representing~$\alpha$. That is, for any closed curve $L$ with $[L] = \alpha$ we have that 
$$\Gamma(f;L)\geq \Gamma_{\mathcal{O},\alpha}(f).\footnote{In fact, $\inf_{[L]=\alpha} \Gamma(f;L) = \Gamma_{\mathcal{O},\alpha}(f)$.}$$
%$$\Gamma_{\mathcal{O},\alpha}(f) = \inf \{\Gamma(\psi;L)\, |\, [L]=\alpha\}.$$
Let $f$ be now a $C^{1+\alpha}$-diffeomorphism with $h_{\topo}(f)>0$. By a result of Franks and Handel, \cite{FranksHandel1988}, there exist a periodic orbit $\mathcal{O}$ and a homotopy class $\alpha$ of curves in the complement such that $
\Gamma_{\mathcal{O},\alpha}(f)>0$. 
%Combined  with the work of  Katok and Mendoza, \cite[Suppl.]{Hasselblatt-Katok}
This result was improved by Alves and the author, \cite{AlvesMeiwesBraids}: For every $0<e<h_{\topo}(f)$, there exist $\mathcal{O}$ and $\alpha$ as above such that $$\Gamma_{\mathcal{O},\alpha}(f)>h_{\topo}(f)-e.$$ 
In particular, for any closed curve $L$ with $[L] = \alpha$, 
$$\Gamma(f;L) > h_{\topo}(f)-e.$$
%Define in analogy, for any finite invariant set    $\mathcal{O}$ (union of periodic orbits) and two homotopy classes $\alpha_0$ and $\alpha_1$ of curves in $M\setminus \mathcal{O}$, $$\hbar_{\mathcal{O},\alpha_0,\alpha_1}(\psi) := \inf \{\hbar(\psi;L_0,L_1) \,|\, [L_0] = \alpha_0, [L_1]=\alpha_1, (L_0,L_1) \text{ admissible}\}.$$
The following result states that if we allow $\mathcal{O}$ to be a finite union of periodic orbits, then we can replace in that result the exponential length growth rate  by relative barcode entropy.
 \setcounter{mainthm}{2}
\begin{mainthm}\label{thm:barcode_lagr}
 Let $\psi:M\to M$ be a Hamiltonian diffeomorphism with $h_{\topo}(\psi)>0$  on a closed symplectic surface $(M,\omega)$. Let $0<e<h_{\topo}(\psi)$. 
 Then there exist a finite invariant set $\mathcal{O}\subset M$  and homotopy classes $\alpha_0,\alpha_1$ of closed curves in $M\setminus \mathcal{O}$ such that for any admissible pair $(L_0,L_1)$ with $[L_i] = \alpha_i$, $i=0,1$, 
 $$ \hbar(\psi;L_0,L_1) > h_{\topo}(\psi) -e.$$
 \end{mainthm}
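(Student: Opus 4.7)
The plan is to reduce to Theorem~\ref{mainthm:higherdim} by first constructing a reference pair via Katok's horseshoe approximation, and then transferring the entropy estimate to arbitrary admissible pairs in suitable homotopy classes using a braid/action analysis in the spirit of \cite{AlvesMeiwesBraids}.

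First, since $h_{\topo}(\psi)>0$, Katok's horseshoe theorem \cite{KatokLyapunov_exp} yields a locally maximal, topologically transitive hyperbolic set $K$ with $h_{\topo}(\psi|_K)>h_{\topo}(\psi)-e/3$. I would pick $q,p\in K$ and $\delta>0$ small enough that $W^u_{\delta}(q)$ and $W^s_{\delta}(p)$ are embedded Lagrangian arcs, and complete them to simple closed curves $L_0^*$ and $L_1^*$ forming an admissible reference pair; Theorem~\ref{mainthm:higherdim} then gives $\hbar(\psi;L_0^*,L_1^*)\geq h_{\topo}(\psi)-e/3$. Next, using the Thurston--Nielsen construction of \cite{AlvesMeiwesBraids}, I would choose a finite union $\mathcal{O}$ of periodic orbits of $\psi$ inside $K$ such that the induced map on $\pi_1(M\setminus\mathcal{O})$ has a pseudo-Anosov component of topological entropy at least $h_{\topo}(\psi)-2e/3$, and such that the homotopy classes $\alpha_0:=[L_0^*]$ and $\alpha_1:=[L_1^*]$ in $\pi_1(M\setminus\mathcal{O})$ are captured by this pA piece. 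Concretely, this means that the minimal geometric intersection numbers $i(\alpha_0,\psi_{\#}^{-k}\alpha_1)$ grow exponentially at rate $>h_{\topo}(\psi)-e$.

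Now fix any admissible pair $(L_0,L_1)$ with $[L_i]=\alpha_i$. The intersection count $|L_0\cap\psi^{-k}(L_1)|$ dominates the minimal geometric intersection number in $M\setminus\mathcal{O}$, yielding exponentially many Floer generators in $\HF(L_0,\psi^{-k}(L_1))$. To upgrade this to a lower bound on the number of $\epsilon$-long bars, I would exploit that in dimension two the relative Floer action of two intersection points is the symplectic area enclosed between their connecting arcs, and two intersections whose connecting arcs realise distinct braid-homotopy classes relative to $\mathcal{O}$ must enclose a region of area at least some $\epsilon_0>0$ determined by $\mathcal{O}$ and $(L_0,L_1)$. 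This stratifies the Floer generators into exponentially many braid classes with actions in well-separated windows, and an action-filtered counting argument analogous to the proof of Theorem~B in \cite{CGG_barcode} then converts this into exponentially many bars of length at least~$\epsilon_0$.

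The main obstacle is this last step: showing that the braid stratification of intersection points actually survives the Floer differential as exponentially many $\epsilon$-long bars rather than being cancelled pairwise by differentials that cross braid classes. This is a delicate action-filtered analysis tuned to the braid structure of intersections around $\mathcal{O}$ and is the technical heart of the argument; the remaining ingredients---Katok horseshoe approximation, pseudo-Anosov entropy estimates, and the Lagrangian Floer framework of Theorem~\ref{mainthm:higherdim}---are by now well established.
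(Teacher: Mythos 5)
Your overall architecture (Katok horseshoe, a finite invariant set $\mathcal{O}$ of horseshoe orbits, homotopy classes of curves in $M\setminus\mathcal{O}$, an action-separation argument turning intersection points into long bars) matches the paper's, but the step you yourself flag as the ``technical heart'' is a genuine gap, and the mechanism you propose for it does not work. You assert that two intersection points whose connecting arcs realise distinct braid classes rel $\mathcal{O}$ must be action-separated by a uniform $\epsilon_0>0$ determined by $\mathcal{O}$ and $(L_0,L_1)$. What actually has to be bounded below is the \emph{energy of a Floer strip} joining two generators, and a strip whose suspension crosses the link $\mathcal{L}_{\mathcal{O}}$ can a priori do so with energy tending to $0$ as $k\to\infty$; there is no soft, static ``enclosed area'' bound. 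The paper's substitute is the crossing-energy estimate of Proposition~\ref{prop:crossing_energy}, which is genuinely dynamical: it needs the a priori $C^1$-bounds of Lemma~\ref{lem:dubound} and Corollary~\ref{cor:pseudo_2} (low-energy strips are continuous families of pseudo-orbits), the special Markov-rectangle properties (I)--(III) imported from \cite{Meiwes_horseshoes}, and a combinatorial count of how such a pseudo-orbit can meet the pair-of-pants surfaces $F_i$. Moreover, the estimate is established only for strips with one asymptotic in the distinguished classes $\mathcal{Z}^+_1(l_0,l_1)$, which are represented by \emph{unique} chords by Lemma~\ref{lem:choice_of_L}; a uniform separation between \emph{arbitrary} pairs of braid classes, as you claim, is both stronger than what is needed and unsupported.

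Two further structural points. First, starting from Theorem~\ref{mainthm:higherdim} applied to a reference pair $(L_0^*,L_1^*)$ does not help you transfer to arbitrary pairs in the classes $\alpha_i$, because $\hbar(\psi;\cdot,\cdot)$ is not invariant under homotopies of the Lagrangians; the paper's transfer mechanism is the invariance of the link-complement-filtered complexes $\CF^{\rho}$ under Hamiltonian isotopies supported in $M\setminus\mathcal{O}$, together with an explicit (bigon-cancelling, exactness-corrected) isotopy of any curve in class $\alpha_i$ to an ``adapted'' one. Your pseudo-Anosov/geometric-intersection count only produces exponentially many \emph{generators}, which by themselves can cancel in pairs under the differential. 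Second, even granting the crossing-energy bound, converting the resulting $\epsilon$-isolated subcomplexes into a count of bars requires the algebraic Proposition~\ref{prop:algebaric_barcode}, a higher-dimensional generalization of Proposition~3.8 of \cite{CGG_barcode}, which your sketch also leaves implicit. So the proposal is a reasonable outline of the right strategy, but the two genuinely new ingredients of the paper --- the crossing-energy lemma for strips meeting $\mathcal{L}_{\mathcal{O}}$ and the link-filtered Floer complex with its isolation/invariance properties --- are exactly what is missing.
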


\subsection{Stability with respect to the Hofer- and the $\gamma$-distance}\label{sec:intro:stability} %and a variant of (relative) barcode entropy}
Two fundamental and remarkable norms defined on groups of Hamiltonian diffeomorphisms are the Hofer-norm $\|\cdot\|_{\hofer}$ and the spectral- (or $\gamma$-)norm~$\gamma(\cdot)$. The former was first introduced by Hofer, \cite{Hofer90}, whereas the latter was introduced by Viterbo, \cite{Viterbo92}, by means of generating functions, and by Schwarz, see \cite{Schwarz2000}, and Oh, see \cite{Oh05}, using Floer theory. The Hofer-norm bounds the $\gamma$-norm from above, whereas the $C^k$ norms are not bounded by those norms in general.  We refer to \S \ref{sec:relbarent} and to the above references for definitions of these norms. They naturally induce distances on the groups of Hamiltonian diffeomorphisms, and furthermore also on any given Hamiltonian isotopy class $\mathcal{L}$ of Lagrangian submanifolds: 
the \textit{Hofer-distance} and (exterior) \textit{$\gamma$-distance} on $\mathcal{L}$ is defined as 
 $$d_{\hofer}(L,L') :=\inf\{\|\psi\|_{\hofer}\,|\, \psi \in \Ham(M,\omega), \psi(L) = L'\}, $$
$$d_{\gamma}(L,L') : = \inf \{\gamma(\psi)\, |\, \psi \in \Ham(M,\omega), \psi(L) = L'\},$$
see also  \cite{Kislev-Shelukhin} and references therein.

A remarkable property of the relative barcode entropy is that 
$\hbar(\psi;L_0,L_1) $ is lower semi-continuous with respect to $d_{\hofer}$ and $d_{\gamma}$  in the pair $(L_0,L_1)$ varying in their Hamiltonian isotopy class. 
Together with these properties and estimate \eqref{bar_ent},  Theorem~\ref{mainthm:higherdim}
has the following consequence. 
\setcounter{maincor}{3}
\begin{maincor}\label{cor:Gamma_stable}
Let $\psi$ be a Hamiltonian diffeomorphism on a closed symplectic manifold $(M,\omega)$,  and let  $\mathcal{L}$ be a Hamiltonian isotopy class of monotone Lagrangian submanifolds in $M$. Assume that $L_0$ in $\mathcal{L}$ 
 contains a local unstable manifold of a point in a locally maximal compact hyperbolic set $K$ with $h_{\topo}(\psi|_K)>0$. Then, there exists a $d_{\gamma}$-open (and hence also $d_{\hofer}$-open) neighbourhood $\mathcal{U}$ of $L_0$ in $\mathcal{L}$ such that for all $L \subset \mathcal{U}$, 
$$\Gamma(\psi;L)>0.$$
\end{maincor}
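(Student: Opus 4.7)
The plan is to combine Theorem~\ref{mainthm:higherdim} with the $d_{\gamma}$-lower semi-continuity of relative barcode entropy and the inequality $\hbar(\psi;L_0,L_1) \leq \Gamma(\psi;L_0)$ from \eqref{bar_ent}. The key point is that once one can make relative barcode entropy positive for the pair $(L_0, L_1)$ with some judiciously chosen $L_1 \in \mathcal{L}$, stability automatically propagates positivity of volume growth to a whole $d_\gamma$-neighbourhood of $L_0$.

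First I would produce a partner Lagrangian $L_1 \in \mathcal{L}$ that together with $L_0$ meets the hypotheses of Theorem~\ref{mainthm:higherdim}. By assumption, $L_0$ already contains a local unstable disk $W^u_\delta(q) \subset L_0$ for some $q \in K$. Using the remark following Theorem~\ref{mainthm:higherdim} (that any Lagrangian can be modified inside a small set to contain a local stable manifold of a point in $K$), I apply a compactly supported Hamiltonian isotopy to $L_0$, disjoint from a neighbourhood of $W^u_\delta(q)$, so that the resulting Lagrangian $L_1$ lies in the same Hamiltonian isotopy class $\mathcal{L}$, still contains $W^u_\delta(q)$, and additionally contains $W^s_{\delta'}(p)$ for some $p \in K$, $\delta' > 0$. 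The pair $(L_0, L_1)$ is then admissible, and Theorem~\ref{mainthm:higherdim} gives
\[
\hbar(\psi;L_0,L_1) \;\geq\; h_{\topo}(\psi|_K) \;>\; 0.
\]

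Next I invoke the lower semi-continuity property recalled just above the statement of the corollary: as a function of the pair varying in $\mathcal{L} \times \mathcal{L}$, the relative barcode entropy $(L, L') \mapsto \hbar(\psi;L,L')$ is lower semi-continuous with respect to $d_{\gamma}$. Freezing the second slot at $L_1$ and taking a threshold $0 < c < h_{\topo}(\psi|_K)$ yields a $d_\gamma$-open neighbourhood $\mathcal{U}$ of $L_0$ in $\mathcal{L}$ such that $\hbar(\psi; L, L_1) > c$ for every $L \in \mathcal{U}$. Finally, the inequality $\hbar(\psi;L,L_1) \leq \Gamma(\psi;L)$ from \eqref{bar_ent} gives $\Gamma(\psi;L) > c > 0$ for every $L \in \mathcal{U}$, which is the claim. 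Since $d_\gamma \leq d_{\hofer}$ (up to the usual conventions on these norms), $\mathcal{U}$ is also $d_{\hofer}$-open.

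The only non-routine step is the first one: ensuring that $L_1$ can be chosen inside $\mathcal{L}$ while simultaneously placing a local stable disk into it. This hinges on the locality of the modification asserted in the remark after Theorem~\ref{mainthm:higherdim}, which is why picking the support of the Hamiltonian isotopy away from $W^u_\delta(q)$ is essential; everything else reduces to applying Theorem~\ref{mainthm:higherdim}, lower semi-continuity, and \eqref{bar_ent} in sequence.
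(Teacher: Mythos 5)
Your argument is correct and is exactly the chain the paper intends (it gives no separate proof of this corollary, only the sentence "Together with these properties and estimate \eqref{bar_ent}, Theorem~\ref{mainthm:higherdim} has the following consequence"): produce $L_1\in\mathcal{L}$ containing a local stable disk via the local modification from the remark after Theorem~\ref{mainthm:higherdim}, apply the theorem to get $\hbar(\psi;L_0,L_1)>0$, then use $d_\gamma$-lower semi-continuity in the first slot and $\hbar(\psi;L,L_1)\le\Gamma(\psi;L)$. The only point worth a word more than you give it is that the grafting of $W^s_{\delta'}(p)$ into $L_0$ must be realized by an \emph{ambient Hamiltonian} isotopy so that $L_1$ stays in $\mathcal{L}$ (a local exact Lagrangian isotopy supported in a Darboux ball does this), which is precisely the unproved content of the paper's remark, so you are on equal footing with the paper.
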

\begin{rem}
The statement is of rather symplectic nature and it is  not clear if it can be obtained without using methods from Floer theory or related methods. 
To illustrate that, we might just consider Lagrangians $L$ which are graphs of exact $1$-forms $df$  under the identification of a neighbourhood of $L_0$ with the unit co-disk bundle of $L_0$: In that case, $d_{\hofer}(L,L_0)$ and $d_{\gamma}(L, L_0)$ are bounded from above by $\max f - \min f$, a quantity that does not control large oscillations of $L$ around $L_0$. 
In particular the $C^0$-, and the Hausdorff-distance of Lagrangians is not  $d_{\gamma}$-continuous. 
Moreover, it is not hard to see that, given $K$ as above, every closed Lagrangian submanifold in $\mathcal{L}$ can be perturbed in the Hofer distance to a Lagrangian submanifold $L_0$ as in the corollary. It follows that there is a $d_{\hofer}$-open and dense set of $L$ in $\mathcal{L}$ with $\Gamma(\psi;L)>0$.   \end{rem}
\begin{rem}
In \cite{CGG_volume} it was shown that, under some assumptions, the volume of Lagrangian submanifolds is lower semi-continuous with respect to those distances. It would be interesting to know if this is also the case for $\Gamma(\psi;L)$, at least in some situations. 
\end{rem}

The next statement is a corollary of Theorem \ref{thm:barcode_lagr}. For that, the reader should keep in mind that a small $d_{\gamma}$-perturbation of a closed curve in a surface does not necessarily preserve its homotopy class in a complement of a fixed finite set of points. %The following corollary of Theorem \ref{thm:barcode_lagr} therefore asserts that in the situation of Hamiltonian diffeomorphisms, the condition on the curves to have exponential length growth by the result of Franks and Handel mentioned above can be considerably relaxed.      
\begin{maincor}\label{cor:Gamma_stable2}
Let $(M, \omega)$ be a closed symplectic surface, $\psi:M\to M$ a Hamiltonian diffeomorphism with $h_{\topo}(\psi)>e>0$, and $\mathcal{L}$ a Hamiltonian isotopy class of monotone curves in $M$. Then there exist a finite invariant set~$\mathcal{O}$, a homotopy class $\alpha$ of closed curves in $M\setminus \mathcal{O}$,  and a $d_{\gamma}$-open  (and hence also $d_{\hofer}$-open)  neighbourhood $\mathcal{U}$ of the set of $L$ in $\mathcal{L}$ with $[L] =\alpha$ such that for all $L'\in \mathcal{U}$, 
$$\Gamma(\psi;L')>h_{\topo}(\psi)-e.$$
\end{maincor}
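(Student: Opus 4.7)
The plan is to deduce the statement by feeding the output of Theorem~\ref{thm:barcode_lagr} into the lower semi-continuity of $\hbar$ and the inequality $\hbar(\psi;L_0,L_1) \leq \Gamma(\psi;L_0)$ from \eqref{bar_ent}. First, I would apply Theorem~\ref{thm:barcode_lagr} to $\psi$ and $e$ to obtain a finite invariant set $\mathcal{O} \subset M$ and homotopy classes $\alpha_0, \alpha_1$ of closed curves in $M \setminus \mathcal{O}$ such that every admissible pair $(L_0, L_1)$ with $[L_i] = \alpha_i$ satisfies $\hbar(\psi;L_0,L_1) > h_{\topo}(\psi) - e$. Since admissible pairs consist of Hamiltonian-isotopic curves, $\alpha_0$ and $\alpha_1$ project to the same free homotopy class in $\pi_1(M)$. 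Exploiting the freedom in the choice of $\mathcal{O}$ produced by the proof of Theorem~\ref{thm:barcode_lagr} (for instance, perturbing the points of $\mathcal{O}$ into general position with respect to representatives of $\mathcal{L}$), I would arrange that $\alpha_0$ and $\alpha_1$ are realized by curves in $\mathcal{L}$, and set $\alpha := \alpha_1$.

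Next, fix once and for all a reference Lagrangian $L_0 \in \mathcal{L}$ with $[L_0] = \alpha_0$. For every $L_1 \in \mathcal{L}$ with $[L_1] = \alpha$, the pair $(L_0, L_1)$ is admissible and carries the right homotopy classes, so Theorem~\ref{thm:barcode_lagr} yields $\hbar(\psi;L_0,L_1) > h_{\topo}(\psi) - e$. The lower semi-continuity of $\hbar(\psi;L_0,\cdot)$ on $\mathcal{L}$ with respect to $d_\gamma$ then provides, for each such $L_1$, a $d_\gamma$-open neighborhood $V_{L_1}$ in $\mathcal{L}$ on which the same bound persists. Set
$$\mathcal{U} := \bigcup_{L_1 \in \mathcal{L},\ [L_1] = \alpha} V_{L_1}.$$
This is $d_\gamma$-open and is a neighborhood of $\{L \in \mathcal{L} \mid [L] = \alpha\}$. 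For any $L' \in \mathcal{U}$ the estimate \eqref{bar_ent} gives $\Gamma(\psi;L') \geq \hbar(\psi;L_0,L') > h_{\topo}(\psi) - e$, as desired. In accordance with the preceding remark, curves $L' \in \mathcal{U}$ obtained from $L_1$ by a small $d_\gamma$-perturbation need not themselves carry the homotopy class $\alpha$ in $M \setminus \mathcal{O}$, which is precisely what makes the corollary non-trivial.

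The main obstacle I foresee is the second part of the first step: ensuring that the classes $\alpha_0, \alpha_1$ produced by Theorem~\ref{thm:barcode_lagr} admit representatives within the prescribed class $\mathcal{L}$. The set $\mathcal{O}$ and the classes $\alpha_i$ are dictated by the horseshoe and braid-theoretic input of Theorem~\ref{thm:barcode_lagr}, and a priori they need not be compatible with $\mathcal{L}$; the argument must therefore either extract enough flexibility from the proof of Theorem~\ref{thm:barcode_lagr} to choose $\mathcal{O}$ compatibly with a reference curve in $\mathcal{L}$, or enlarge $\mathcal{O}$ in a controlled way to correct the discrepancy, both of which should be available since on a surface Hamiltonian isotopies act very flexibly on closed curves. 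Once the matching is in place, the rest is a mechanical application of lower semi-continuity together with \eqref{bar_ent}.
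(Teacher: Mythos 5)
Your overall route is the intended one: feed Theorem~\ref{thm:barcode_lagr} into the lower semi-continuity of $\hbar$ with respect to $d_\gamma$ and then into \eqref{bar_ent}, after first arranging (using the flexibility in the choice of adapted curves in the proof of Theorem~\ref{thm:barcode_lagr}) that the classes $\alpha_0,\alpha_1$ are realized by curves in the prescribed class $\mathcal{L}$. The obstacle you single out at the end is real and your sketch for resolving it is the right one.

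However, your final step misapplies \eqref{bar_ent}. That inequality reads $\hbar(\psi;L_0,L_1)\leq\Gamma(\psi;L_0)$: it bounds the volume growth of the \emph{first} Lagrangian. You fix $L_0$ and perturb the second slot, and then write $\Gamma(\psi;L')\geq\hbar(\psi;L_0,L')$, which does not follow; what you actually get is a bound on $\Gamma(\psi;L_0)$ for your one fixed reference curve. (The best one can extract in the second slot is, via \eqref{symmetric}, $\hbar(\psi;L_0,L')=\hbar(\psi^{-1};L',L_0)\leq\Gamma(\psi^{-1};L')$, i.e.\ the \emph{backward} volume growth of $L'$, which is not the quantity in the corollary.) The repair is immediate: take $\alpha:=\alpha_0$, fix one reference curve $L_1\in\mathcal{L}$ with $[L_1]=\alpha_1$, and for each $L_0\in\mathcal{L}$ with $[L_0]=\alpha_0$ use lower semi-continuity of $\hbar(\psi;\cdot,L_1)$ in the first argument (which is exactly what \eqref{b_epsilon1} provides) to get a $d_\gamma$-neighbourhood $V_{L_0}$ on which $\hbar(\psi;L',L_1)>h_{\topo}(\psi)-e$; then \eqref{bar_ent} applied with $L'$ in the first slot gives $\Gamma(\psi;L')\geq\hbar(\psi;L',L_1)>h_{\topo}(\psi)-e$ on $\mathcal{U}:=\bigcup V_{L_0}$. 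With the slots swapped in this way the argument closes.
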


\subsection{Strong barcode entropy and some further examples}\label{sec:intro_strong}
Finally, let me discuss some situations in which positive relative barcode entropy persists on a larger scale. 
We note that the positivity of barcode entropy for the situations  considered so far had its origin in horseshoes, hence in some rather local dynamical features, and the property "$\hbar(\psi;L_0,L_1)>0$"  is not invariant under Hamiltonian isotopies of the Lagrangian submanifolds in general. 
Also note, that while $\hbar(\psi;L_0,L_1)$ is lower semi-continuous 
 with respect to the Hofer- or the $
\gamma$-distance in the pair $(L_0,L_1)$, questions about  stability properties in the diffeomorphism $\psi$ are more subtle. (One result in that direction, but which is obtained rather indirectly, is formulated in Remark~\ref{rem:lsc}.) In order to detect situations with  stronger stability properties, it is convenient to consider a variant of relative barcode entropy. For $R>0$, let us define the  \textit{$R$-strong relative barcode entropy} of  $(\psi,L_0,L_1)$ as  $$\mathsf{H}^R(\psi;L_0,L_1) = \lim_{\substack{\hat{R}\to R\\ \hat{R}<R}}\limsup_{k \to \infty} \frac{\log(b_{\hat{R}k}(\psi;L_0,L_1, k))}{k}.$$
The following properties are either obvious or consequences of stability properties of barcodes with respect to $d_{\gamma}$,  
\cite{Kislev-Shelukhin}, see \S\ref{sec:relbarent}; strict monotonicity in $R$ (point (iii)) was pointed out to me by \c{C}ineli.
\begin{prop}\label{prop:strong_prop} Let $\psi \in \Ham(M,\omega)$, and $(L_0,L_1)$ be an admissible pair. Then the following holds: 
\begin{enumerate}[(i)]
\item If $(L'_0,L'_1)$ is admissible and $L'_i$ is Hamiltonian isotopic to $L_i$, $i=0,1$, then for all $R>0$, $$\mathsf{H}^R(\psi;L'_0,L'_1) = \mathsf{H}^{R}(\psi;L_0,L_1).$$
\item If $\gamma(\psi)\leq R$, then $H^R(\psi;L_0,L_1) = 0$. 
\item $H^R(\psi;L_0,L_1)$, if non-vanishing,  is strictly monotone decreasing in $R$. More precisely: If $\gamma(\psi) >R>{R'}>0$, then $$\mathsf{H}^{{R'}}(\psi;L_0,L_1) \geq \frac{\gamma(\psi) - {R'}}{\gamma(\psi)-R}\mathsf{H}^R(\psi;L_0,L_1);$$ in particular, $$\hbar(\psi;L_0,L_1)\geq \frac{\gamma(\psi)}{\gamma(\psi)-R}\mathsf{H}^R(\psi;L_0,L_1).$$ 
    \item If $d_{\gamma}(\psi,\psi')< \delta$, then for all $R>\delta$, 
    $$\mathsf{H}^{R-\delta}(\psi';L_0,L_1) \geq \mathsf{H}^R(\psi;L_0,L_1).$$
        \item $\mathsf{H}^{nR}(\psi^{n};L_0,L_1) \geq n\mathsf{H}^R(\psi;L_0,L_1)$, for all $n\in \N$.
    \end{enumerate} 
\end{prop}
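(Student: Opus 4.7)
The plan is to handle the five assertions separately, with all of them leaning on the stability of Lagrangian Floer barcodes with respect to the spectral ($\gamma$-)norm, as developed in \cite{Kislev-Shelukhin}. I would address them in the order (i), (iv), (v), (ii), (iii): parts (i) and (iv) are immediate from stability, (v) is a reindexing identity, (ii) uses the boundary-depth estimate, and (iii) combines the previous four.

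For (i), let $\phi_i$ be Hamiltonian diffeomorphisms with $L_i' = \phi_i(L_i)$. Conjugating by $\phi_0^{-1}$ preserves the bar lengths of the Lagrangian Floer barcode and identifies $(L_0', \psi^{-k}(L_1'))$ with $(L_0, \phi_0^{-1}\psi^{-k}\phi_1(L_1))$; by conjugation invariance of $\gamma$, the second Lagrangian differs from $\psi^{-k}(L_1)$ by a Hamiltonian of $\gamma$-norm at most $\gamma(\phi_0)+\gamma(\phi_1)$, a constant independent of $k$. Stability then yields a uniform interleaving, so the counts $b_{\hat R k}$ for the two pairs agree up to an $O(1)$ shift of the threshold that is absorbed in $\limsup_k \log(\cdot)/k$ after letting $\hat R \to R^-$. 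Part (iv) is analogous but with the perturbation now in $\psi$: telescoping together with conjugation invariance gives $d_\gamma(\psi^k, (\psi')^k) \leq k\delta$, so the barcodes of $(L_0, \psi^{-k}(L_1))$ and $(L_0, (\psi')^{-k}(L_1))$ are $k\delta$-interleaved, whence a bar of length $\geq \hat R k$ in the $\psi$-barcode is matched to a bar of length $\geq (\hat R-\delta)k$ in the $\psi'$-barcode, and (iv) follows.

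For (v), the tautology $(\psi^n)^{-k}(L_1) = \psi^{-nk}(L_1)$ identifies $b_\epsilon(\psi^n; L_0, L_1; k) = b_\epsilon(\psi; L_0, L_1; nk)$; the re-indexing $K = nk$, $\tilde R = \hat R/n$ expresses $\mathsf{H}^{nR}(\psi^n)/n$ as a variant of $\mathsf{H}^R(\psi)$ taken along the subsequence $K \in n\N$, and (v) reduces to a standard comparison of $\limsup$-quantities. For (ii), the key input is the boundary-depth bound $\beta\bigl(CF(L_0, \psi^{-k}(L_1))\bigr) \leq \gamma(\psi^{-k}\phi) \leq k\gamma(\psi) + \gamma(\phi) \leq kR + C$, where $\phi$ is any Hamiltonian with $L_1 = \phi(L_0)$; this caps bar lengths linearly by $kR$, so bars of length $\geq \hat R k$ with $\hat R < R$ sit in a narrow window of width $(R - \hat R)k + O(1)$. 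The stability of the barcode against the ``diagonal'' barcode of $(L_0, L_0)$ (whose bar count is bounded by the topological constant $N_0 = \dim \Ho_\ast(L_0;\Z/2)$) then constrains the number of bars in such a narrow window to be bounded independently of $k$, giving subexponential growth and $\mathsf{H}^R(\psi; L_0, L_1) = 0$ after taking $\hat R \to R^-$.

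Part (iii) is the main obstacle and requires the most delicate argument. For $R' < R < \gamma(\psi)$ my plan is to lift to $\psi^n$ via (v) (where $\gamma(\psi^n) \leq n\gamma(\psi)$), apply (iv) with a Hamiltonian perturbation $\chi_n$ whose $\gamma$-norm is tuned so that the corresponding interleaving moves the threshold from $nRk$ down to $nR'k$ in the perturbed barcode, and finally pass back to $\psi$ by re-applying (v). The ratio $(\gamma(\psi) - R')/(\gamma(\psi) - R)$ in the final inequality comes from the fact that the ``$\gamma$-slack'' $\gamma(\psi) - R$ (as bounded by (ii)) controls how far the threshold may be lowered before the boundary-depth wall is hit, and this slack compounds linearly under iteration. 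The principal difficulty will be the sharp quantitative bookkeeping of this shift as $n$ grows, so that the interleaving factor saturates exactly the claimed constant; this requires combining the sub-additivity $\gamma(\psi^n) \leq n\gamma(\psi)$ with the existence of a ``cancelling'' perturbation near $\psi^n$ whose $\gamma$-norm is as small as possible.
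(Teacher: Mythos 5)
Your parts (i) and (iv) follow the paper's route (stability of the bar counts under $d_\gamma$, with a loss that is $O(1)$ in $k$ for (i) and of slope $\delta$ for (iv), absorbed after $\hat{R}\to R^-$), and are fine. In (v), though, note that the ``standard comparison of limsup-quantities'' goes the wrong way: after reindexing, $\mathsf{H}^{nR}(\psi^n)$ is $n$ times a limsup along the subsequence $K\in n\N$, which is a priori \emph{smaller} than the full limsup defining $n\mathsf{H}^R(\psi)$; you still need a bridging step, e.g.\ comparing $\psi^K$ with $\psi^{n\lceil K/n\rceil}$ via \eqref{b_epsilon2} at the cost of the constant $n\gamma(\psi)$, which is harmless but not purely formal.

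The genuine gaps are in (ii) and (iii). In (ii), the claim that stability against the fixed ``diagonal'' barcode bounds, independently of $k$, the number of bars with length in the window $[\hat{R}k,\,Rk+C]$ is false: the interleaving distance to that fixed barcode is itself of order $k\gamma(\psi)+O(1)$, so matching only controls bars whose length exceeds (roughly) $k\gamma(\psi)$, while bars of length between $\hat{R}k$ and $k\gamma(\psi)$ can proliferate exponentially --- producing exponentially many bars of length $\hat{R}k$ with $\hat{R}$ only slightly below the per-iterate $\gamma$-growth is exactly what Theorem~\ref{mainthm:strong} does. Hence your argument only covers $\hat{R}>\gamma(\psi)$, i.e.\ the case $\gamma(\psi)<R$; the borderline case $\gamma(\psi)=R$, which is where (ii) has content, is in the paper deduced from the quantitative inequality (iii) together with monotonicity of $\mathsf{H}^R$ in $R$ and finiteness of $\hbar$ (via \cite[Thm.~A]{CGG_barcode}). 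For (iii) you do not give an actual mechanism: perturbing $\psi^n$ by some $\chi_n$ and ``passing back by (v)'' cannot work as stated, since (v) only goes from $\psi$ up to $\psi^n$, and no perturbation is identified that moves the threshold from $nRk$ to $nR'k$ --- you acknowledge the quantitative bookkeeping as an open difficulty, and that bookkeeping is precisely the proof. The paper's argument for (iii) is different and elementary: given a sequence $k_i$ realizing $\mathsf{H}^R(\psi;L_0,L_1)$, set $k'_i:=\lceil\frac{\gamma(\psi)-R}{\gamma(\psi)-R'}k_i\rceil\le k_i$ and compare the complexes of $\psi^{k_i}$ and $\psi^{k'_i}$ using $d_\gamma(\psi^{k_i},\psi^{k'_i})\le(k_i-k'_i)\gamma(\psi)$ and \eqref{b_epsilon2}: each of the $b_{Rk_i}(\psi;L_0,L_1,k_i)$ bars survives in the $k'_i$-th complex with length at least $Rk_i-(k_i-k'_i)\gamma(\psi)\ge R'k'_i$ (up to the constant $Z$), and since $k_i/k'_i\to\frac{\gamma(\psi)-R'}{\gamma(\psi)-R}$, the per-iterate exponential rate is amplified by exactly that factor. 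This ``shorten the iterate along powers of $\psi$ itself'' idea is what is missing from your sketch, and once you have it, (ii) follows as in the paper.
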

%The strict monotonicity in $R$ (point (iii))  
Hence, given a Hamiltonian isotopy class of Lagrangian submanifolds $\mathcal{L}$, the strong barcode entropies $\mathsf{H}^R(\psi;\mathcal{L}):=\mathsf{H}^R(\psi;L,L)$, $L \subset \mathcal{L}$, are well defined, and the relative barcode entropy is positive independent of the pair of Lagrangians in $\mathcal{L}$ if  $\mathsf{H}^R(\psi;\mathcal{L})>0$. A necessary condition for the latter is that $\Ham(M,\omega)$ has infinite $d_{\gamma}$-diameter, which for example does not hold for the standard $\CP^n$,  \cite{Entov-Polterovich_Calabi}. 
To exhibit first examples with   $\mathsf{H}^R(\psi;\mathcal{L})>0$, 
we consider the twisted products $(M, \omega)=(\Sigma \times \Sigma,\sigma \oplus -\sigma)$, where $(\Sigma,\sigma)$ is a closed symplectic surface with genus at least $2$. 
That the $d_{\gamma}$-diameter of $\Ham(M,\omega)$ is infinite follows from \cite{Usher_boundary_depth} and \cite{Kislev-Shelukhin}.\footnote{More generally, the $d_{\gamma}$-diameter of $\Ham(M,\omega)$ is infinite whenever $(\Sigma, \sigma)$ is a  symplectic aspherical manifold, as was recently established by Mailhot in \cite{mailhot}, and that result provided some motivation for the choice of examples here.}  
\setcounter{mainthm}{5}
\begin{mainthm}\label{mainthm:strong}
Let $I_1, I_2:\Sigma \to \Sigma$ be two commuting anti-symplectic involutions ($I_j^*\sigma = -\sigma$, $I^2_j=\id$, $j=1,2$) such that the complement of $\Fix(I_1)$ consists of two (homeomorphic) components,  with 
 positive genus,  and such that the fixed point set $\Fix(f)$ of  $f:=I_2 \circ I_1$ is non-empty and finite. Let $\mathcal{L}$ be the Hamiltonian isotopy class of the Lagrangian submanifold $L:= \{(x,f(x))\} \subset M$. 
 Then there exists $\psi\in \Ham(M,\omega)$  such that for some $R,E>0$, \begin{align}\label{RE}
     \mathsf{H}^{R}(\psi;\mathcal{L}) \geq E.
     \end{align}
\end{mainthm}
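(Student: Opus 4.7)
The plan is to reduce the statement, via the anti-symplectic involution $\iota:=I_1\times I_2$ of $(M,\omega)$, to producing a single Hamiltonian $\tilde\phi\in\Ham(\Sigma,\sigma)$ whose Hamiltonian Floer complexes $\CF(\tilde\phi^{\,k})$ carry exponentially many bars of length at least $Rk$, and then to build such a $\tilde\phi$ by superimposing a Smale horseshoe on an autonomous Hamiltonian produced by Usher's method \cite{Usher_boundary_depth}.

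To set up the reduction, observe that $\iota$ is anti-symplectic (since $I_{j}^{*}\sigma=-\sigma$ for $j=1,2$) and that the relation $I_2\circ f=I_1$ gives $\iota(x,f(x))=(I_1 x, I_1 x)\in\Delta$, so $\iota(L)=\Delta$. Given any $\phi\in\Ham(\Sigma,\sigma)$ set $\tilde\phi:=I_1\circ\phi\circ I_1\in\Ham(\Sigma,\sigma)$ (generated by $-H_t\circ I_1$ if $\phi$ is generated by $H_t$), and take $\psi:=\phi\times\id\in\Ham(M,\omega)$. A direct computation gives $\iota\circ\psi^{-k}\circ\iota^{-1}=\tilde\phi^{-k}\times\id$, so pushforward by $\iota$ yields an isomorphism of filtered Floer complexes
$$\CF\bigl(L,\psi^{-k}(L)\bigr)\;\cong\;\CF\bigl(\Delta,(\tilde\phi^{-k}\times\id)(\Delta)\bigr)\;\cong\;\CF(\tilde\phi^{\,k}),$$
in which the action filtration is reversed (since $\iota^{*}\omega=-\omega$). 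As bar lengths are invariant under sign reversal of the filtration, $\mathsf{H}^R(\psi;L,L)$ coincides with the absolute strong barcode entropy of $\tilde\phi$ on $\Sigma$.

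To construct $\tilde\phi$ on $\Sigma$, let $\Sigma_{+}$ denote one of the two components of $\Sigma\setminus\Fix(I_1)$, of positive genus. I would take $\tilde\phi=\phi_{F+G}^{\,1}$ for autonomous Hamiltonians $F,G:\Sigma\to\R$ with disjoint supports, chosen as follows. The function $G$ is supported in a small disk $D\subset\Sigma_{+}$ so that $\phi_{G}$ realises a Smale horseshoe $K\subset D$ with $h_{\topo}(\phi_{G}|_{K})=h>0$ and with $\gamma(\phi_{G})$ arbitrarily small. The function $F$ is supported in $\Sigma\setminus D$, satisfies $F\circ I_1=-F$, and is produced by Usher's construction using the positive-genus components $\Sigma_{\pm}$, so that $\gamma(\phi_{F}^{\,k})\geq ck$ for some $c>0$ and all $k$. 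Since $\supp F\cap \supp G=\emptyset$, the flows commute and $\tilde\phi^{\,k}=\phi_{F}^{\,k}\circ\phi_{G}^{\,k}$; the horseshoe persists in $\tilde\phi$, contributing at least $e^{hk}$ generators to $\CF(\tilde\phi^{\,k})$, while $\gamma(\tilde\phi^{\,k})\geq ck-k\gamma(\phi_{G})\gtrsim k$.

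The remaining and most delicate step is to upgrade ``$\geq e^{hk}$ generators'' and ``$\gamma(\tilde\phi^{\,k})\gtrsim k$'' to ``$\geq e^{Ek}$ bars of length $\geq Rk$'' for suitable $R,E>0$. I would arrange $F$ so that distinct symbolic codings of $K$ correspond to substantially different $F$-integrals along the associated $\tilde\phi^{\,k}$-orbits; then the action spectrum of the $e^{hk}$ horseshoe orbits spreads across an interval of length $\sim k$, and a pigeonhole argument on action windows yields exponentially many long bars. The main obstacle is to prevent the Floer differential from absorbing the exponentially many short-bar candidates into only a few very long bars (which would leave $\gamma(\tilde\phi^{\,k})$ large but $\mathsf{H}^R(\tilde\phi)=0$); the $I_1$-antisymmetry of the construction should play a key role here, by inducing a splitting of $\CF(\tilde\phi^{\,k})$ that restricts the admissible pairings.
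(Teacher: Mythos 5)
Your reduction via $\iota := I_1\times I_2$ is fine as far as it goes: $\iota$ is indeed anti-symplectic, $\iota(L)=\Delta$, and conjugation identifies $\CF(L,\psi^{-k}(L))$ with $\CF(\tilde\phi^{\,k})$ up to reversing the action filtration, which preserves bar lengths. But it converts the theorem into the assertion that some $\tilde\phi\in\Ham(\Sigma,\sigma)$ has exponentially many bars of length $\geq Rk$ in its absolute Floer barcode --- a statement at least as hard as the original, and one your sketch does not establish. Two concrete problems. First, since $\supp F\cap\supp G=\emptyset$ and the horseshoe $K$ lies in $\supp G\subset D$, the function $F$ vanishes identically along every horseshoe orbit; it therefore contributes nothing to their actions and cannot ``spread the action spectrum across an interval of length $\sim k$'' as you intend. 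The actions of the $e^{hk}$ horseshoe generators are governed by $G$ alone, and there is no reason for them to separate linearly in $k$. Second, and more fundamentally, even a widely spread action spectrum does not produce long bars: the differential is free to pair generators of nearby action, leaving only short bars together with a large $\gamma(\tilde\phi^{\,k})$. You name this obstacle yourself, but the proposed remedy (``the $I_1$-antisymmetry should induce a splitting'') is not an argument --- an anti-symplectic symmetry of the Hamiltonian does not constrain the $\Z_2$ Floer differential in the required way, and your construction is not even $I_1$-equivariant, since $G$ is supported in only one component $\Sigma_{+}$. This missing step is exactly the content of the theorem.

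For contrast, the paper's proof avoids horseshoes entirely. It takes $H$ to be a function of the radial coordinate on a Weinstein neighbourhood of the diagonal $\Delta$, so that $\psi=\phi_H^1$ acts on each sphere-bundle leaf as the time-$T(r)$ map of the geodesic flow of a hyperbolic metric on $\Sigma$. For each class $\mathfrak{a}$ in an exponentially growing set $\mathfrak{A}\subset\pi_1(\Sigma,\hat x)$ (classes representable in $\overline{\Sigma}'$ and not multiples of the fixed circle $S$), the chords of $\psi^n$ from $L$ to $L$ in the class $\mathfrak{c}_{\mathfrak{a}}$ are exactly two, and a direct computation shows their action difference exceeds $nC-T_{\mathfrak{a}}$. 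The topological Lemma \ref{lem:topological} (exploiting that $f=I_2\circ I_1$ interchanges $\Sigma'$ and $\Sigma''$) guarantees both this uniqueness and that Floer strips cannot connect chords in distinct classes, so each class is forced to contribute one bar of length $>nC-T$; exponential growth of $\#\{\mathfrak{a}\,|\,T_{\mathfrak{a}}\le T\}$ then gives $b_{Rn}\ge e^{\mu Rn}$ with $R=C/2$. It is precisely this mechanism --- two chords per class, a linearly growing action gap, and topological isolation of the classes --- that replaces the step your proposal leaves open.
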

A pair of involutions as in the theorem exist for any closed symplectic surface considered above. Note also that with Proposition \ref{prop:strong_prop} (v), by passing to iterates of $\psi$, one finds, for all positive $R$ and $E$, examples where \eqref{RE} holds.   
As a direct consequence of Proposition~\ref{prop:strong_prop} (iv) and of \cite[Thm.~A]{CGG_barcode}\footnote{In our situation, we will know more about the 
 "homotopy classes" of intersections that generate barcode entropy. As a consequence, lower bounds on $h_{\topo}$ can be obtained in a more elementary way, e.g.\ without the use of Yomdin's theorem, cf.\ \cite{Alves-thesis}.}  we obtain. 
\setcounter{maincor}{6}
\begin{maincor}\label{cor:strong_stable}
Let $(M, \omega)$ be as above. For all $R,E>0$ the subset 
$\{h_{\topo} \geq E\}$ in $\Ham(M, \omega)$  contains open balls of radius $\geq R$ with respect to $d_{\gamma}$. In particular, the same holds with respect to $d_{\hofer}$. 
\end{maincor}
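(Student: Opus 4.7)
The plan is to combine Theorem~\ref{mainthm:strong} with the multiplicativity-under-iteration and $d_\gamma$-stability properties of strong barcode entropy recorded in Proposition~\ref{prop:strong_prop}, and to finish by invoking the standard inequality $h_{\topo} \geq \hbar$. No new symplectic or dynamical input beyond the statements in the excerpt is needed.

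First I would apply Theorem~\ref{mainthm:strong} to obtain a $\psi_0 \in \Ham(M,\omega)$ together with constants $R_0, E_0 > 0$ such that $\mathsf{H}^{R_0}(\psi_0;\mathcal{L}) \geq E_0$. By Proposition~\ref{prop:strong_prop}~(v) the iterate satisfies $\mathsf{H}^{nR_0}(\psi_0^n;\mathcal{L}) \geq nE_0$, so given targets $R, E > 0$ I choose $n$ large enough that both $nR_0 > R$ and $nE_0 \geq E$, and set $\psi := \psi_0^n$. This yields a single point $\psi$ at which the strong barcode entropy exceeds $E$ at a scale exceeding $R$.

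To promote this single example to an open ball I fix any $\psi' \in \Ham(M,\omega)$ with $d_\gamma(\psi,\psi') < R$, choose some $\delta \in (d_\gamma(\psi,\psi'), R)$, and apply Proposition~\ref{prop:strong_prop}~(iv) to get
\[
\mathsf{H}^{nR_0 - \delta}(\psi';\mathcal{L}) \;\geq\; \mathsf{H}^{nR_0}(\psi;\mathcal{L}) \;\geq\; E.
\]
For any $L \in \mathcal{L}$, a direct comparison of definitions gives $\mathsf{H}^{R'}(\psi';L,L) \leq \hbar(\psi';L,L)$ for every $R' > 0$, since for fixed $\epsilon > 0$ one has $b_{\hat{R}k}(\cdot,k) \leq b_\epsilon(\cdot,k)$ as soon as $\hat{R}k > \epsilon$, and taking the appropriate $\limsup$s and limits preserves the inequality. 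Consequently $\hbar(\psi';L,L) \geq E$, and by \cite[Thm.~A]{CGG_barcode} (or, as the footnote to the corollary suggests, by a more elementary direct argument available in the present concrete setting) we conclude $h_{\topo}(\psi') \geq E$. Hence the open $d_\gamma$-ball of radius $R$ around $\psi$ is contained in $\{h_{\topo} \geq E\}$; the $d_{\hofer}$-statement is immediate from $d_\gamma \leq d_{\hofer}$.

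All the substantive content is carried by Theorem~\ref{mainthm:strong}, so I do not expect a serious obstacle at the level of the corollary itself: it is essentially a repackaging of the iteration property~(v), the $d_\gamma$-stability property~(iv), and the standard bound $\hbar \leq h_{\topo}$.
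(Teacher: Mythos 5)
Your proof is correct and follows exactly the route the paper intends: Theorem~\ref{mainthm:strong} combined with Proposition~\ref{prop:strong_prop}~(v) to reach arbitrary $R,E$, Proposition~\ref{prop:strong_prop}~(iv) for the $d_\gamma$-ball, the comparison $\mathsf{H}^{R'}\leq\hbar$ (which the paper also records as a special case of Proposition~\ref{prop:strong_prop}~(iii)), and finally \cite[Thm.~A]{CGG_barcode}. No issues.
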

It was previously shown by Chor and the author,  \cite{ChorMeiwes},  that for a closed symplectic surface $(\Sigma, \sigma)$ of genus at least $2$, the assertion of Corollary~\ref{cor:strong_stable} (for the Hofer-distance) holds for  $\Ham(\Sigma,\sigma)$, using rather different methods. Interestingly enough,  and to the best of my knowledge, the following question remains open.  
\begin{qu*}
Do there exist a Hamiltonian isotopy class $\mathcal{L}$ of closed curves in a surface $(\Sigma,\sigma)$ and a Hamiltonian diffeomorphism $\psi$ with  
$\mathsf{H}^R(\psi;\mathcal{L})>0$ for some $R>0$?  
\end{qu*}

\textbf{Idea of proofs.}
As Theorem \ref{mainthm:higherdim} is a relative analogue of Theorem B of  \cite{CGG_barcode}, also the proof is very much a relative analogue of the proof in \cite{CGG_barcode}. In a nutshell, Theorem \ref{mainthm:higherdim} can be established by an interplay of arguments from hyperbolic dynamics together with $C^1$-bounds for pseudo-holomorphic curves with Lagrangian boundary conditions: by results going back to Bowen, the number of orbit segments  from local unstable to local stable manifolds in a locally maximal, topological transitive hyperbolic set $K$ is bounded from below by its topological entropy. Moreover, a combination of $C^1$-estimates for Floer strips and the shadowing theorem shows that 
the energy that is needed to "connect" two chords in $K$ or to "leave" an isolating neighbourhood is uniformly bounded from below. Together, the estimates of Theorem~\ref{mainthm:higherdim} will be obtained.  

The proof of Theorem \ref{thm:barcode_lagr} follows a similar strategy with two additional inputs. First, we apply the fact, recently proved in \cite{Meiwes_horseshoes}, that there exist a specific collection of horseshoe orbits $\mathcal{O}$ and a pair of curves in the complement such that the  growth of chords of the suspension flow connecting those curves and which are unique in their homotopy class in the complement of the link $\mathcal{L}_{\mathcal{O}}$ induced by $\mathcal{O}$, is exponential; moreover, all these chords can be encoded with the help of specific Markov rectangles.  
And second, we use that the Lagrangian Floer homology can be filtered in homotopy classes of chords in the complement of $\mathcal{L}_{\mathcal{O}}$. Together with the estimates already used in the proof of Theorem \ref{mainthm:higherdim}, a "crossing energy lemma" is proved for certain Floer strips crossing $\mathcal{L}_{\mathcal{O}}$, where we have flexibility in the choice of the pair of Lagrangian submanifolds while keeping the diffeomorphism fixed. 

Finally, Theorem \ref{mainthm:strong} is obtained by considering a diffeomorphism $\psi$ supported in the neighbourhood  of the unit sphere bundle of $\Sigma$ embedded in a neighbourhood of the diagonal in $M=\Sigma \times \Sigma$. The support of $\psi$ is foliated by leaves diffeomorphic to that sphere bundle and $\psi$ restricted to a leaf will coincide with the time $T$ map of the geodesic flow for a hyperbolic metric, where $T$ varies between leaves, monotonically increasing first, and decreasing later. The non-trivial chords relative $L$ will come in pairs  with   action difference growing linearly with iterations of $\psi$. The special topological situation then prevents the existence of Floer strips connecting chords between different pairs, as long as one chord belongs to a special subset of pairs. The number of pairs in that subset grows exponentially, which establishes the result.

\textbf{Plan of the paper.}
We start in Section~\ref{sec:preliminaries} with some preliminaries on Lagrangian Floer homology and relative barcode entropy. Section~\ref{sec:Theorem1} contains a lemma about Floer strips with small energy and the proof of Theorem~\ref{mainthm:higherdim}. In Section \ref{sec:Theorem3}, special properties of Lagrangian Floer homology and horseshoes in dimension~2 are discussed, before a proof of Theorem \ref{thm:barcode_lagr} is given. In Section~\ref{sec:strong}, we then construct the examples that establish Theorem~\ref{mainthm:strong}.

\textbf{Acknowledgements.} The author is grateful to Marcelo Alves, Jo\'e Brendel, Erman \c{C}ineli, Viktor Ginzburg, Ba\c{s}ak G\"urel, and Leonid Polterovich for useful discussions. Special thanks go to Urs Fuchs for helpful comments concerning  properties of  Floer strips with small energy.

\section{Preliminaries on relative barcode entropy}\label{sec:preliminaries}
We start with some preliminaries on Hamiltonian  diffeomorphisms and Lagrangian Floer homology. We will work essentially in the same setting as that in \cite{CGG_barcode}. 

\subsection{Hamiltonian diffeomorphisms and mapping cylinder}\label{sec:mappingcyl}
Let $(M,\omega)$ be a symplectic manifold. To keep the exposition simple, we assume that $M$ is closed, but as in \cite{CGG_barcode}, the discussion extends to open $M$ if some assumptions are made on the ends of $(M,\omega)$ and if we consider compactly supported Hamiltonians.  
Let 
  $\psi:M \to M$ be a Hamiltonian diffeomorphism. This means that $\psi = \phi_H^1$ is the time-$1$ map of a (time-dependent) flow $\phi_H^t$ of a $1$-periodic Hamiltonian $H: \R/\Z \times M$, i.e.\ $\frac{d}{dt} \phi^t_H(x) = X_{H_t}(\phi^t_H(x))$ ($(t,x) \in \R \times M)$, where $\omega(X_{H_t}(x),\cdot) = -dH_t(x)$, and $H_t(x) := H(x,t)$. Let 
 $Y:=S^1\times M$, where we identify $S^1 \cong \R /\Z$. 
Fix once and for all a Hamiltonian function $H$ such that $\phi_H^1= \psi$. This defines a lift to the suspension flow $\varphi: \R \times Y \to Y$, defined by 
%$$(t,([r],x)) \mapsto  ([t+r],\phi_H^{t+r} \circ \phi_H^{-r}(x)),$$
%\color{red}
$$(t,([r],x)) \mapsto  ([t+r],\phi_H^{t+r-\lfloor t+r\rfloor}\circ\phi_H^{\lfloor t + r\rfloor}\circ\phi_H^{-r}(x)),$$
\color{black}
the flow of the vector field $X = \partial_t \oplus X_{H_t}$ on $Y$. For an orbit segment $\eta:[0,k] \to M$, $n\in \N$, of the Hamiltonian flow $\phi_H^t$, we obtain an orbit segment $\widehat{\eta}:[0,k] \to Y$, $\widehat{\eta}(t) := (t,\eta(t))$ of $\varphi$.  

If not explicitly stated otherwise, we will consider a fixed  metric $g$ on $M$ and denote by $d$ resp.\ $\overline{d}$ the distance function on $M$ resp.\ $Y=S^1 \times M$ induced by $g$ resp.\ the product metric $g_{\mathrm{Eucl}} \times g$.

\subsection{The chain complex} 
We briefly recall relevant notions for Lagrangian Floer homology. The construction of Lagrangian Floer homology goes back to Floer \cite{Floer_Lagrangian} and in the monotone setting to Oh \cite{Oh}, the setting that is relevant here. %, we refer the reader also to \cite{FO3-I}.
More specifically, we closely follow setup and notation of \cite{Usher_boundary_depth} and \cite{CGG_barcode}.   
In the following, all Lagrangian submanifolds $L$ will be closed and monotone. This means that  there exists $\kappa>0$ such that $\langle \omega, A\rangle = \kappa \langle \mu_L, A\rangle$ for all $A\in \pi_2(M,L)$, where $\mu_L$ is the Maslov class of $L$. We also require that $N_L\geq 2$, where the minimal Chern number $N_L$ is defined as the positive generator of the subgroup in $\Z$ generated by the $\langle \mu_L,A\rangle$ for all $A\in \pi_2(M,L)$ that can be represented by cylinders with boundary in $L$, and $N_L=\infty$ if this subgroup is trivial,  see also \cite{Usher_boundary_depth, CGG_barcode}.
%With these assumptions we are in the setting of \cite{Oh}.
 We also denote  by $\mathfrak{h}(M,L) = \kappa N_L \in (0,+\infty]$ the minimal area of a non-trivial cylinder with boundary in $L$. 

Let $L_0 \subset M$ be a Lagrangian submanifold as above, and let  $L_1= \phi^1_G(L_0)$ be Hamiltonian isotopic to $L_0$, for some Hamiltonian $G$ that we fix from now on. Assume that the pair $(L_0,L_1)$ is non-degenerate; we recall that this means that $L_0$ and $\psi^{-k}(L_1)$ intersect transversally, for all $k\in \N$. 
For given $k\in \N$, the construction of the Floer chain complexes  $\mathcal{C}(L_0,L_1,k) = \CF(L_0,\psi^{-k}(L_1))$ can be sketched as follows. 
Let $\mathcal{P}^k = \mathcal{P}(L_0, \psi^{-k}(L_1))$ be the space of smooth paths $x:[0,1] \to M$ with $x(0) \in L_0$, $x(1) \in \psi^{-k}(L_1)$. Fix a system $\gamma_\mathfrak{c}\in \mathcal{P}^k$ of representatives of the homotopy classes $\mathfrak{c}\in  \pi_1(M;L_0,\psi^{-k}(L_1))$. 
 Denote by $\widetilde{\mathcal{P}}^k$ the set of pairs $(x,[v_x])$, where $x\in \mathcal{P}^k$ and $[v_x]$ is a capping up to a certain equivalence relation. Here a \textit{capping} $v_x$ is a $C^1$-map  $v_x:[0,1]^2 \to M$ with $v(\cdot,0) \in L_0$, $v(\cdot,1) \in \psi^{-k}(L_1)$ and $v(0,t) = \gamma_{\mathfrak{c}}$ and $v(1,t) = x(t)$. Two cappings are equivalent if their $\omega$-area and Maslov index coincide;  the Maslov index is well-defined given a choice of symplectic trivializations of  $\gamma_{\mathfrak{c}}^*TM$, see \cite{Usher_boundary_depth}.  Given $\hat{x} = (x,[u_x]) \in \widetilde{\mathcal{P}^k}$, we can glue annuli $v:S^1\times [0,1] \to M$,  with $v(s,0) \in L_0$, $v(s,1) \in \psi^{-k}(L_1)$, to $v_x$ and obtain \textit{recappings} $v\#\hat{x}= (x,[v\#v_x ])$. 
We consider $\mathcal{A}:\widetilde{\mathcal{P}}^k \to \R$, defined by \begin{align}\label{action}\mathcal{A}(x,[v_x]) = -\int_{[0,1]^2} v_x^*\omega.
\end{align}
The critical points of $\mathcal{A}$ are those $(x,[v_x])$ for which $x\in L_0\cap \psi^{-k}(L_1)$ is constant.  We define $\CF(L_0,\psi^{-k}(L_1))$ as in   \cite{CGG_barcode}: fix for each $x\in L_0\cap \psi^{-k}(L_1)$ one equivalence class of cappings $[v_x]$ of $x$. Denote the set of those pairs by $\mathcal{X}^k \subset \widetilde{\mathcal{P}}^k$, and let  
$\CF(L_0,\psi^{-k}(L_1))$ be the vector space  generated by the elements in $\mathcal{X}^k$  over the universal Novikov field $\Lambda$, which is  formed by the formal sums $\lambda = \sum_{j\geq 0} f_jT^{a_j}$, $f_j \in \Z_2$, $a_j\in \R$,  subject to the condition $\#\{j\, |\, f_j \neq 0, a_j\leq C\} < \infty$ for all $C\in \R$. 
To define the boundary map $\partial:  \CF(L_0,\psi^{-k}(L_1)) \to \CF(L_0,\psi^{-k}(L_1))$, one  considers, for a choice of  $S^1$-family $J= (J_t)_{t\in S^1}$ of almost complex structures compatible with $\omega$ and critical points $\hat{x}_{\pm} = (x_{\pm},[v_{{\pm}}])$ of $\mathcal{A}$, the moduli space $\mathcal{M}_{J}(\hat{x}_-, \hat{x}_+,L_0,\psi^{-k}(L_1))$ of  holomorphic strips  $u:\R \times [0,1] \to M,$
\begin{align}\label{hol_eq}
\partial_s u(s,t)+ J_t(u(s,t))\partial_t u(s,t) = 0,
\end{align}
subject to boundary conditions $u(s,0) \subset L_0$, $u(s,1) \in \psi^{-k}(L_1)$ ($s\in \R$),    
asymptotics   $\lim_{s\to \pm\infty}u(s,t) \equiv x_{\pm}$, and for which  the glued map $v_{-} \# u$ is equivalent to $v_{+}$. 
If $J$ is such that  the moduli spaces are transversally cut out, which holds generically,  one can show that if the Maslov index difference of $\hat{x}$  and $v\#\hat{x}_j$ is $1$, $\mathcal{M}_{J}(\hat{x}_-, \hat{x}_+,L_0,\psi^{-k}(L_1))/\R$ is finite, where we are modding out by the canonical $\R$-action on solutions of \eqref{hol_eq}. This makes it possible to define the boundary map $\partial$, defined on the generators $\mathcal{X}^k$ as 
 $\partial \hat{x}  = \sum \lambda_{j} \hat{x}_j$, where 
 $\lambda_j = \sum_{v} f_v T^{\omega(v)}\in \Lambda$.  Here the sum is taken over all recappings $v$ of $\hat{x}_j$   such that the Maslov index difference of $\hat{x}$  and $v\#\hat{x}_j$ is $1$,
 and  $f_v$ is the number of elements (mod $2$) in $\mathcal{M}_J(\hat{x},v\#\hat{x}_j, L_0,\psi^{-k}(L_1))/\R$. 
    One shows that $\partial$ is indeed a boundary map $(\partial^2 = 0)$ and that the homology denoted by $\HF(L_0,\psi^{-k}(L_1))$ does not depend on the choice of $J$. 

The homology is \textit{invariant} under Hamiltonian isotopy: a Hamiltonian isotopy  $L^s_0= \phi_{F}^s(L_0)$, $F:M\times \R/\Z \to \R$,  $s\in [0,1]$, from $L_0$ to $L'_0:= L^1_0$ with $(L'_0, L_1)$  non-degenerate, induces for every $k\in \N$ a chain map $f_0:\CF(L_0,\psi^{-k}(L_1)) \to \CF(L'_0,\psi^{-k}(L_1))$, defined by  counting holomorphic strips $u:\R\times S^1 \to M$ (with $s$-dependent $J$) in a similar manner as above, now with \textit{moving Lagrangian boundary conditions} 
$u(s,0) \in L^{\beta(s)}_0$ and $u(s,1) \in \psi^{-k}(L_1)$ $(s\in \R)$, where 
$\beta:\R \to [0,1]$ is a smooth non-decreasing surjective function, constant outside a compact interval \cite[\S 5]{Oh}, see also \cite[\S 3.2]{Biran_Cornea13}. Similarly, one defines $g_0:\CF(L'_0,\psi^{-k}(L_1)) \to \CF(L_0,\psi^{-k}(L_1))$
 and one obtains that $g_0 \circ f_0$ is chain homotopic to the identity. Analogously, for a Hamiltonian isotopy $L_1^s$, $s\in [0,1]$, from $L_1$ to $L'_1$, one obtains for all $k\in \N$ a chain map 
  $f_1:  \CF(L_0,\psi^{-k}(L_1)) \to \CF(L_0,\psi^{-k}(L'_1))$ with similar properties.   

\subsection{Rescaling and lifts of holomorphic strips}
Given a holomorphic strip $u\in\mathcal{M}_{J}(\hat{x}_-,\hat{x}_+,L_0,\psi^{-k}(L_1))$, we will consider  the \textit{rescaled strip} $\check{u}:\R \times [0,k] \to M$ defined by $\check{u}(s,t) = u(\frac{1}{k} s, \frac{1}{k}t)$. The strip $\check{u}$ is holomorphic, that is, it satisfies \eqref{hol_eq} with $J_t$ replaced by  $J_{\frac{1}{k}t}$. 
By composing with the Hamiltonian flow, we obtain   
$$\hat{u}:\R\times [0,k] \to M, \, \, \hat{u}(s,t) := \phi^t_H(\check{u}(s,t)).$$ The strip~$\hat{u}$ satisfies the Floer equation 
\begin{equation}\label{Floer_eq}
\partial_s \hat{u}(s,t) + \hat{J}_{t}(\hat{u}(s,t))(\partial_t \hat{u}(s,t) - X_{H_t}(\hat{u}(s,t))) = 0,
\end{equation}
where $\hat{J}_t := (\phi_H^t)_{*} J_{\frac{1}{k}t}$, $t\in [0,k]$.
The asymptotics of $\hat{u}$ are chords from $L_0$ to $L_1$ ($\lim_{s\to\pm \infty}\hat{u}(s,t) = \phi^{t}_{H}(x_{\pm})$, $t\in [0,k]$)  and $\hat{u}$ satisfies $\hat{u}(s,0)\in L_0$, $\hat{u}(s,k) \in L_1$, $s\in \R$. This identification extends in a natural way to the situation of moving boundary conditions and $s$-dependent $J$.

Additionally, we consider also the maps
\begin{align*}
&\overline{u}: \R \times [0,k]\to \R/\Z \times M = Y, \,\,  \overline{u}(s,t):= ([t], \hat{u}(s,t)), \\
&\widetilde{u}: \R \times [0,k] \to \R \times \R/\Z \times M,\,  \, \widetilde{u}(s,t) = (s,[t], \hat{u}(s,t)), \\
&\dtilde{u}:\R \times [0,k] \to \R \times [0,k]\times M, \, \, \dtilde{u}(s,t) := (s,t,\hat{u}(s,t)).
\end{align*} 
With respect to  a suitable  (time-independent)  $\widetilde{J}$, the lift $\dtilde{u}$ is  $\widetilde{J}$-holomorphic. 
Here, $\widetilde{J}$ is defined at a point  $(s,t,p) \in \R \times [0,k] \times M$ by $\widetilde{J}_{(s,t,p)}(\partial_s) =\partial_t + X_H(s,t,p)$, where $X_H(s,t,p)$ is the (time-independent) horizontal lift  of $X_{H_t}(p)$ to $\R \times [0,k] \times M$, and by the condition that 
$\widetilde{J}_{(s,t,p)}(v)$ is the horizontal lift of $\hat{J}_{t}(v)$ to $\{s\} \times \{t\} \times M$. 
The boundary components of $\dtilde{u}$ then lie in $\R \times \{0\} \times L_0$ and $\R \times \{k\} \times L_0$.

In this situation, we can also naturally generalize to moving boundary conditions and $s$-dependent $J$. In particular, the condition with moving boundary $L^s_i$, $i=0,1$, corresponds to the condition that the boundary components of $\dtilde{u}$ lie in $\{(s,0,p)\, |\, s\in \R, p \in L_0^{\beta(\frac{1}{k}s)}\}$ and $\{(s,k,p)\, |\, s\in \R, p \in L_1^{\beta(\frac{1}{k}s)}\}$, where $\beta$ is defined as above.
\subsection{Relative barcode entropy}\label{sec:relbarent}
We first keep the assumption that the pair $(L_0,L_1)$ is non-degenerate. The action $\mathcal{A}$ defined in \eqref{action} can be extended to general elements of $\CF(L_0,\psi^{-k}(L_1))$. We set $\nu(\lambda) := \min a_j$  for $\lambda = \sum_{j\geq 0} f_jT^{a_j}\in \Lambda$,  $\nu(0) = + \infty$, and put $$\mathcal{A}(\lambda \hat{x}) := \mathcal{A}(\hat{x}) -\nu(\lambda)  
\text{ and } \mathcal{A}\left(\sum \lambda_i\hat{x}_i\right):= \max_i \mathcal{A}(\lambda_i \hat{x}_i).$$

The differential $\partial$ strictly decreases the action $\mathcal{A}$. In fact, for any $u\in\mathcal{M}_{J}(\hat{x}_-,\hat{x}_+,L_0,\psi^{-k}(L_1))$,
$$\mathcal{A}(x_-) - \mathcal{A}(x_+) = E(u).$$ 
Here, $E(u)$ is the energy of $u$ given by  
$$E(u) = \frac{1}{2}\int_{-\infty}^\infty \|du\|^2 =  \int_{-\infty}^\infty \int_{0}^1 |\partial_s u(s,t)|^2 \, dt  \, ds,$$
where $|\cdot|$ is induced by the metric $\omega(\cdot, J\cdot)$.

We are now in the situation to give the definition of (relative) barcode entropy. We keep the discussion rather short, making use of the results in \cite{Usher_Zhang} and refer the reader to \cite{CGG_barcode} and references therein for an equivalent definition and further motivation.  Given $\epsilon>0$, a non-zero vector $\zeta \in \im \partial \subset \CF(L_0,\psi^{-k}(L_1))$ is said to be \textit{$\epsilon$-robust} if for every $y\in \CF(L_0,\psi^{-k}(L_1))$ with $\partial y = \zeta$, 
$$\mathcal{A}(y) - \mathcal{A}(\zeta)>\epsilon.$$  
 A subspace $V \subset \im \partial \subset \CF(L_0,\psi^{-k}(L_1))$ is called \textit{$\epsilon$-robust} if every non-zero vector $\zeta \in V$ is $\epsilon$-robust.  
Define  $$b^*_{\epsilon}(\psi;L_0,L_1,k) := \max\{ \dim V \, |\, V\subset\CF(L_0,\psi^{-k}(L_1)) \text{ is } \epsilon\text{ - robust}\}.$$ 
By results in \cite{Usher_Zhang} and the discussion in \cite{CGG_barcode}, the integer $b^*_{\epsilon}(\psi;L_0,L_1,k)$ coincides with the number of finite bars of length larger than  $\epsilon$ in the definition of the barcode of $\CF(L_0,\psi^{-k}(L_1))$ in \cite{CGG_barcode}. 
To keep the same conventions as in \cite{CGG_barcode}, we also consider  the infinite bars and set $$b_{\epsilon}(\psi;L_0,L_1,k) = b^*_{\epsilon}(\psi;L_0,L_1,k) + \dim \HF(L_0, \psi^{-k}(L_1)).$$
For a general admissible, not necessarily non-degenerate pair $(L_0,L_1)$, define   
$$b_{\epsilon}(\psi;L_0,L_1,k) = \liminf_{(L'_0,L'_1)\to (L_0,L_1)} b_{\epsilon}(\psi;L'_0,L'_1,k),$$ where the limit is taken over all non-degenerate pairs $(L'_0,L'_1)$ and the convergence of pairs is considered with respect to the $C^{2}$ topology. By stability properties of barcodes, see below, the above is well defined.

Following \cite{CGG_barcode}, define the \textit{$\epsilon$-relative barcode entropy} as $$\hbar_{\epsilon}(\psi;L_0,L_1) = \limsup_{k\to \infty} \frac{\log(b_{\epsilon}(\psi;L_0,L_1,k))}{k},$$ and the \textit{relative barcode entropy} as 
$$\hbar(\psi;L_0,L_1) = \lim_{\epsilon\to 0} \hbar_{\epsilon} (\psi;L_0,L_1).$$
Additionally, for any $R>0$, let us also recall from the introduction the definition of the \textit{$R$-strong relative barcode entropy}, 
$$\mathsf{H}^R(\psi;L_0,L_1) = \lim_{\substack{\hat{R}\to R\\ \hat{R}<R}}\limsup_{k\to \infty} \frac{\log (b_{\hat{R}k}(\psi;L_0,L_1,k))}{k}.$$
Some important properties of the (strong) barcode entropy follow from known stability properties of the filtered Floer homology with respect to the Hofer-norm and the $\gamma$-norm.
Recall that for a  Hamiltonian diffeomorphism $\psi$ on $(M, \omega)$, its \textit{Hofer}-norm is defined as 
$$\|\psi\|_{\hofer} = \inf_{H} \int_{0}^1 \max H(x,t) - \min H(x,t) \, dt,$$  and its $\gamma$-norm as 
$$\gamma(\psi) = \inf (c(H) + c(\overline{H})). $$
In both cases the  infimum is taken over all (time-dependent) Hamiltonian functions $H$ such that $\psi = \phi_H^1$, moreover $c$ denotes the spectral number associated to the fundamental class $[M]$, and $\overline{H}$ denotes the Hamiltonian with $\phi_{\overline{H}}^t = (\phi_H^t)^{-1}$, see \cite{Hofer90}, and \cite{Viterbo92,Schwarz2000, Oh05}, respectively.  It is well-known that $\gamma(\psi) \leq \|\psi\|_{\hofer}$.  
Given a Hamiltonian isotopy class $\mathcal{L}$ of closed monotone Lagrangian submanifolds, the \textit{exterior $\gamma$-distance} on $\mathcal{L}$ is defined as $$d_{\gamma}(L,L') : = \inf \{\gamma(\psi)\, |\, \psi \in \Ham(M,\omega), \psi(L) = L'\}.$$
Analogously the Hofer-distance $d_{\hofer}$ is defined.   

For our purposes it is convenient to express the stability properties in terms of the quantities  $b_{\epsilon}(\psi;L_0,L_1,k)$ for triples $(\psi,L_0,L_1)$ as above and $k\in \N$: for any $\epsilon>0$, it holds that if $L'_0$ is a Lagrangian in $M$ with $d_{\gamma}(L'_0,L_0)< \delta<\epsilon$ and such that $(L'_0,L_1)$ is admissible, then 
\begin{align}\label{b_epsilon1}
b_{\epsilon-\delta}(\psi;L'_0,L_1,k) \geq b_{\epsilon}(\psi;L_0,L_1,k).
\end{align}
This statement follows from the work \cite{Kislev-Shelukhin}; the weaker statement that the same holds with respect to Hofer-distance already follows from a straightforward adaption of results in \cite{polterovich2016autonomous}.
The same results imply the following statement: if $\psi'$ is a Hamiltonian diffeomorphism on $M$  with $d_{\gamma}(\psi',\psi):=\gamma(\psi^{-1} \circ \psi')< \delta < \epsilon$, then
\begin{align}\label{b_epsilon2}
b_{\epsilon-\delta}(\psi';L_0,L_1,1) \geq b_{\epsilon}(\psi;L_0,L_1,1),
\end{align}
and hence, by the triangle inequality, 
\begin{align*}
    b_{\epsilon-k\delta}(\psi';L_0,L_1,k) \geq b_{\epsilon}(\psi;L_0,L_1,k), 
    \end{align*}
    if $k\delta < \epsilon$. 
By \eqref{b_epsilon1} and the fact that 
\begin{align}\label{symmetric}
b_{\epsilon}(\psi;L_0,L_1,k) = b_{\epsilon}(\psi^{-1};L_1,L_0,k),
\end{align}
see \cite{CGG_barcode}, it follows that 
$\hbar(\psi;L_0,L_1)$ is lower semi-continuous in $(L_0,L_1)$ with respect to $d_{\gamma}$. Furthermore,  Proposition \ref{prop:strong_prop} from the introduction can be proved using the facts above.
\begin{proof}[Proof of Proposition  \ref{prop:strong_prop}]
If $(L'_0,L'_1)$ are Hamiltonian isotopic to $(L_0,L_1)$, then by \eqref{b_epsilon1} and \eqref{symmetric}, for $R>0$ and $k$ sufficiently large, $$b_{Rk-Z}(\psi; L'_0,L'_1,k)\geq b_{Rk}(\psi;L_0,L_1,k),$$ where $Z:=d_{\gamma}(L_0',L_0) +d_{\gamma}(L'_1,L_1)$. Assertion (i) follows directly  if we apply this inequality for any given $\epsilon>0$  to $\hat{R} = R-\epsilon$: for $k>Z/\epsilon$,  
$$b_{(R-2\epsilon)k}(\psi; L'_0,L'_1,k)\geq b_{\hat{R}k-Z}(\psi; L'_0,L'_1,k)\geq b_{\hat{R}k}(\psi;L_0,L_1,k).$$   
%$d_{\gamma}(\id,\psi^k) = \gamma(\psi^k) \leq k\gamma(\psi)$ by the triangle inequality. 
To obtain (iii), it is sufficient to show that for any $0<{R'}<R< \gamma(\psi)=:Z$ and 
 any sequence $k_i$ of natural numbers with $k_i \to +\infty$ there exists (up to passing first to a subsequence) a sequence $k'_i$ of natural numbers such that 
$$\lim_{i \to +\infty} \frac{\log b_{R'k'_i-Z}(\psi;L_0,L_1, k'_i)}{k'_i} \geq \frac{\gamma(\psi) - R'}{\gamma(\psi) - {R}}  \lim_{i \to +\infty} \frac{\log b_{{R}k_i}(\psi;L_0,L_1, k_i)}{{k}_i}.$$
Let us choose   $k'_i := \lceil t'_i \rceil$ with $t'_i := \frac{\gamma(\psi) - R}{\gamma(\psi) - {R'}} k_i$. 
Since we have  $d_{\gamma}(\psi^{k'_i}, \psi^{k_i}) \leq (k_i-k'_i)\gamma(\psi)$, \eqref{b_epsilon2} implies that  $$b_{Rk_i -\gamma(\psi)(k_i-k'_i)}(\psi;L_0,L_1, k'_i)\geq b_{Rk_i}(\psi; L_0,L_1, k_i).$$
Note that $$Rk_i -\gamma(\psi)(k_i-k'_i) \geq  k_i(R-\gamma(\psi)) + \gamma(\psi)t'_i = R't'_i.$$ 
Hence,  $$ b_{R't'_i}(\psi;L_0,L_1, k'_i)\geq  b_{Rk_i}(\psi; L_0,L_1, k_i). $$
The left-hand side is at most $b_{R'k'_i-Z}(\psi;L_0,L_1, k'_i)$, and (iii) follows. 
That $\gamma(\psi) = R$ implies $\mathsf{H}^R(\psi;L_0,L_1)=0$ follows easily from the inequality in (iii), and hence also assertion (ii) directly follows. 
Assertion (iv) can be easily shown using 
 \eqref{b_epsilon2}, and  (v) is obvious. 
\end{proof}
For further properties of barcode entropy, we refer to \cite[Prop.~4.4]{CGG_barcode}.

\section{Pseudo-orbits 
 property for small energies and the proof of Theorem \ref{mainthm:higherdim}}\label{sec:Theorem1}
\subsection{Holomorphic curves with small energy and pseudo-orbits}
We start this section with a lemma that asserts that if a holomorphic strip $u$ has sufficiently small energy, then the paths $t\mapsto\overline{u}(s,t)$ are $\varphi$-pseudo-orbits in $Y$ along the intervals where they are defined. See also Lemma~6.3 in \cite{CGG_barcode} for a related statement.   
Let $\hat{J}$ be a fixed (possibly $t$-dependent)   almost complex structure on $M$, and let \begin{align}\label{specialJ}
J = J_t =  [D\phi_H^t]^{-1}\hat{J}D\phi_H^{t}.
\end{align}
\begin{lem}\label{lem:dubound}
Let $(L_0,L_1)$ be a non-degenerate admissible pair of Lagrangian submanifolds in $M$. Then, given $\eta>0$, there exists $\epsilon= \epsilon(\eta,L_0,L_1)>0$ such that if $u\in \mathcal{M}_{J_{kt}}(\hat{x}_-,\hat{x}_+,L_0,\psi^{-k}(L_1))$ (for some $\hat{x}_\pm$ and $k\in \N$) 
and $E(u)<\epsilon$, then
for all $s\in \R$ and all $\hat{n} \in \Z$, $0\leq \hat{n} \leq k-1$, $t \in [0,1]$,  \begin{align}\label{eq:pseudo}
\overline{d}(\overline{u}(s,t+ \hat{n}),\varphi^{t}(\overline{u}(s,\hat{n}))) < \eta.
\end{align}   
Moreover, given a $C^2$-small neighbourhood of any fixed   admissible pair $(L'_0,L'_1)$, then there exists  $\epsilon=\epsilon(\eta)>0$ such that the above holds for all non-degenerate admissible pairs $(L_0,L_1)$ in that neighbourhood.  
\end{lem}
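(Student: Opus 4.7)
The plan is to argue by contradiction. Suppose the statement fails: there are $\eta > 0$, sequences $k_n \in \N$, Floer strips $u_n \in \mathcal{M}_{J_{k_n t}}(\hat{x}^n_-, \hat{x}^n_+, L_0, \psi^{-k_n}(L_1))$ with $E(u_n) \to 0$, and $s_n \in \R$, $\hat{n}_n \in \{0, \dots, k_n - 1\}$, $t_n \in [0,1]$ for which \eqref{eq:pseudo} fails by at least $\eta$. Unfolding the definitions of $\overline{u}_n$ and $\varphi$, and using $[t_n + \hat{n}_n] = [t_n]$ in $S^1 = \R/\Z$, this failure reduces to
\begin{align*}
d\bigl(\hat{u}_n(s_n, t_n + \hat{n}_n),\; \phi_H^{t_n}(\hat{u}_n(s_n, \hat{n}_n))\bigr) \geq \eta.
\end{align*}
The point of the choice $J_t = [D\phi_H^t]^{-1}\hat{J}D\phi_H^t = (\phi_H^t)^*\hat{J}$ is that the twisted almost complex structure $\hat{J}_t = (\phi_H^t)_* J_t$ equals $\hat{J}$, so $\hat{u}_n$ solves $\partial_s \hat{u}_n + \hat{J}(\partial_t \hat{u}_n - X_{H_t}(\hat{u}_n)) = 0$ and the lift $\dtilde{u}_n$ is $\widetilde{J}$-holomorphic for one fixed $\widetilde{J}$ on $\R \times \R \times M$ that does \emph{not} depend on $n$, $k_n$, or $\hat{n}_n$.

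The main analytic step is a uniform \emph{$\epsilon$-regularity} (mean value) estimate: for every $\eta' > 0$ there exists $\epsilon_0 = \epsilon_0(\eta', H, \hat{J}, L_0, L_1) > 0$ such that if $\hat{u}$ solves the above Floer equation on a unit rectangle $R = [s_0 - 1, s_0 + 1] \times [\tau_0, \tau_0 + 1]$ (with the appropriate Lagrangian boundary on $\R \times \{0\} \times L_0$ or $\R \times \{k\} \times L_1$ when $R$ touches $\{t = 0\}$ or $\{t = k\}$) and $E(\hat{u}|_R) < \epsilon_0$, then $|\partial_s \hat{u}| < \eta'$ on $[s_0 - 1/2, s_0 + 1/2] \times [\tau_0, \tau_0 + 1]$. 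This is a direct application of the classical Gromov monotonicity / mean value inequality to $\dtilde{u}$, in its interior form away from the boundary and in its Lagrangian boundary form (as in McDuff--Salamon, Frauenfelder) when $\hat{n}_n \in \{0, k_n - 1\}$. The uniformity of $\widetilde{J}$ in $n$ and $k_n$ makes the constants independent of these parameters; bubbling of discs or spheres is excluded in the small-energy regime by the monotonicity hypothesis $N_{L_i} \geq 2$ together with the positive minimal cylinder area $\mathfrak{h}(M, L_i)$, both of which depend continuously on the Lagrangian pair in the $C^2$-topology. This is precisely what supplies the \textit{moreover} uniformity over a $C^2$-small neighborhood.

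Granted the $\epsilon$-regularity, the conclusion follows by a Gronwall argument. Set $\gamma_n(t) := \hat{u}_n(s_n, \hat{n}_n + t)$ and $\delta_n(t) := \phi_H^t(\hat{u}_n(s_n, \hat{n}_n))$ for $t \in [0,1]$. Then $\gamma_n(0) = \delta_n(0)$, and the Floer equation combined with the $1$-periodicity $X_{H_{t + \hat{n}_n}} = X_{H_t}$ gives
\begin{align*}
\dot{\gamma}_n(t) - X_{H_t}(\gamma_n(t)) = \hat{J}\bigl(\partial_s \hat{u}_n(s_n, \hat{n}_n + t)\bigr), \qquad \dot{\delta}_n(t) = X_{H_t}(\delta_n(t)).
\end{align*}
Applying the $\epsilon$-regularity with $\tau_0 = \hat{n}_n$ and $\eta'$ chosen so that $e^{L}\|\hat{J}\|\,\eta' < \eta$ (with $L$ a Lipschitz constant for $X_H$ on $[0,1] \times M$), Gronwall yields $d(\gamma_n(t_n), \delta_n(t_n)) < \eta$, contradicting the assumed failure for large $n$. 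The main technical obstacle I anticipate is establishing the $\epsilon$-regularity with constants uniform \emph{both} in the iteration length $k_n$ and in the Lagrangian pair across a $C^2$-neighborhood; the former is handled by the careful choice of $J_t$ that makes $\widetilde{J}$ independent of $k_n$, while the latter rests on a uniform no-bubbling threshold derived from the continuity of $\mathfrak{h}(M, L)$ and of the $C^2$-geometry of $L$.
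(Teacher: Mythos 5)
Your proof is correct and takes essentially the same route as the paper's: a mean-value / a priori gradient estimate for small-energy pseudo-holomorphic strips with Lagrangian boundary conditions (the paper applies McDuff--Salamon Lemma~4.3.1 to the conjugated $J_t$-holomorphic curve $v(s,t)=\psi^{\hat n}(\check u(s,t+\hat n))$ on unit disks, resp.\ half-disks at the boundary, while you apply the equivalent graph-trick version to $\dtilde u$), followed by a comparison step converting the pointwise bound into the pseudo-orbit estimate (the paper pushes forward by the uniformly Lipschitz flow $\varphi^t$, you run Gronwall --- the same estimate), with the $C^2$-continuity of the constants supplying the ``moreover'' clause in both cases.
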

\begin{proof}
For $u\in \mathcal{M}_{J_{kt}}(\hat{x}_-,\hat{x}_+,L_0,\psi^{-k}(L_1))$ as in the lemma, and $\hat{n} \in \Z$, $0\leq \hat{n} \leq k-1$,  consider $v:\R \times [-\hat{n},k-\hat{n}] \to M$, defined by 
$$v(s,t) := \psi^{\hat{n}}(\check{u}(s,t+\hat{n}))= \psi^{\hat{n}}\left({u}\left(\frac{1}{k}s,\frac{1}{k}(t+\hat{n})\right)\right).$$ 
We note that $v$ is $J_t$-holomorphic and that $v|_{\R \times \{-\hat{n}\}} \subset \psi^{\hat{n}}(L_0)$, $v|_{\R \times \{k-\hat{n}\}} \subset \psi^{\hat{n}-k}(L_1)$. 
Also note that 
\begin{align}\label{u1}
\begin{split}
\overline{u}(s,\hat{n})&=(0,v(s,0)), \\  
\overline{u}(s,t+\hat{n}) &= (t,  \phi_H^{t+\hat{n}}(\check{u}(s,t+\hat{n}))) = \varphi^t((0, v(s,t))),
\end{split}
\end{align} for all $s\in \R, t\in [0,1]$. \color{black}

To simplify the exposition, let us assume that $k\geq 2$, and it will be obvious how to treat the case $k=1$. For $(s,t) \in \R^2$ and $r>0$,  denote by $D_r(s,t)$ the open disk of radius $r$ around $(s,t)$.
We note the following. If $\hat{n}\notin\{0, k-1\}$, then for any point $(s,t) \in \R \times [0,1]$ we have that $D_1(s,t)$ is contained in the domain $\dom(v)$ of $v$; if  $\hat{n} =0$, then for any $s\in \R$  we have that $D_2(s,0) \cap (\R \times [0,+\infty))\subset \dom(v)$; and if $\hat{n} = k-1$, then for any $s\in \R$ we have that  $D_2(s,1)\cap (\R \times (-\infty,1])\subset \dom(v)$. 
We can apply a priori estimates for the differential of $v$ for $(s,t) \in \R \times [0,1]$, see  \ \cite[Lemma~4.3.1]{McDuffSalamon}\footnote{That Lemma is  formulated for a fixed almost complex structure, but it can be easily proved that it continuous to  hold  for $(s,t)$-dependent (uniformly bounded) almost complex structures.}, and 
obtain that there exist $\delta>0, c>0$ (depending on $L_0$ and $L_1$), 
such that if $\int_{\R \times [-\hat{n},k-\hat{n}]} |dv|^2\,  ds \wedge dt< \delta$, then for all $s\in \R, t\in [0,1]$, 
$$|dv(s,t)|^2 \leq c \int_{\hat{D}} |dv|^2 \, ds \wedge dt< c\delta, $$
where
$$\hat{D}:= \begin{cases}&D_1(s,t) \, \, \, \quad\qquad\qquad\qquad \text{ if } \hat{n} \notin\{0, k-1\}, \\ &D_2(s,0) \cap (\R \times [0,+\infty)) \, \text{ if } \hat{n} =0, \\ &D_2(s,1) \cap (\R \times (-\infty, 1]) \, \text{ if }\hat{n} = k-1.\end{cases}$$  

We note that $\int_{\R \times [-\hat{n}, k-\hat{n}]} |dv|^2 \, ds \wedge dt = 2 E(u)$, and hence if $E(u)$ is small, then for any $s \in \R$ the curve $t\mapsto v(s,t)$, $t\in [0,1]$, stays uniformly close to the point  $v(s,0) = \psi^{\hat{n}}(\check{u}(s,\hat{n}))$. When applying the flow $\varphi^t$ to $(0,v(s,0))$ and $(0,v(s,t))$ for $0\leq t\leq 1$, this means with \eqref{u1} that, given $\eta>0$,  there exist $\epsilon>0$ such that if $E(u)< \epsilon$, then \eqref{eq:pseudo} holds for $0\leq t\leq 1$. This shows the first assertion. Moreover,  $\delta$ and $c$ above can be chosen to vary  continuously in $L_0$ and $L_1$ in the $C^2$ topology, cf.\ \cite[Rmk.\ 4.3.2]{McDuffSalamon}, and  are defined even for degenerate pairs $(L_0,L_1)$. The second assertion of the lemma follows. 
\end{proof}
We will also use the following, slightly more general   statement, which can be obtained as a  direct consequence of Lemma \ref{lem:dubound}. 
\begin{cor}\label{cor:pseudo_2}
For all $\eta>0$, $B\in\N$, there exists $\epsilon= \epsilon(\eta,B)>0$ such that if $u\in \mathcal{M}_{J_{kt}}(\hat{x}_-,\hat{x}_+,L_0,\psi^{-k}(L_1))$ (for some $\hat{x}_\pm$ and $k\in \N$) 
and $E(u)<\epsilon$, then for all $s\in \R$ and all $\hat{t} \in [0,k]$, $t \in [-B, B ]$ with $t+\hat{t} \in [0,k]$, 
\begin{align}
\overline{d}(\overline{u}(s,t+ \hat{t}),\varphi^{t}(\overline{u}(s,\hat{t}))) < \eta. 
\end{align} 
In particular, for all $s\in \R$, and all $\hat{n},n \in \Z$, $0\leq \hat{n} \leq k$, $-B \leq n\leq B$, $0\leq n+\hat{n} \leq k$, 
\begin{align}\label{eq:pseudo_discrete}
d(\hat{u}(s,n+ \hat{n}),\psi^n(\hat{u}(s,\hat{n}))) < \eta.
\end{align}
\end{cor}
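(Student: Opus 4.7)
The plan is to derive the corollary from Lemma \ref{lem:dubound} by iteration, handling separately the fractional and integer parts of the time shifts. Fix $\eta>0$ and $B\in \N$. Set $L_B$ to be a Lipschitz constant on $Y$ for the family $\{\varphi^t\}_{|t|\leq B+1}$ with respect to $\overline{d}$, which exists by smoothness of $\varphi$ and compactness of $Y$. I will first choose an auxiliary $\eta_0 = \eta_0(\eta, B)>0$ small enough so that $(B+2)L_B^{B+1}\eta_0 < \eta$, and then apply Lemma \ref{lem:dubound} with parameter $\eta_0$ to obtain $\epsilon = \epsilon(\eta_0, L_0, L_1) = \epsilon(\eta, B, L_0, L_1)$. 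The $C^2$-uniform version is obtained by invoking the corresponding uniform statement in Lemma \ref{lem:dubound}.

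Now assume $E(u) < \epsilon$, fix $s\in \R$, and consider $\hat{t}\in [0,k]$ and $t\in[-B,B]$ with $\hat{t}+t \in [0,k]$. Write $\hat{t} = n_0 + \tau_0$ and $\hat{t}+t = n_1 + \tau_1$ with $n_0,n_1\in\{0,1,\dots,k\}$ integers and $\tau_0,\tau_1\in [0,1)$ (with the obvious convention at $k$); then $|n_1 - n_0|\leq B+1$. Applied at the fractional shifts, Lemma \ref{lem:dubound} gives
\begin{align*}
\overline{d}\bigl(\overline{u}(s,\hat{t}),\,\varphi^{\tau_0}(\overline{u}(s,n_0))\bigr) &< \eta_0, \\
\overline{d}\bigl(\overline{u}(s,\hat{t}+t),\,\varphi^{\tau_1}(\overline{u}(s,n_1))\bigr) &< \eta_0.
\end{align*}
Iterating Lemma \ref{lem:dubound} across unit steps from $n_0$ to $n_1$ (at most $B+1$ applications, each contributing an error that is propagated by the Lipschitz constant $L_B$) yields
$$\overline{d}\bigl(\overline{u}(s,n_1),\,\varphi^{n_1-n_0}(\overline{u}(s,n_0))\bigr) < (B+1)L_B^{B}\eta_0,$$
where the chaining uses the group property of $\varphi$.

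To conclude, combine these estimates via the triangle inequality, the identity $\tau_1 + (n_1-n_0) = t + \tau_0$, and the Lipschitz bound on $\varphi^{t}$ for $|t|\leq B$ to obtain
$$\overline{d}\bigl(\overline{u}(s,\hat{t}+t),\,\varphi^{t}(\overline{u}(s,\hat{t}))\bigr) \leq L_B\bigl(\eta_0 + (B+1)L_B^{B}\eta_0\bigr) + \eta_0 < \eta,$$
by the choice of $\eta_0$. This proves the first assertion. For the discrete version \eqref{eq:pseudo_discrete}, specialize to integer $\hat{t}=\hat{n}$ and $t=n$; then $\overline{u}(s,\hat{n}) = (0,\hat{u}(s,\hat{n}))$, $\overline{u}(s,\hat{n}+n) = (0,\hat{u}(s,\hat{n}+n))$, and $\varphi^n(0,p) = (0,\psi^n(p))$ for integer $n$, so the $S^1$-coordinates agree and the $\overline{d}$-estimate projects to the desired $d$-estimate on $M$. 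The main technical point is simply the bookkeeping of accumulated errors, which stays uniform in $k$ because the number of chaining steps is bounded by $B+1$.
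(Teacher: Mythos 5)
Your argument is correct and is exactly the chaining-with-Lipschitz-constants derivation that the paper leaves implicit when it calls the corollary a "direct consequence" of Lemma \ref{lem:dubound}: the split into integer and fractional parts, the at most $B+1$ unit-step iterations of the lemma, and the projection to $M$ at integer times (where $\overline{u}(s,\hat n)=([0],\hat u(s,\hat n))$ and $\varphi^{n}([0],p)=([0],\psi^{n}(p))$) are all sound and uniform in $k$. The only nitpick is bookkeeping: your three triangle-inequality terms sum to $L_B\eta_0+(B+1)L_B^{B+1}\eta_0+\eta_0$, which for $L_B\geq 1$ is only bounded by $(B+3)L_B^{B+1}\eta_0$ rather than the $(B+2)L_B^{B+1}\eta_0$ you required of $\eta_0$, so the constant in the choice of $\eta_0$ should be enlarged accordingly.
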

Note that a \textit{$\eta$-pseudo-orbit} (for $\psi$) is a sequence  $\{y_n\}_{n\in \Z}$ in $M$ for which $d(y_n,\psi(y_{n-1})) < \eta$ for all $n\in \Z$. By Lemma \ref{lem:dubound}, if $E(u)$ is sufficiently small (independent of $k$), then the $\R$-family of orbit segments $u(s,n)$, $0\leq n\leq k$, $s\in \R$, can be completed to a family of $\eta$-pseudo-orbits.

\subsection{Local stable/unstable manifolds and growth of chords}\label{sec:stable}
We recall some facts on local stable and unstable manifolds in hyperbolic sets, see e.g.\   
\cite[\S 6.4]{Hasselblatt-Katok}. Let first, in general, $\psi:M \to M$ be a $C^{\infty}$-diffeomorphism on a closed manifold $M$. 
Let $K$ be a compact hyperbolic set for $\psi$.  
For $q\in K$, we define the local stable resp.\ unstable manifold of $q$ of size $\delta$ as 
$$W^s_{\delta}(q) = \{y \in M \, |\, d(\psi^n(y), \psi^n(q)) \leq \delta \text{ for all } n\geq 0\},$$
$$W^u_{\delta}(q) = \{y \in M \, |\, d(\psi^n(y), \psi^n(q)) \leq \delta \text{ for all } n\leq 0\}.$$
There exists $\delta^*= \delta^*(K)>0$ such that for all $\delta\leq\delta^*$, $q\in K$,  the local stable and unstable manifolds 
 $W^s_{\delta}(q)$ resp.\  $W^u_{\delta}(q)$ are embedded disks,  %tangent to $E^s$ resp.\ $E^u$ at $q$, 
 and there exist $\lambda, c>0$ such that 
\begin{align}\label{contraction}
    \psi^n(W^s_{\delta}(q)) \subset W^s_{\delta ce^{-\lambda n}}(\psi^n(q)) \text{ for all } n\geq 0,
\end{align}
\begin{align}\label{expansion}
    \psi^n(W^u_{\delta}(q)) \subset W^u_{\delta ce^{\lambda n}}(\psi^n(q)) \text{ for all } n\leq 0.
\end{align}
Furthermore, $\delta^*$ is an expansivity constant for $K$: for any $x,y\in K$, if for all $k\in 
\Z$, $d(\psi^k(x), \psi^k(y)) \leq \delta^*$, then $x=y$. 
The local stable resp.\  unstable disks at $q\in K$ 
vary continuously in $q\in K$ in the $C^{\infty}$ topology.  
Note that by \eqref{contraction} and \eqref{expansion}, if $(M,\omega)$ is symplectic and $\psi^*\omega = \omega$, we must have that   $\omega|_{W^{s}_{\delta^*}(q)} =0$  and $\omega|_{W^{u}_{\delta^*}(q)}= 0$, i.e.,   ${W^{s}_{\delta^*}(q)}$ and ${W^{u}_{\delta^*}(q)}$ are Lagrangian.

For $x\in M$, we denote by $\mathcal{O}(x) = \{\psi^n(x)\, |\, n\in \Z\}$ the full orbit of $x$ and by $\mathcal{S}^m(x):= \{x,\psi(x), \ldots, \psi^m(x)\}$, $m\geq 0$, the orbit segment of length $m$ starting at $x$. If additionally $\mathcal{S}^m(x)\subset K$, and   $x \in W^u_{\delta}(q)$, $\psi^m(x) \in W^s_{\delta}(p)$ for some $q,p \in K$, then we say that $\mathcal{S}^m(x)$ is \textit{a $(q,p,\delta)$-chord of length~$m$}. 
Note that for $x,q,p\in K$, if $d(\psi^{n}(x), \psi^{n}(q))\leq \delta$ for all $n\leq 0$ and $d(\psi^{n+m}(x),\psi^{n+m}(p))\leq\delta$ for all $n\geq 0$, then  $\mathcal{S}(x)$ is a $(q,p,\delta)$-chord of length $m$.

For $q,p \in K$, $\delta>0$, and $m\in \N$, denote by $N(q,p,\delta, m)$ the number of $(q,p, \delta)$-chords of length at most $m$.
The following is a relative version of the well-known result that $h_{\topo}(\psi|_K)$ equals the exponential growth rate of periodic orbits in $K$, if $K$ is additionally locally maximal (see Thm.~18.5.1 in \cite{Hasselblatt-Katok}). 
\begin{prop}\label{prop:entropy_chord}
If $K$ is a locally maximal, topologically transitive  hyperbolic set, then for any $q,p\in K$, $0 < \delta \leq \delta^* = \delta^*(K)$,
$$\limsup_{m\to \infty}\frac{\log N(q,p,\delta, m)}{m} =h_{\topo}(\psi|_K).$$
\end{prop}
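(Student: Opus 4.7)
The plan is to establish the two inequalities separately, mirroring the classical proof that the topological entropy of a basic set equals the exponential growth rate of its periodic points. The upper bound comes from expansivity of $K$, while the lower bound is a consequence of Bowen's specification property, which holds on every topologically transitive, locally maximal hyperbolic set.

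For the upper bound, I take $\delta < \delta^{*}/4$ (without loss of generality, by monotonicity of $N$ in $\delta$). Suppose $x_1 \neq x_2$ start two $(q,p,\delta)$-chords of length $m$. For every $j \leq 0$ the point $\psi^j(x_i)$ is $\delta$-close to $\psi^j(q) \in K$; for $0 \leq j \leq m$ it lies in $K$ by definition; for $j \geq m$ it is $\delta$-close to $\psi^{j-m}(p) \in K$. Hence both orbits sit inside the $\delta$-neighborhood of $K$, which for $\delta$ small lies in the isolating neighborhood, so local maximality forces $\mathcal{O}(x_i) \subset K$. Expansivity at scale $\delta^{*}$ produces some $j \in \Z$ with $d(\psi^j(x_1), \psi^j(x_2)) > \delta^{*}$; the triangle inequality rules out $j \leq 0$ and $j \geq m$, so $j \in \{1, \ldots, m-1\}$ and there $d(\psi^j(x_1), \psi^j(x_2)) > \delta^{*}/2$. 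Thus the starting points of the chords form an $(m, \delta^{*}/2)$-separated subset of $K$, and $N(q,p,\delta,m)$ is bounded above by the corresponding separation number $s(m, \delta^{*}/2, K)$. Since $\delta^{*}/2$ is below the expansivity constant, $\limsup_m \tfrac{1}{m}\log s(m, \delta^{*}/2, K) = h_{\topo}(\psi|_K)$, yielding the $\leq$ direction.

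For the lower bound, I fix $\epsilon_0 < \min(\delta, \delta^{*})/4$, set $\epsilon := \epsilon_0/3$, and let $M = M(\epsilon)$ denote Bowen's specification constant for $K$. For each $n$, choose a maximal $(n, 2\epsilon_0)$-separated set $E_n \subset K$; expansivity at scale $\delta^{*}$ gives $\limsup_n \tfrac{1}{n}\log |E_n| = h_{\topo}(\psi|_K)$. For each $x \in E_n$ and each large $L$, apply specification to the three orbit segments $\mathcal{S}^L(\psi^{-L}(q))$, $\mathcal{S}^n(x)$, $\mathcal{S}^L(p)$ with successive time-gaps of $M$, producing $y^L_x \in K$ whose orbit $\epsilon$-shadows each of them. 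Translate so that the three segments sit on the windows $[-L, 0]$, $[M, M+n]$, $[m, m+L]$ with $m := 2M + n$, and extract a convergent subsequence of $y^L_x$ as $L \to \infty$ using compactness of $K$, obtaining $z_x \in K$. Then $d(\psi^i(z_x), \psi^i(q)) \leq \epsilon$ for every $i \leq 0$ (so $z_x \in W^u_{\delta}(q)$), $d(\psi^{M+j}(z_x), \psi^j(x)) \leq \epsilon$ for $0 \leq j \leq n$, and $d(\psi^{m+i}(z_x), \psi^i(p)) \leq \epsilon$ for every $i \geq 0$ (so $\psi^m(z_x) \in W^s_{\delta}(p)$). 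The assignment $x \mapsto z_x$ is injective: for distinct $x_1, x_2 \in E_n$ there is $j \in \{0, \ldots, n\}$ with $d(\psi^j(x_1), \psi^j(x_2)) \geq 2\epsilon_0$, whence $d(\psi^{M+j}(z_{x_1}), \psi^{M+j}(z_{x_2})) \geq 2\epsilon_0 - 2\epsilon > 0$. Thus $N(q,p,\delta,m) \geq |E_n|$ with $m = 2M + n$, so $\limsup_m \tfrac{1}{m}\log N(q,p,\delta, m) \geq h_{\topo}(\psi|_K)$.

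The principal technical obstacle is the compactness argument extending finite-window specification to infinite-window shadowing, which is needed to place $z_x$ exactly on the local stable and unstable disks rather than merely $\epsilon$-close to $p$ and $q$. A minor subtlety is that Bowen's specification in its strongest form requires topological mixing: for a merely transitive basic set one passes to its cyclic spectral decomposition $K = K_1 \sqcup \cdots \sqcup K_s$ and works with $\psi^s|_{K_i}$, which does not affect the asymptotic growth rates.
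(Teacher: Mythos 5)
Your proof is correct and follows the same overall strategy as the paper's (which, like yours, adapts the proof of Theorem~18.5.1 in Hasselblatt--Katok): expansivity at scale $\delta^*$ gives the upper bound via $(m,\cdot)$-separated starting points, Bowen's specification gives the lower bound in the mixing case, and the spectral decomposition handles the merely transitive case. The one place where you genuinely diverge is the step that forces the shadowing point to lie \emph{exactly} on $W^u_\delta(q)$ and (after time $m$) on $W^s_\delta(p)$, rather than merely $\epsilon$-close to $q$ and $p$ at two instants. The paper gets this by peeking inside Bowen's construction, asserting that the orbit produced there "by construction" contains a $(q,p,\delta)$-chord; you instead treat the specification theorem as a black box, apply it to the segments $\mathcal{S}^L(\psi^{-L}(q))$, $\mathcal{S}^n(x)$, $\mathcal{S}^L(p)$, and pass to a limit $L\to\infty$ using compactness of $K$, so that the defining inequalities of the local invariant manifolds hold for all $i\le 0$ resp.\ $i\ge 0$. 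Your version is more self-contained (it uses only the statement of specification, not its proof) at the cost of the extra diagonal/limit argument; it is a legitimate and arguably cleaner way to close the same gap. The remaining small imprecisions (an off-by-one in the gap length $M$ versus $M-1$, summing over chord lengths $m'\le m$ in the upper bound, and the routine bookkeeping when $q$ and $p$ lie in different pieces of the spectral decomposition) are at the same level of detail that the paper itself elides.
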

\begin{proof}
The proof is a straightforward adaption of the proof of Thm.~18.5.1 in \cite{Hasselblatt-Katok}. We sketch it for the convenience of the reader. The inequality $\limsup_{m\to \infty}\frac{\log(N(q,p,\delta,m))}{m} \leq h_{\topo}(\psi|_K)$ follows from the fact that $\psi|_K$ is expansive with expansivity constant $\delta^*$. Indeed, for any two distinct  $(q,p,\delta)$-chords $\mathcal{S}^m(x_1)$ and $\mathcal{S}^m(x_2)$  of length $m\geq 1$, there exist $1\leq k\leq m$ such that $d(\psi^k(x_1), \psi^k(x_2))>\delta^*$. Otherwise, $d(\psi^k(x_1), \psi^k(x_2))\leq\delta^*$ for all $k\in \Z$, and hence $x_1= x_2$. 

For the reverse inequality, assume first that $f|_K$ is topologically mixing.  
We recall Bowen's specification theorem \cite[Thm.~2.10]{Bowen-measure}. A \textit{specification} is a pair $s=(\tau,P)$, where $\tau = \{I_1,\ldots, I_l\}$ is a finite collection of disjoint intervals of integers, and $P: I := \bigcup_{i=1}^l I_l \to K$ is map with $\psi^{k_2-k_1}(P(k_1)) = P(k_2)$  whenever $k_1,k_2 \in I_j$ for some $1\leq j\leq l$. A specification $s=(\tau,P)$ is said to be \textit{$n$-delayed} if there is an interval of length at least $n$ between every pair of intervals belonging to $\tau$.
For $\epsilon>0$ and a specification $s=(\tau,P)$, let $$V(s,\epsilon) := \{ x\in K\, |\, d(\psi^k(x),P(k)) < \epsilon \text{ for all } k\in I\}.$$   
Since $f|_K$ is assumed to be topologically mixing, Bowen's specification theorem asserts that  for any $\epsilon>0$, 
there exist $M_{\epsilon}\in \N$ such that $V(s,\epsilon)\neq \emptyset$ for any $M_{\epsilon}$-delayed specification $s$. 
If $s=(\tau,P)$ with $\tau= \{I_1, \ldots, I_l\}$ is such that $I_1 = \{a\}$ and $I_l = \{b\}$ with $P(a) = q$ and $P(b) =p$, then in fact, for any fixed $\delta < \delta^*$ one can choose $M_{\epsilon} = M_{\epsilon}(\delta)$ sufficiently large, such that the orbit $\mathcal{O}(x)$ of $x\in V(s,\epsilon)$ that is obtained in the proof in \cite{Bowen-measure} contains by construction a segment that is a $(q,p,\delta)$-chord of length $b-a$.  It follows that for any point $y$ in a $(n,\epsilon)$-separated set $E\subset K$, there exists a $(q,p,\delta)$-chord $\mathcal{S}(x)$ of length $n+ 2M_{\epsilon/2}$ such that $d(\psi^{M_{\epsilon/2} + k}(x),\psi^{k}(y))< \epsilon/2$ for all $0\leq k \leq n$. Hence there are at  least $\# E$-many $(q,p,\delta)$-chords of length at most $n+ 2M_{\epsilon/2}$. This implies that $\limsup_{m\to \infty}\frac{\log N(q,p,\delta, m)}{m} \geq h_{\topo}(\psi|_K)$ in the topologically mixing case. The generalisation to the  topologically  transitive case follows then from the spectral decomposition theorem, see \cite[Thm.~18.3.1]{Hasselblatt-Katok}.
\end{proof}
\begin{proof}[Proof of Theorem \ref{mainthm:higherdim}]
Let $K$ be a locally maximal, topological transitive, compact  hyperbolic set.
By compactness of $K$ and since $\partial U'\cap K = \emptyset$ for any isolating neighbourhood $U'$ of $K$, we can choose $\delta'>0$ such that 
\begin{enumerate}[(i)]
\item $U = N_{\delta'}(K):=\{y\in M\, |\, \inf_{x\in K}d(y,x)\leq \delta'\}$ is an isolating neighbourhood for $K$,  
\item $\delta' \leq \delta^*/2$, where $\delta^* = \delta^*(K)$ is the constant discussed above.
\end{enumerate}
Let  $(L_0,L_1)$ be an admissible pair with 
$W^u_{\delta}(q) \subset L_0$, and $W^s_{\delta}(p)\subset L_1$, for some  $q, p \in K$, $\delta \in (0,\delta']$. Assume first that $(L_0,L_1)$ is also non-degenerate. 
Let $J$ be defined as in \eqref{specialJ}.  
By Proposition~\ref{prop:entropy_chord}, and by Prop.~3.8 in \cite{CGG_barcode} (cf.\ Lemma \ref{prop:algebaric_barcode}), it is sufficient to show that there exists {$\epsilon>0$} such that if $u\in\mathcal{M}_{J_{kt}}(\hat{x}_-, \hat{x}_+,L_0,\psi^{-k}(L_1))$, $k\in \N$, with  $\hat{x}_-=(x_-,[u_-])$, $\hat{x}_+ = (x_+,[u_+]) \in  \widetilde{\mathcal{P}}_k$, satisfies that 
\begin{itemize}
    \item $\mathcal{S}^k(x_-(0))$ or $\mathcal{S}^k(x_+(0))$ is a  $(q,p,\delta)$-chord; 
    \item $E(u) < \epsilon$,
    \end{itemize}
    then  $u$ is constant. Choose $\eta>0$ such that 
    \begin{enumerate}
\item any $\eta$-pseudo-orbit that intersects $\partial U$ is not entirely contained in $\overline{U}$;
\item any continuous family of $\eta$-pseudo-orbits $\mathcal{Q}_s = \{y^s_k\}_{k\in \Z}$, $s\in \R$, in $U$ is $\delta$-shadowed by a continuous family of orbits $(\mathcal{O}(x^s))_{s\in \R}$ in $K$, i.e., $d(y^s_k,\psi^k(x^s))<\delta$ for all $k\in \Z$, $s\in \R$.
\end{enumerate}
The point (1) can be achieved by compactness of $\partial U$ and since $\partial U \cap K = \emptyset$, the point (2) by the shadowing theorem (see e.g.\ Thm.~18.1.3\ in  \cite{Hasselblatt-Katok}).

Choose $\epsilon = \epsilon(\eta)$ according to Lemma \ref{lem:dubound}, and such that $\epsilon<\mathfrak{h} := \mathfrak{h}(M,L_i)$,  $i=0,1$. 
Let $k\in \N$, $u\in\mathcal{M}_{J_{kt}}(\hat{x}_-, \hat{x}_+,L_0,\psi^{-k}(L_1))$ such that {$E(u)< \epsilon$}, and assume that $\mathcal{S}^k(x_+(0))$ is a $(q,p,\delta)$-chord, the case that $\mathcal{S}^k(x_-(0))$ is a $(q,p,\delta)$-chord is treated similarly.
For $s\in \R$, we set $y^s := \hat{u}(s,0)\in L_0$  
and consider the sequence $(y^s_n)_{n\in \Z}$ in $M$, given by 
\begin{align*}
y^s_n := \begin{cases} \hat{u}(s,n) &\text{ if } 0\leq n \leq k,\\ 
 \psi^{n}(y^s) &\text{ if } n< 0, \\ \psi^{n-k}(\hat{u}(s,k)) &\text{ if }
n> k.\end{cases}
\end{align*}
By Lemma \ref{lem:dubound}, 
$\mathcal{Q}_s := \{y^s_n\}_{n\in \Z}$, $s\in \R$, is a continuous family of $\eta$-pseudo-orbits for $\psi$. Note that since $y^s_0 = y^s \to x_+(0)$ as $s \to \infty$, $y^s \in L_0$, $y^s_k\in L_1$, we have that $y_0^s \in W^{u}_{\delta}(q)$ and $y^s_k\in W^s_{\delta}(p)$ as $s$ sufficiently large. Since $\delta \leq \delta^*$  and  $N_{\delta}(K) \subset U$, it follows that $\mathcal{Q}_s \subset U$ for $s$ sufficiently large.
In fact,  $\mathcal{Q}_s \subset U$ for all $s\in \R$. Otherwise,  if $s_0 := \inf\{s\in \R\, |\, \mathcal{Q}_s\subset U\}>-\infty$, then  
$\mathcal{Q}_{s_0}$ intersects $\partial U$ which contradicts (1). Hence, we can find by (2)  a continuous family of orbits $(\mathcal{O}(x^s))_{s\in \R}$ in $K$ that $\delta$-shadows $(\mathcal{Q}_s)_{s\in \R}$.  It follows that $(\mathcal{S}^k(x^s))_{s\in \R}$ is a continuous family of $(q,p,2\delta)$-chords. Note that the orbit $\mathcal{O}(x_+(0))$ $\delta$-shadows the pseudo-orbit $\mathcal{Q}_s$ if $s$ is sufficiently large. By (ii), $2\delta$ is an expansivity constant, and therefore in fact $x^s = x_+(0)$ for $s$ sufficiently large. Similarly, since also $x_-(0) \in K$, we have that $x^s=x_-(0)$ for $s$ sufficiently small.   
Since $\psi^k(W^u_{2\delta}(q))$ intersects $W^s_{2\delta}(p)$ transversally, the starting points of $(q,p,2\delta)$-chords of length $k$ are isolated. It follows that the path $(x^s)_{s\in \R}$ is constant, so $x_-(0) = x_+(0)$. Since $E(u) < \epsilon<\mathfrak{h}$, it follows that $E(u) = 0$, and we conclude that  $u$ is constant.

This concludes the statement in the non-degenerate case. To pass over to a degenerate pair $(L_0,L_1)$, note the following.  Since the iterations of the local unstable manifold already intersect the local stable manifold transversally, it is possible to perturb $L_0$ and $L_1$ outside the segments $W_{\delta}^s$ resp.\ $W_{\delta}^u$ in the $C^{\infty}$ topology to a non-degenerate admissible pair. 
Since $\epsilon>0$ can be chosen to be constant in a neighbourhood of $(L_0,L_1)$, the assertion of the Theorem also holds for the degenerate pair $(L_0,L_1)$.
\end{proof}

\section{Proof of Theorem \ref{thm:barcode_lagr}: Links in Horseshoes and crossing energy}\label{sec:Theorem3}
\subsection{Restricting the Floer chain complex to  link complements.}\label{sec:prelim_surface} 
We now discuss restrictions of the Lagrangian Floer chain complexes to link complements if $M$ is a surface. By passing to the suspension flow $\varphi$ on $Y=S^1 \times M$, every finite union of periodic orbits $\mathcal{O}\subset M$ of $\psi$, that is, a finite invariant set of $\psi$, yields a link (or braid)  $\mathcal{L}= \mathcal{L}_{\mathcal{O}}\subset Y$ of periodic orbits of  $\varphi$, and vice versa,  see \S \ref{sec:mappingcyl}.  
For such a set $\mathcal{O}\subset M$ and two closed embedded curves $L_0, L_1\subset M\setminus \mathcal{O}$, we write $\Lambda_i := \iota(L_i)$, $i=0,1$, where $\iota:M \to Y, x \mapsto (0,x)$, and 
 denote by  $\mathcal{Z}^+(Y,\mathcal{L}_{\mathcal{O}},\Lambda_0,\Lambda_1)$ the set of  homotopy classes of paths from $\Lambda_0$ to $\Lambda_1$ in $Y\setminus \mathcal{L}_{\mathcal{O}}$ that are everywhere transverse to the horizontal surfaces $\{t\} \times M$. Two  isotopies $(L^t_i)_{t\in [0,1]}$, $i=0,1$,  supported in  $M\setminus \mathcal{O}$ from $L_i$ to $L_i'$ define a bijection 
\begin{align*}\Upsilon = \Upsilon^{L_1,L'_1}_{L_0,L'_0}:\mathcal{Z}^+(Y,\mathcal{L}_{\mathcal{O}},\Lambda_0,\Lambda_1)\to \mathcal{Z}^+(Y,\mathcal{L}_{\mathcal{O}},\Lambda'_0,\Lambda'_1), \end{align*}
by sending a path $\hat{\eta}:[0,R] \to Y$ to the glued path $(\iota \circ \overline{\alpha})\#\hat{\eta}\# (\iota \circ \beta)$, where $\alpha$ resp.\  $\beta$ are paths in $M$ obtained by restricting the isotopy of $L_0$ resp.\ $L_1$ to the point $\eta(0)\in L_0$ resp.\ $\eta(1) \in L_1$, and $\overline{\alpha}$ is the reverse path of  $\alpha$.  
If $\mathcal{O} \neq \emptyset$ 
 and $L_0, L_1$ are non-contractible in $M\setminus \mathcal{O}$, then the map $\Upsilon^{L_1,L'_1}_{L_0,L_0'}$ does not depend on the choice of isotopies. Write $\Upsilon_{L_0,L'_0} = \Upsilon^{L_1,L_1}_{L_0,L'_0}$ and $\Upsilon^{L_1, L'_1} = \Upsilon^{L_1,L'_1}_{L_0,L_0}$. Then,   $\Upsilon^{L_1,L'_1}_{L_0,L'_0} = \Upsilon^{L_1,L'_1}\circ \Upsilon_{L_0,L'_0}$.

Let $(L_0,L_1)$ be an admissible pair. Note that if $M\neq S^2$, then any closed non-contractible curve $L$ is monotone, and if $M=S^2$, then any  equator $L$ is, which means that $L$ divides $S^2$ into two components of equal area. 
In the special situation of surfaces, we can define chain complexes  $(\CF^{\rho}(L_0,\psi^{-k}(L_1)),\partial^{\rho})$ associated to an element $\rho\in \mathcal{Z}^+(Y,\mathcal{L}_{\mathcal{O}},\Lambda_0,\Lambda_1)$.
This construction is analogous to that of Legendrian contact homology in link complements in \cite{Alves-thesis}, see also \cite[\S 5]{AlvesPirnapasov}, although arguments simplify in the setting of Lagrangian Floer homology. We also refer to \cite{BraidFloer} (Hamiltonian Floer homology in braid complements) and \cite{Momin} (cylindrical contact homology in link complements) for related homology  theories.
For  a pair $\hat{x} = (x,[v_x])\in \widetilde{\mathcal{P}}_k$, we use the notation $\hat{x}\in \rho$ if the curve $t\mapsto ([kt],\phi^{kt}_H(x)) \in S^1 \times M$, $t\in [0,1]$, represents $\rho$. 
Let $\CF^{\rho}(L_0,\psi^{-k}(L_1))\subset\CF(L_0,\psi^{-k}(L_1))$ be the subspace generated by $\hat{x} \in \rho$,  and define $\partial^{\rho}:\CF^{\rho}(L_0,\psi^{-k}(L_1)) \to \CF(L_0,\psi^{-k}(L_1))$ in the same way as $\partial$ with the additional requirement that for  
the holomorphic curves $u$ that appear in the definition, the map $\widetilde{u}$ does not intersect the union of the trivial cylinders over $\mathcal{L}_{\mathcal{O}}$, that is, it does not intersect the set 
$\mathrm{Cyl}_{\mathcal{O}}:=\{(s,t, \phi_H^t(p)) \, |\, s\in \R, t\in S^1, p\in \mathcal{O}\}\subset \R \times S^1\times M$. We note that the lift  $\widetilde{\mathrm{Cyl}_{\mathcal{O}}}$ of  $\mathrm{Cyl}_{\mathcal{O}}$ to $\R \times \R \times M$ is the image of a finite collection of $\widetilde{J}$-holomorphic curves.  In order that the definition of $\partial^{\rho}$ makes sense, one has to verify that the relevant moduli spaces, when restricting to  only those curves $u$ above, are compact in the $C^{\infty}_{\loc}$ topology. This follows from the $C^{\infty}_{\loc}$-compactness of the moduli-spaces without restriction and the positivity and stability of intersections of $\widetilde{J}$-holomorphic curves, see \cite{AlvesPirnapasov} for details of the argument.  
Note that for curves $u$ relevant for the definition of $\partial^{\rho}$, the map  $s\mapsto \overline{u}(s,t)$ provides a homotopy of its asymptotics in $Y\setminus \mathcal{L}_{\mathcal{O}}$ relative to $(\Lambda_0,\Lambda_1)$, hence in fact $\im (\partial^{\rho})\subset \CF^{\rho}(L_0,\psi^{-k}(L_1))$. 
Similarly, by using again the positivity of intersection property of $\widetilde{J}$-holomorphic cylinders, the argument that $\partial^2= 0$ carries over and one can show that $(\partial^{\rho})^2 = 0$. We denote the resulting homology by  $\HF^{\rho}(L_0,\psi^{-k}(L_1))$. The homology is invariant under Hamiltonian isotopy $L^t_0 = \phi^t_F(L_0)$, $t\in [0,1]$, from $L_0$ to $L'_0 = L^1_0$, as long as it is supported in $M\setminus \mathcal{O}$, which means that $\phi^t_F$ fixes $\mathcal{O}$ for all $t\in [0,1]$: restricting the count of holomorphic curves $u$ to curves such that $\widetilde{u}$ is disjoint from $\mathrm{Cyl}_{\mathcal{O}}$, defines a map $f_0^{\rho}:\CF^{\rho}(L_0,\psi^{-k}(L_1)) \to \CF^{\Upsilon(\rho)}(L_0',\psi^{-k}(L_1))$, where $\Upsilon= \Upsilon_{L_0,L_0'}$. And one can show, again using positivity and stability of intersections, that  $f^{\rho}$ is a chain map. With  $g_0^{\rho}:\CF^{\Upsilon(\rho)}(L'_0,\psi^{-k}(L_1)) \to \CF^{\rho}(L_0,\psi^{-k}(L_1))$ similarly defined,  $g_0^{\Upsilon(\rho)} \circ f_0^{\rho}$ is chain homotopic to the identity. Hence $f_0^{\rho}$ induces an isomorphism 
\begin{align}\label{iso_sigma}
\HF^{\rho}(L_0,\psi^{-k}(L_1)) \overset{\cong}{\longrightarrow} \HF^{\Upsilon(\rho)}(L'_0,\psi^{-k}(L_1)).
\end{align} 
Similarly, for a Hamiltonian isotopy $L^t_1$, $t\in [0,1]$, from $L_1$ to $L'_1 = L^1_1$ that is supported in $M\setminus \mathcal{O}$, we obtain an isomorphism 
\begin{align}\label{iso_sigma2}
\HF^{\rho}(L_0,\psi^{-k}(L_1)) \overset{\cong}{\longrightarrow} \HF^{\Upsilon(\rho)}(L_0,\psi^{-k}(L'_1)),
\end{align} 
where now $\Upsilon = \Upsilon^{L_1, L_1'}$. 
Finally, the above isomorphisms behave naturally under composition and are independent of the choice of isotopy.

\subsection{Horseshoes for surface diffeomorphisms and crossing energy}\label{sec:horseshoes_link_crossing}
In this section we discuss specific horseshoes $K$ in surface diffeomorphisms and obtain a uniform energy estimate of some Floer strips whose suspension crosses a link induced by certain horseshoe orbits in $K$.

Let  $\psi: M \to M$ be a surface  diffeomorphism with $h_{\topo}(\psi)>0$. Then, by Katok's theorem \cite[Thm.~S.5.9]{Hasselblatt-Katok}, for any $e>0$ with $e<h_{\topo}(\psi)$, there exists a locally  maximal hyperbolic set  $K$ such that \begin{align}\label{eq:entropy}
h_{\topo}(\psi|_{K}) \geq h_{\topo}(\psi)-e.\end{align}
We now discuss how one can choose the sets $K$ more specifically.
By the construction of Katok and Mendoza  \cite[Thm.~S.5.9]{Hasselblatt-Katok}, there exists a collection of finitely many pairwise disjoint rectangles $D_1,\ldots, D_L$ in $M$ such that $K$ is obtained as the union $\bigcup_{m=0}^{N-1}\psi^m(K')$, where $K'=\bigcap_{j\in \Z} \psi^{Nj}({U})$ is the $\psi^N$-invariant set induced from the isolating neighbourhood $U= \bigcup_{j=1}^L{D}_j$. 
We write the rectangles $D_j$ as images of embeddings $\iota_j:[0,1]^2 \to M$, and the boundaries as $\partial D_j = v^j_-\cup v^j_+ \cup h^j_- \cup h^j_+$ with  $v^j_-:= \iota_j(\{0\}\times [0,1])$, $v^j_+:= \iota_j(\{1\}\times [0,1])$, $h^j_-:= \iota_j([0,1]\times \{0\})$, and $h^j_+:= \iota_j([0,1]\times \{1\})$. 
Denote by $\Sigma = \{\underline{a} = (a_j)_{j\in \Z}, a_j\in \{\Omega_1, \ldots, \Omega_L\}\}$ the set of bi-infinite sequences in $L$ symbols $\{\Omega_1,\ldots, \Omega_L\}.$ For any $i\neq j$,  $\psi^{N}(D_j)$ intersects $D_i$, and for any element $\underline{a}= (a_j)_{j\in \Z} = (\Omega_{i_j})_{j\in \Z} \in \Sigma$, there exists a unique point $\pi(\underline{a})=\bigcap_{j\in\Z} \psi^{-jN}(D_{i_j})\in K'$ which defines a continuous map $\pi:\Sigma \to K'$ with $\pi \circ \sigma = \psi^N|_{K'} \circ \pi$. 
Moreover, 
 for a sequence $\underline{a} = (a_{j})_{j\leq 0} = (\Omega_{i_j})_{j\in\Z}$, $$l_0 = l_0(\underline{a}) := \bigcap_{j\leq 0} \psi^{-jN}(D_{i_j})$$  is a segment in $D_{i_0}$ that connects $v^{i_0}_{\pm}$ without  intersecting $h^{i_0}_{\pm}$ and is contained in  a local unstable manifold of a point $q\in K'$. Similarly,  for a sequence $\underline{b} = (b_{j})_{j\geq 0} = (\Omega_{i'_j})_{j\in \Z}$,   $$l_1 = l_1(\underline{b}) = \bigcap_{j\geq 0} \psi^{-(j-1)N}(D_{i'_j})$$ is a segment in $\psi^N(D_{i'_{0}})$ that connects $\psi^N(h^{i'_{0}}_{\pm})$ without intersecting $\psi^N(v^{i'_{0}}_{\pm})$ and is contained in a local stable manifold of some point $p \in K'$.
Fix $\underline{a} = (a_{j})_{j\leq 0}$ and $\underline{b} = (b_{j})_{j\geq 0}$. Write $a_{0} = \Omega_{i_0}$,  $D^0:=D_{i_0}$, $b_0 = \Omega_{i'_0}$, $D^1:=D_{i'_0}$.   For any $(n-2)$-tuple  $(i_1,\ldots, i_{n-2})$, $n\geq 3$,  consider the unique point $x=x_{(i_1,\ldots, i_{n-2})}\in M$,   
$$x := l_0 \cap  \bigcap_{j=1}^{n-2} \psi^{-jN}(D_{i_j}) \cap \psi^{-(n-1)N}(\psi^{-N}(l_1)) = \pi(\underline{c}),$$ 
where $\underline{c} = (c_j)_{j\in \Z}$ is given by  
\begin{align}
c_j = \begin{cases} a_j,  &\text{ if } j\leq 0, \\                                       \Omega_{i_j}, &\text{ if } 1\leq j\leq n-2, \\
b_{j-(n-1)},  &\text{ if } j\geq n-1.
    \end{cases}
\end{align}
Let $C_{(i_1,\ldots, i_{n-2})}:[0,nN] \to S^1 \times M$ be  the $\varphi$-chord from $\lambda_0 = \{0\} \times l_0$ to $\lambda_1 = \{0\} \times l_1$ given by $C_{(i_1,\ldots, i_{n-2})}(t):=\varphi^t(x_{(i_1,\ldots,i_{n-2})})$.

As we obtained in \cite{Meiwes_horseshoes} (in a more general setting),  by choosing $K$ and $U= \bigcup_{j=1}^L D_j$ suitably we can assume that additionally  
\begin{enumerate}[(I)]
\item  $\psi^m({U}) \cap {U} = \emptyset$, for all $m=1, \ldots, N-1, N+1,\ldots, 2N-1$; 
\item $\psi^N(v^j_{\pm})\cap D_i = \emptyset$, for all $i,j\in \{1, \ldots, L\}$;
\item  either $\psi^{2N}(v^i_{\pm}) = h^i_{\pm}$ or $\psi^{2N}(v^i_{\pm}) = h^i_{\mp}$, for all $i\in\{1, \ldots, L\}$.
\end{enumerate}
By (III), for each $i=1, \ldots, L$, we can glue $D_i$ to the two surfaces $E^i_{\pm} = \{([t], \varphi^t(x)\, |\, t\in [0,2N], x\in v^i_{\pm}\}$ and obtain a piecewise immersed pair-of-pants $F_i$ in $S^1 \times M$. By (I) and (II), the surfaces $F_i$,  $i=1, \ldots, L$, are in fact (piecewise) embedded and pairwise disjoint.  The boundary of $F_i$ is a link $\mathcal{L}_i = \mathcal{L}_{\mathcal{O}_i}$, a lift of a collection of three periodic orbits $\mathcal{O}_i$ given by iterates of the corners of $D_i$ (there are two corners of $D_i$ that belong to the same orbit).  We equip  $F_i$ with the orientation induced by the vector field of $\varphi$ along $D_i$.  
Let $\mathcal{O}= \bigcup_{i=1}^L \mathcal{O}_i$, $\mathcal{L}=\mathcal{L}_{\mathcal{O}}= \bigcup_{i=1}^L \mathcal{L}_i$. Fix $l_0$ and $l_1$ as above.  
\begin{lem}\label{lem:choice_of_L}
Assume $L_0$ and $L_1$ are closed curves in $M\setminus \mathcal{O}$ such that $L_0 \cap D^0 = l_0$, $L_1 \cap \psi^N(D^1) = l_1$. Then, for any tuple  $(i_1,\ldots, i_{n-2})$, the chord $C_{(i_1,\ldots, i_{n-2})}$ is the unique $\varphi$-chord in its homotopy class of paths from $\Lambda_0 = \{0\} \times L_0$ to  $\Lambda_1 = \{0\} \times L_1$ in $Y\setminus \mathcal{L}$ relative to $(\Lambda_1, \Lambda_2)$. 
\end{lem}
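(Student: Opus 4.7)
The plan is to use the geometric rigidity of the pair-of-pants surfaces $F_i$ under the suspension flow $\varphi$ to reduce the uniqueness statement to the Markov property of the partition $(D_j)_j$. My first observation is that each $F_i = D_i \cup E^i_+ \cup E^i_-$ is $\varphi$-tangent along $E^i_\pm$ (by definition of those cylinders as flow cylinders over $v^i_\pm$) and $\varphi$-transverse along the interior of $D_i \subset \{0\}\times M$. Consequently, a $\varphi$-chord $\tilde C(t) = \varphi^t((0,\tilde y))$ with $\tilde y\in L_0$ can meet $F_i$ only at times $t\in \Z$ for which $\psi^t(\tilde y)\in D_i$, and by property (I) any two such times must differ by a multiple of $N$. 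Hence the intersection pattern of $\tilde C$ with the collection $\{F_i\}$ is encoded by the symbolic itinerary $(\psi^{jN}(\tilde y))_{j=0}^{n-1}$, and because the flow pierces each $F_i$ in a single direction (determined by the chosen orientation), all intersections are of the same sign.

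Next, I would exploit the topology of $Y\setminus \mathcal{L}$. Since $\partial F_i \subset \mathcal{L}$ and the $F_i$ are pairwise disjoint, each $F_i$ is a properly embedded oriented surface in $Y\setminus \mathcal{L}$. For a path $\eta$ from $\Lambda_0$ to $\Lambda_1$ that is everywhere transverse to the horizontals $\{t\}\times M$, I would track not only the total signed intersection number with each $F_i$ but also the ordered sequence of crossings, indexed by the parameter $t\in [0,nN]$. I claim this ordered crossing data is an invariant of the class in $\mathcal{Z}^+(Y,\mathcal{L}_{\mathcal{O}},\Lambda_0,\Lambda_1)$. Admissible homotopies, being transverse to horizontals at every stage, cannot reverse the temporal order of two crossings (crossings can meet only by traveling backward in time, which is forbidden), and they cannot cancel pairs of crossings with a given $F_i$ because all such crossings carry the same sign. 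Arguments of this type appear, in closely related settings, in \cite{Meiwes_horseshoes} and in the link-complement contact-homology constructions of \cite{Alves-thesis,AlvesPirnapasov}.

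Combining these two steps, any $\varphi$-chord $\tilde C$ homotopic to $C := C_{(i_1,\ldots,i_{n-2})}$ in $Y\setminus \mathcal{L}$ rel $(\Lambda_0,\Lambda_1)$ must realise the same ordered crossing sequence, which translates to $\psi^{jN}(\tilde y)\in D_{i_j}$ for $j=0,1,\ldots,n-1$ and $\psi^{nN}(\tilde y)\in l_1\subset \psi^N(D^1)$. The endpoint motion along $\Lambda_0$ furthermore keeps $\tilde y$ in the arc $L_0\cap D^0 = l_0$ (a jump to a different component of $L_0$ would necessitate a crossing of $\partial F_{i_0}\subset \mathcal{L}$). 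By the Markov structure of $(D_j)_j$, the set
\[
l_0 \cap \bigcap_{j=1}^{n-2}\psi^{-jN}(D_{i_j}) \cap \psi^{-(n-1)N}(\psi^{-N}(l_1))
\]
is a transverse intersection of the one-dimensional unstable segment $l_0$ with the one-dimensional stable segment $\psi^{-(n-1)N}(\psi^{-N}(l_1))$, restricted by the admissibility conditions imposed by the intermediate rectangles, and consists of the single point $x_{(i_1,\ldots,i_{n-2})}$. Hence $\tilde y = x_{(i_1,\ldots,i_{n-2})}$ and therefore $\tilde C = C$.

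The main obstacle is the middle step, the identification of the homotopy class in $\mathcal{Z}^+$ with the ordered itinerary of rectangles. A priori one only gets abelian (intersection number) invariants from the surfaces $F_i$, but here the extra information from the transversality to horizontals, combined with the $\varphi$-tangency of $E^i_\pm$, should rigidify the situation enough that ordering is preserved. I expect the cleanest way to carry this out is to verify that admissible homotopies deform the crossings continuously in time (each crossing at some $t\in\Z\cap [0,nN]$ simply slides within $D_{i_j}$), and to appeal to the detailed Markov argument of \cite{Meiwes_horseshoes} to check that no admissible homotopy can change the occupied rectangle at any integer time.
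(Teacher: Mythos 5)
Your overall strategy coincides with the paper's: extract from the homotopy class in $Y\setminus\mathcal{L}$ the symbolic itinerary of the chord through the surfaces $F_1,\ldots,F_L$, and then invoke the Markov structure to identify the chord with $C_{(i_1,\ldots,i_{n-2})}$. However, the middle step --- that the \emph{ordered} crossing sequence with the $F_i$ is an invariant of the class in $\mathcal{Z}^+$ --- is exactly where your argument has a genuine gap, and the two claims you use to support it are false as stated. First, a $\varphi$-chord does not meet $F_i$ only at integer times inside $D_i$: the surfaces contain the flow cylinders $E^i_{\pm}$, so the chord can run \emph{tangentially} along $E^i_{\pm}$ for a whole interval of (non-integer) times, e.g.\ whenever $\psi^{m}$ of its starting point lies on $v^i_{\pm}$ for some integer $m$ (possibly negative but within range); in that case the ``crossing data'' is not even transversally defined. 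Second, transversality to the horizontal surfaces $\{t\}\times M$ fixes the sign only of crossings through the horizontal pieces $D_i$; a path transverse to horizontals can pierce the vertical pieces $E^i_{\pm}$ from either side, so crossings with a given $F_i$ do \emph{not} all carry the same sign. Consequently opposite-sign pairs of crossings can be created or cancelled during an admissible homotopy (and births/deaths also defeat the ``order cannot be reversed'' argument), so the full ordered crossing sequence is not obviously preserved. Deferring this point to ``the detailed Markov argument of \cite{Meiwes_horseshoes}'' is precisely deferring the heart of the proof.

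The paper circumvents this by proving only a weaker, genuinely homotopy-stable statement and then feeding in the dynamics quantitatively. From the homotopy one extracts times $0\leq t_0<\cdots<t_n\leq nN$ with $\hat{C}(t_j)\in F_{i_j}$ such that, whenever two consecutive visits are to the same surface, the intermediate segment of $\hat{C}$ is \emph{not} homotopic rel endpoints into that surface (essential crossings persist; no signs are needed). Then properties (I)--(III) are used to show that any two consecutive such times differ by at least $N$: this is the step where the specific geometry of the pair-of-pants $F_i$ and of the return dynamics enters. Since the competing chord must have length exactly $nN$ (the $S^1$-degree of a path with endpoints on $\Lambda_0,\Lambda_1$ is a homotopy invariant --- a point you use implicitly without comment), the pigeonhole forces $t_m=mN$ for all $m$, the chord visits $D_{i_0},D_{i_1},\ldots$ at exactly the integer times $mN$, its starting point lies in $L_0\cap D^0=l_0$ and its endpoint in $l_1$, and the Markov uniqueness you correctly describe at the end then yields $\hat{C}=C_{(i_1,\ldots,i_{n-2})}$. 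To repair your write-up you would either have to prove the invariance of the refined crossing data you postulate (which, because of the $E^i_{\pm}$ issues above, is the hard and unproved part), or replace it by the essential-intersection plus time-gap argument just described.
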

\begin{proof}
Let $\hat{C}:[0,nN] \to Y$ be a $\varphi$-chord from $\Lambda_0$ to $\Lambda_1$ homotopic to $C_{(i_1,\ldots, i_{n-2})}$ in $Y\setminus \mathcal{L}$ relative to $(\Lambda_1,\Lambda_2)$. Then there exists $0\leq t_0 < \cdots< t_{n} \leq nN$ with $\hat{C}(t_i) \in F_{i_j}$ and such that 
for all $0\leq m\leq n-1$ with $i_{m} = i_{m+1}$ the path $\hat{C}|_{[t_m,t_{m+1}]}$ is not homotopic relative to its endpoints to a path contained inside $F_{i_m}$. 
Hence, by (I)-(III) above, for any $0\leq m \leq n-1$,  $|t_{m+1} -t_m| \geq N$. Hence $t_{m} = mN$, and $\hat{C} = C_{i_1, \ldots, i_{n-2}}$ by construction. 
\end{proof}
Fix a non-degenerate, admissible pair $(L_0,L_1)$ that satisfies the assumptions of Lemma \ref{lem:choice_of_L}. Additionally assume that the algebraic intersection number of the curves $\Lambda_0 = \{0\} \times L_0$ and $\Lambda_1 = \{0\}\times L_1$  with any of the surfaces $F_1, \ldots, F_L$ is zero. This holds for example if $L_i$, $i=0,1$, intersect  those surfaces only in $U$ and $\psi^N(U)\cap \bigcup_{i=1}^L F_i$, and intersect the connected components of $U$ and $\psi^N(U)$ in segments that connect opposite edges.     
Let $\mathcal{Z}^+_1(l_0,l_1) \subset \mathcal{Z}(Y, \mathcal{L}_{\mathcal{O}}, \Lambda_1, \Lambda_2)$
be the set of homotopy classes that can be represented by the chords $C_{(i_1,\ldots, i_{n-2})}$ of $\varphi$ of the form above.

\begin{prop}\label{prop:crossing_energy}
Let $(L'_0,L'_1)$ be a non-degenerate admissible pair of curves in $M\setminus \mathcal{O}$ such that  $L'_i$ and $L_i$ are Hamiltonian isotopic in $M \setminus \mathcal{O}$, $i=0,1$. Assume $J=J_t$ is of the form  \eqref{specialJ}. 
Let $\Upsilon=\Upsilon^{L_1, L'_1}_{L_0,L'_0}$ be defined as in \S  \ref{sec:prelim_surface}. There is $\varepsilon>0$ such that if $u\in \mathcal{M}_{J_{kt}}(\hat{x}_-, \hat{x}_+, L'_0,\psi^{-k}(L'_1))$,  $k\in \N$, satisfies   
\begin{enumerate}[(1)]
\item $\hat{x}_-\in \Upsilon(\rho)$ or $\hat{x}_+\in \Upsilon(\rho)$ for some $\rho \in \mathcal{Z}^+_1(l_0,l_1)$, and
\item $\im (\overline{u}(s,\cdot)) \cap \mathcal{L}_{\mathcal{O}} \neq \emptyset\, $  for some $s\in \R$, 
\end{enumerate}
then 
$$E(u)\geq \varepsilon.$$
\end{prop}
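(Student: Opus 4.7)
The plan is a proof by contradiction, mirroring the strategy of the proof of Theorem \ref{mainthm:higherdim} but replacing the rigidity from transverse intersections of local stable and unstable disks by the homotopy-class rigidity of Lemma \ref{lem:choice_of_L}. Suppose, toward a contradiction, that there is a sequence $u_n \in \mathcal{M}_{J_{k_n t}}(\hat{x}^n_-, \hat{x}^n_+, L'_0, \psi^{-k_n}(L'_1))$ satisfying (1) and (2) with $E(u_n) \to 0$; relabelling if necessary, assume $\hat{x}^n_+ \in \Upsilon(\rho_n)$ for some $\rho_n \in \mathcal{Z}^+_1(l_0, l_1)$.

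The first step is to invoke the pseudo-orbit property. Choose $\eta>0$ smaller than the expansivity constant $\delta^* = \delta^*(K)$ of the ambient hyperbolic set $K$ (of which $\mathcal{O}\subset K'$ is a subset) and than $\mathrm{dist}(\mathcal{O}, L'_0 \cup L'_1)$. By Corollary \ref{cor:pseudo_2}, for $n$ large every path $t \mapsto \hat{u}_n(s, t)|_{[0, k_n]}$ is an $\eta$-pseudo-orbit of $\psi$. Condition (2) supplies $s_n \in \R$, $t_n \in [0, k_n]$ with $\hat{u}_n(s_n, t_n) = \phi_H^{t_n}(p_n)$, $p_n \in \mathcal{O}$; passing to a subsequence, $p_n \equiv p$. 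As in the proof of Theorem \ref{mainthm:higherdim}, the shadowing theorem applied in an isolating neighbourhood $U$ of $K$, together with expansivity, forces the pseudo-orbit $\hat{u}_n(s_n, \cdot)$ to be shadowed by the periodic orbit $\mathcal{O}(p) \subset \mathcal{O}$ on any subinterval on which it remains in $U$. Since the endpoints of this pseudo-orbit lie on $L'_0$ and $L'_1$, bounded away from $\mathcal{O}$, the pseudo-orbit must leave $U$.

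The second step extracts rigidity from the asymptotic class. Via the bijection $\Upsilon^{-1}$ induced by an isotopy from $L'_i$ to $L_i$ in $M \setminus \mathcal{O}$, the asymptote $\hat{x}^n_+$ corresponds to the unique $\varphi$-chord representing $\rho_n$, namely $C_{(i_1,\ldots,i_{k_n/N-2})}$ from Lemma \ref{lem:choice_of_L}. By the construction of the Markov rectangles and properties (I)--(III), this chord is bounded away from $\mathcal{O}$ by a fixed distance $r>0$, uniformly in the tuple $(i_1,\ldots,i_{k_n/N-2})$. Carrying over the Theorem \ref{mainthm:higherdim} argument to the regime $s \to +\infty$, where $\hat{u}_n(s, \cdot)$ is $C^0$-close to the orbit segment representing $\hat{x}^n_+$, shows that $\hat{u}_n(s, \cdot)$ avoids the $r/2$-neighbourhood of $\mathcal{O}$ for $s$ large, whereas by construction $\hat{u}_n(s_n, t_n) \in \mathcal{O}$. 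By positivity of intersection between the $\widetilde{J}$-holomorphic map $\widetilde{u}_n$ and the trivial cylinders $\widetilde{\mathrm{Cyl}}_{\mathcal{O}}$, each such crossing contributes at least $+1$ to a well-defined algebraic intersection number; combining this with a rescaling around the intersection point and the monotonicity formula for $\widetilde{J}$-holomorphic curves then produces a uniform lower bound on $E(u_n)$ in terms of $r$, contradicting $E(u_n)\to 0$.

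The main obstacle will be the final step: making the monotonicity-based lower bound genuinely uniform in $n$. The difficulty is that the domain of $\widetilde{u}_n$ grows as $k_n \to \infty$ and that, a priori, the crossing with $\widetilde{\mathrm{Cyl}}_{\mathcal{O}}$ could be localized in a shrinking region of $\widetilde{u}_n$; one must exclude, using the shadowing/expansivity machinery in combination with the uniform distance $r$ from $\mathcal{O}$ supplied by (I)--(III), that any Gromov-type limit of rescalings of $\widetilde{u}_n$ near the crossing stays trapped inside a diameter-vanishing ball around a point of $\mathcal{L}_{\mathcal{O}}$. Controlling this requires that the pseudo-orbit property from Corollary \ref{cor:pseudo_2} be propagated along the whole $s$-family $\hat{u}_n(s, \cdot)$ rather than only in bounded windows, so that the transition from "avoiding $\mathcal{O}$" at $s=+\infty$ to "meeting $\mathcal{O}$" at $s=s_n$ is macroscopic rather than microscopic, forcing an energy contribution bounded below uniformly in $n$.
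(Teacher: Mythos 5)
There is a genuine gap, and it sits exactly where you flag your ``main obstacle'': the conversion of a crossing of $\mathcal{L}_{\mathcal{O}}$ into a uniform energy bound is never achieved, and the mechanism you propose cannot achieve it. Positivity of intersection of $\widetilde{u}$ with the holomorphic cylinders over $\mathcal{O}$ only gives a nonzero local intersection multiplicity; a transverse crossing of a holomorphic hypersurface can carry arbitrarily little energy, so no monotonicity/rescaling argument based on the crossing alone can produce an $n$-independent lower bound. In the paper, positivity of intersections is used only to define the restricted complexes $\CF^{\rho}$, not to bound energy. A second, independent problem is your step asserting that for $s$ large the slice $\hat{u}_n(s,\cdot)$ stays a distance $r/2$ from $\mathcal{O}$: you identify the asymptote $\hat{x}^n_+$ with the unique $\varphi$-chord $C_{(i_1,\ldots,i_{n-2})}$ of Lemma \ref{lem:choice_of_L}, but that uniqueness statement concerns the adapted pair $(L_0,L_1)$; the asymptote of $u_n$ is an intersection point of $L'_0$ with $\psi^{-k_n}(L'_1)$, and $L'_0,L'_1$ are only Hamiltonian isotopic to $L_0,L_1$ in $M\setminus\mathcal{O}$ — their chords in the class $\Upsilon(\rho)$ need not lie near the suspension of $K$ nor avoid any fixed neighbourhood of $\mathcal{O}$. (Your Step 1 is also shaky: the shadowing theorem does not identify the shadowing orbit with $\mathcal{O}(p)$, and ``the pseudo-orbit must leave $U$'' does not follow, since $L'_0,L'_1$ may well intersect $U$; but this step is in any case not what the proof needs.)

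The paper's proof is of a different nature: quantitative counting rather than limits, shadowing, or monotonicity. One fixes the algebraic intersection number $\kappa$ with the pair-of-pants surfaces $F_1,\ldots,F_L$, which is a homotopy invariant in $Y\setminus\mathcal{L}_{\mathcal{O}}$ (well defined because $\Lambda_0,\Lambda_1$ have zero intersection number with the $F_j$, and controlled under $\Upsilon$ up to a constant $\chi$). Taking the extremal slice $s_0$ at which $\overline{u}(s_0,\cdot)$ first meets $\mathcal{L}_{\mathcal{O}}$, the class $\Upsilon(\rho)$ forces at least $n-\chi$ essential crossings of $\bigcup_j F_j$ along a path of time-length $k=nN$. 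Corollary \ref{cor:pseudo_2}, applied with the explicit window $B=4N(\chi+3)$ and $\eta$ below the separation constants furnished by (I)--(III), shows that consecutive crossings are separated in time by at least $N$ (by $2N$ if they occur on $F_j\setminus D_j$), while near the time at which the slice touches $\mathcal{L}_{\mathcal{O}}$ it tracks a corner orbit for a window of length about $B$, during which crossings of $\bigcup_j D_j$ can occur only roughly every $2N$. Counting crossings inside and outside this window yields incompatible bounds ($\widehat{T}\geq n-4\chi-7$ versus $\widehat{T}<n-4\chi-8$), which is the desired contradiction with $E(u)<\varepsilon$ for the explicit $\varepsilon=\varepsilon(\eta,B)$. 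This combinatorial step is the actual content of the proposition, and it is absent from your proposal.
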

\begin{proof}
Given a homotopy class $\rho$ of paths from $\Lambda_0 =\{0\}\times L_0$ to $\Lambda_1 = \{0\}\times L_1$ in $Y\setminus \mathcal{L}$, denote by $\kappa(\rho)$ 
 the maximal algebraic intersection number of a path in ${\rho}$  with $\bigcup_{i=0}^L F_i$. Here we naturally only consider transverse paths to $\bigcup_{i=0}^L F_i$ and also count intersections at the endpoint of the paths. By our assumption that the algebraic intersection number of $\Lambda_i$, $i=1,2$, with any of the surfaces $F_1, \ldots, F_L$ vanishes, the number  $\kappa(\rho)$ is well defined. 
Analogously define $\kappa'(\rho')$ 
for any homotopy class $\rho'$ of paths from $\{0\} \times L_0'$ to $\{0\} \times L'_1$ in $Y\setminus \mathcal{L}$. Fix some $\chi=\chi(L_0', L_1') \geq 0$ such that for any  $\rho \in \mathcal{Z}^+(Y,\mathcal{L}, \Lambda_0,\Lambda_1)$,
\begin{align}\label{eq:eta00} \kappa'(\Upsilon(\rho)) \geq \kappa(\rho) - \chi.
\end{align}

 \color{black}

Let $B:=4N(\chi +3)$. Given $\eta>0$, we set $D^{\eta}_i:=\{x\in M\, |\, \dist(x,D_i) < \eta\}$, $1\leq i\leq L$.  By (I) and (II) we can choose $\eta>0$ such that \begin{enumerate}[(a)] 
\item $\eta < \dist(D^{\eta}_i,\psi^n(D^{\eta}_j))$,  for all  $n=1, \ldots, N-1, N+1, \ldots, 2N-1$ and $i,j\in\{ 1,\ldots, L\}$;
\item $\eta < \dist(D^{\eta}_i,D^{\eta}_j)$, for all $i\neq j\in \{ 1,\ldots, L\}$;
\item $\eta < \dist(\phi_H^{t+n}(v^i_{\pm}), \phi_H^{t}(v^j_{\pm}))$,  for all     $t \in [0,2N],  n\in \Z\setminus \{0\}$, $t+ n \in [0,2N)$, and $i,j\in\{1,\ldots, L\}$;
\item $\eta< \dist(\phi^N_H(v^{i}_{\pm}), D_j)$, for all $i, j\in \{1,\ldots, L\}$;
\item $\eta < \dist(\phi_H^{t}(v^i_{\pm}), \phi_H^{t}(v^j_{\pm}))$,  for all $t \in [0,2N]$,  $i\neq j\in\{1,\ldots, L\}$;
\color{black}
%\item $\eta < \dist(\phi_H^t(v^i_{\pm}), \phi_H^{t}(D_j))$, for all  $t\in[0,2N]$ and $i,j\in\{1,\ldots, L\}$, $i\neq j$.
\end{enumerate}
Choose $\varepsilon=\varepsilon(\eta,B)>0$ according to Corollary~\ref{cor:pseudo_2}.
We assume the contrary and let $u\in \mathcal{M}_{J_{kt}}(\hat{x}_-,\hat{x}_+, L_0',\psi^{-k}(L'_1))$, $k\in \N$,  
 such that $(1)$ and $(2)$ hold, but $E(u)<\varepsilon$.
Assume that $\hat{x}_+ \in \rho'=\Upsilon(\rho)$ for some $\rho = [C_{(i_1,\ldots, i_{n-2})}] \in \mathcal{Z}^+_1$, the argument for the other case is analogous. 
Let $s_0\in \R$ be the minimal $s\in \R$ such that the path $t\mapsto \overline{u}(s,t)$ intersects $\mathcal{L}$. 
We consider the path $$\gamma:[0,k]=[0,nN]\to M, \quad \gamma(t) = \overline{u}(s_0,t).$$ 
Note that  $\gamma$  intersects $F_1, \ldots, F_L$ in at least $l:=n-\chi$ many points
$\gamma(t_1),$ $\ldots,$ $\gamma(t_l)$, $0\leq t_1<\cdots<t_l\leq k$. Moreover,  we can choose such points so that 
if $\gamma(t_i)$ and $\gamma(t_{i'})$, $i<i'$, lie in the same surface $F_j$, $j=1, \ldots, L$,  then the path $\gamma_{[t_i,t_{i'}]}$ is not homotopic relative to its endpoints to a path contained in $F_j$.

\begin{claim}\label{cl:F-D}
Let $1\leq i\leq l$. Assume that $\gamma(t_i) \in F_j/D_j$ for some $j\in \{1,\ldots, L\}$. Then, there is $t^{-}_i\in \Z$ with $t^-_i < t_i< t^-_i + 2N =:t^+_i$ such that $\gamma(t^-_i), \gamma(t^+_i) \in D^{\eta}_j$ (provided that  $t^-_i,t^+_i \in [0,k]$). Moreover,  $t_{i'} \notin [t^-_i,t^+_i]$ for all $i'\neq i$. 
\end{claim}
\color{black}
\begin{claimproof}
We write $\gamma(t_i)=  \varphi^{r}(x)$ with $x\in v_+^j \cup v_-^j$, $0\leq r<2N$, and put $t^-_i := t_i - r$, $t^+_i :=t^-_i + 2N$. 
By Corollary \ref{cor:pseudo_2}, for all $t\in [t^-_i,t^+_i]\cap[0,k]$ we have that  $\overline{d}(\gamma(t), \varphi^{t -t_{i}}(\gamma(t_i))) < \eta$, and hence 
\begin{align}\label{eta11}
\overline{d}(\gamma(t), \varphi^{t-t^-_i}(x)) < \eta.
\end{align}
This means that $\gamma(t^-_i), \gamma(t^+_i) \in D^{\eta}_j$ (provided that  $t^-_i,t^+_i \in [0,k]$). To show the second assertion, assume by contradiction that there is $i'\neq i$ such that $t_{i'}\in[t^-_i, t^+_i]$. With  \eqref{eta11}, it follows from (a), (b), and (d) that $\gamma(t_{i'}) \notin D_{j'}$ for all $j'\neq j$, and  it follows from (a), (c), (d), and (e) that $\gamma(t_{i'}) \notin F_{j'}\setminus D_{j'}$ for all  $j'\neq j$. Hence $\gamma(t_{i'}) \in F_j$. With \eqref{eta11}, it follows now from (a), (c), and (d)  that 
 $\gamma(t_{i'}) \in \varphi^{t_{i'} - t^-_i}(v^j_+ \cup v^j_-)$. Therefore, $\gamma|_{I}$ is  homotopic relative to its endpoints to a path in $F_j$, where $I$ is the interval with endpoints $t_i$ and $t_{i'}$. This is a contradiction.
\end{claimproof}

Let $1\leq i\leq l$. If $\gamma(t_i) \in F_j\setminus D_j$ for some $j\in \{1, \ldots, L\}$, we choose $t^{\pm}_i$ as in Claim \ref{cl:F-D}, and if $\gamma(t_i) \in D_j$ for some $j\in \{1, \ldots, L\}$,  we set $t^{-}_i = t^+_i :=  t_i\in \N$.
\begin{claim}\label{cl:D}
Let $1\leq i<i'\leq l$. Then $t^-_{i'} -t^+_i\geq N$.
\end{claim}
\begin{claimproof}
By the second assertion of Claim \ref{cl:F-D} we have  $t^-_{i'} > t^+_i$.  For any $n\in\{1, \ldots, N-1\}$ with $n+t^+_i\leq k$, it holds by Corollary~\ref{cor:pseudo_2} that   $\overline{d}(\gamma(n+t^+_i), \varphi^n(\gamma(t^+_i))) < \eta$, and hence by~$(a)$, $\gamma(n+t^+_i) \notin \bigcup_{j=1}^L D^{\eta}_j$. The claim follows by the first assertion of Claim \ref{cl:F-D}.
\end{claimproof}

We write $l=T_+ + T_-$, where  $T_+$  is  the number of $i=1,\ldots, l$ with  $\gamma(t_i) \in \bigcup_{j=1}^L D_j$, and  $T_-$ the number of
$i=1, \ldots, l$ with $\gamma(t_i) \in \bigcup_{j=1}^L F_j\setminus D_j$. 
By Claims \ref{cl:F-D} and \ref{cl:D}, 
$Nn = k\geq NT_+ + 3NT_- - 4N$. 
By \eqref{eq:eta00},  $T_+ + T_- \geq n-\chi \geq  T_+ + 3T_- -4- \chi$, hence $2T_- \leq \chi+4$. 
It follows that  
\begin{align}\label{eq:T_+geq}
T_+ \geq n-\frac{3\chi}{2} -2 \geq n-2\chi-2.
\end{align}
Choose now $\hat{t}\in [0,k]$ with $\gamma(\hat{t}) \in \mathcal{L}$, and let $I\subset [0,k]$ be any interval with $\hat{t}\in I$ and length $|I| = 2N(2\chi + 5)  = B-2N$.  
\begin{claim}\label{cl:L_0L_i}
If $\gamma(\hat{t}) \in \mathcal{L}_j$ for some $j\in 
\{1, \ldots, L\}$, then the number of $i\in \{1, \ldots, l\}$ with $t_i \in I$ and $\gamma(t_i) \in \bigcup_{j=1}^L D_j$ is smaller or equal than $2\chi+5$. 
\end{claim}
\begin{claimproof}
Write $\gamma(\hat{t}) = \varphi^r(x)$ with $x\in \mathcal{O}\cap D_j$, $0\leq r< 2N$, $j\in \{1, \ldots, L\}$, and write $t_* := \hat{t}-r$. By Corollary \ref{cor:pseudo_2}, $$\overline{d}(\gamma(t + r + t_* ), \varphi^{t+r}(x)) = \overline{d}(\gamma(t+\hat{t}),\varphi^t(\gamma(\hat{t}))) < \eta,$$
for all $t\in [-B, B]$ with $t+ \hat{t} \in [0,k]$. 
Note that $\varphi^q(x) \subset v^j_{+} \cup v^j_{-}$ if and only if $q\in 2N\Z$. Hence, by $(a)$ and $(d)$, the points $\gamma(t+r + t_*)$ for $t\in[-B, B]$ can only possibly lie in $\bigcup_{i=1}^LD_j$ if $t+r \in 2N\Z$. The claim follows. 
\end{claimproof}

We are now in the situation to finish the proof of the proposition. 
By Claim~\ref{cl:L_0L_i} and by  \eqref{eq:T_+geq}, the number $\widehat{T}$ of $i\in \{1, \ldots, l\}$ with $t_i\notin I$ and $\gamma(t_i) \in \bigcup_{j=1}^L D_j$ satisfies $$\widehat{T}\geq T_+-(2\chi+5)\geq n-4\chi-7.$$
On the other hand, again by Claim \ref{cl:D}, and with $[0,k]\setminus I =: I_1 \cup I_2$, 
$$\widehat{T} \leq \lceil |I_1|/N\rceil + \lceil |I_2|/N \rceil < \frac{1}{N}(k - |I|) + 2 \leq n - 4\chi-8.$$
This yields the desired contradiction.
\end{proof}
\begin{rem}\label{rem:epsilon_cont}
It follows from the proof and by the possible choices of $\epsilon$ in Lemma \ref{lem:dubound} and Corollary \ref{cor:pseudo_2}, that  $\epsilon$ as a function on $(L'_0,L'_1)$ in the above proposition can be chosen to be constant in a $C^2$-small neighbourhood of any previously given, and not necessarily non-degenerate pair $(L'_0,L'_1)$.   
\end{rem}

\subsection{Isolation in the chain complex and length of bars}
To deduce Theorem \ref{thm:barcode_lagr} from the crossing energy estimates above, we will additionally need a purely algebraic statement. It generalizes Prop.\ 3.8 in \cite{CGG_barcode}.    
To this end, fix $\mathcal{C}= \CF(L_0,\psi^{-k}(L_1))$, for some $k\in \N$. 
Consider now chain complexes  $(\mathcal{C}', \partial')$ for which $\mathcal{C}'\subset \mathcal{C}$ is a subspace generated by a subset of $\widetilde{\mathcal{P}}_k$. The remaining basis vectors generate the subspace $(\mathcal{C}')^{\perp}$ with $\mathcal{C}' \oplus (\mathcal{C}')^{\perp} = \mathcal{C}$.  
We say that  a chain complex $(\mathcal{C}',\partial')$ of this form is \textit{$\epsilon$-isolated in $(\mathcal{C},\partial)$} if the following holds:  
\begin{enumerate}[(i)]
\item if $y\in \mathcal{C}'\setminus \{0\}$,
then 
$\mathcal{A}(y) - \mathcal{A}((\partial -\partial') y)> \epsilon$;
\item if $y\in (\mathcal{C}')^{\perp}\setminus \{0\}, \zeta \in \mathcal{C}'$ with $\zeta + \partial y\in (\mathcal{C}')^{\perp}$, then $\mathcal{A}(y) - \mathcal{A}(\zeta) > \epsilon$.
\end{enumerate}
In the case that $\mathcal{C}'$ is one-dimensional, this definition coincides with the definition given in \cite{CGG_barcode}.  Also, the following reduces to the statement of Prop.\ 3.8 in \cite{CGG_barcode} if all complexes $\mathcal{C}'_i$ are one-dimensional. 
\begin{prop}\label{prop:algebaric_barcode}
Assume that $(\mathcal{C}_1',\partial'_1),\ldots, (\mathcal{C}'_p,\partial'_p)$ are chain complexes with non-zero homology that are  $\epsilon$-isolated in $(\mathcal{C},\partial)$, and for which $\mathcal{C}'_i \cap \mathcal{C}'_j = \{0\}$ if $i\neq j$. Then $b_{\epsilon}(\mathcal{C}) \geq p/2$.
\end{prop}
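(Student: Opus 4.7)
The plan is to extract, for each $i$, a cycle $\zeta_i$ representing a nontrivial class in $H(\mathcal{C}'_i,\partial'_i)$ and to argue that the collection $(\zeta_1,\dots,\zeta_p)$ identifies at least $p/2$ distinct bars of length $>\epsilon$ in the barcode of $(\mathcal{C},\partial)$. The factor $1/2$ arises because each finite bar has exactly two endpoint generators (birth and death), so the combinatorial assignment ``$\zeta_i\mapsto$ its bar'' can be $2$-to-$1$.

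First I would pick $\zeta_i\in\ker\partial'_i\setminus\im\partial'_i$. Since every $\mathcal{C}'_i$ is spanned by a subset of $\widetilde{\mathcal{P}}_k$ and the pairwise intersections $\mathcal{C}'_i\cap\mathcal{C}'_j$ are trivial, the generating subsets are disjoint, the sum $\mathcal{C}'_1+\cdots+\mathcal{C}'_p$ is direct, and the $\zeta_i$ are $\Lambda$-linearly independent, spanning a $p$-dimensional subspace $W\subset\mathcal{C}$. For any nonzero $\zeta=\sum c_i\zeta_i\in W$, condition (i) applied to each $c_i\zeta_i\in\mathcal{C}'_i$ (whose $\partial'_i$-image vanishes) gives $\mathcal{A}(c_i\zeta_i)-\mathcal{A}(c_i\partial\zeta_i)>\epsilon$, and the non-Archimedean max-rule for $\mathcal{A}$ promotes this to $\mathcal{A}(\zeta)-\mathcal{A}(\partial\zeta)>\epsilon$. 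Thus every nonzero $\zeta\in W$ is an ``$\epsilon$-persistent cycle'' in $(\mathcal{C},\partial)$.

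Next I would choose a singular-value / barcode basis $\{e_\alpha,f_\alpha,g_\beta\}$ for $(\mathcal{C},\partial)$ in the sense of Usher--Zhang, with $\partial f_\alpha=e_\alpha$, $\partial e_\alpha=\partial g_\beta=0$, and finite-bar lengths $\ell_\alpha=\mathcal{A}(f_\alpha)-\mathcal{A}(e_\alpha)$. Expanding $\zeta_i$ and reading off its highest-action term assigns $\zeta_i$ either to a finite bar $\alpha$ (leading term of type $f_\alpha$ or $e_\alpha$) or to an infinite bar $\beta$ (type $g_\beta$). In the $f_\alpha$-case the previous paragraph forces $\ell_\alpha\ge\mathcal{A}(\zeta_i)-\mathcal{A}(\partial\zeta_i)>\epsilon$. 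In the $g_\beta$-case the assigned infinite bar is automatically ``long.'' In the $e_\alpha$-case the leading part of $\zeta_i$ is a genuine cycle at level $\mathcal{A}(e_\alpha)$, and I would use condition~(ii) applied to any $f_\alpha$-representative killing $e_\alpha$: writing $f_\alpha=f_\alpha^{\mathcal{C}'_i}+f_\alpha^\perp$ and noting that $e_\alpha=\partial f_\alpha$ has nonzero $\mathcal{C}'_i$-component (coming from the $\zeta_i$ direction), (ii) forces $\mathcal{A}(f_\alpha^\perp)-\mathcal{A}(e_\alpha)>\epsilon$, hence $\ell_\alpha>\epsilon$.

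Finally, each finite bar $\alpha$ has only the two distinguished generators $e_\alpha,f_\alpha$ available as ``top action'' anchors for elements of $W$, so at most two distinct $\zeta_i$ can be assigned to the same finite bar, and at most one to any infinite bar. The assignment $\{1,\dots,p\}\to\{\text{bars of length}>\epsilon\}$ is therefore at most $2$-to-$1$, yielding $b_\epsilon(\mathcal{C})\ge p/2$. The main obstacle is the $e_\alpha$-case: turning condition~(ii) into the bar-length bound requires tracking how the barcode basis interacts with the decomposition $\mathcal{C}=\mathcal{C}'_i\oplus(\mathcal{C}'_i)^\perp$, which need not be compatible with the Usher--Zhang basis. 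One must use the freedom in choosing the barcode basis (or equivalently, reduce the problem to a careful linear-algebra identity comparing leading coefficients under the two decompositions) to ensure condition~(ii) actually applies to an element witnessing the $f_\alpha$-endpoint of the bar.
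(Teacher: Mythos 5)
Your strategy---extract one homology representative $\zeta_i$ per $\mathcal{C}'_i$, show every nonzero element of their span $W$ drops action by more than $\epsilon$ under $\partial$, and match leading terms against a Usher--Zhang basis with the factor $1/2$ coming from the two endpoints of a finite bar---is close in spirit to the paper's, and several steps are correct: the sum $\sum_i\mathcal{C}'_i$ is direct because the $\mathcal{C}'_i$ are spanned by disjoint subsets of a common generator basis, the estimate $\mathcal{A}(\zeta)-\mathcal{A}(\partial\zeta)>\epsilon$ for $\zeta\in W\setminus\{0\}$ follows from condition (i) as you say, and the $f_\alpha$- and $g_\beta$-cases of your assignment are sound. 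The genuine gap is exactly the $e_\alpha$-case you flag as ``the main obstacle'', and it is not a bookkeeping issue: the claim is \emph{false} for an arbitrary $\zeta_i\in\ker\partial'_i\setminus\im\partial'_i$. Take $\mathcal{C}=\mathcal{C}'_1$ with $\partial=\partial'_1$ (so (i) holds trivially and (ii) is vacuous, hence $\epsilon$-isolation for every $\epsilon$), generators $z,a,b$ with $\partial b=a$, $\partial a=\partial z=0$, and $\mathcal{A}(z)\ll\mathcal{A}(a)<\mathcal{A}(b)<\mathcal{A}(a)+\epsilon$. Then $\zeta_1=z+a$ represents the nonzero class $[z]$, its highest-action term in the barcode basis $\{e_1=a,\,f_1=b,\,g_1=z\}$ is $e_1$, yet the bar of $e_1$ has length $\mathcal{A}(b)-\mathcal{A}(a)<\epsilon$ (the proposition survives only via the infinite bar of $z$, which your assignment never sees; note also that here $f_1\in\mathcal{C}'_1$, so your ``$f_\alpha^\perp$'' vanishes and (ii) yields nothing). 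The cure is the paper's choice of $\xi_i$ of \emph{minimal action} among representatives of a nonzero class of $(\mathcal{C}'_i,\partial'_i)$: minimality rules out a cheap primitive of the leading part lying inside $\mathcal{C}'_i$ (via condition (i)), while condition (ii) rules out one with a component in $(\mathcal{C}'_i)^\perp$. This combined statement is the ``quasi $\epsilon$-robustness'' of $\xi_i$, whose contradiction argument is the core of the paper's proof and is precisely what your sketch defers.

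A second, smaller gap: the final count ``at most two $\zeta_i$ per finite bar, at most one per infinite bar'' is not true as stated, since several $\zeta_i$ may share the same leading barcode-basis vector. You need a Gaussian elimination inside $W$ (your persistence statement for all of $W$ is the right ingredient for it), after which the elements are combinations $\sum c_i\zeta_i$ and the $e_\alpha$-argument, which invokes condition (ii) for a specific index $i$, must be rerun for the index realizing the maximal action. For comparison, the paper avoids the barcode basis entirely: it splits $V=\ker(\partial|_V)\oplus Y$, corrects inductively to produce either an $\epsilon$-robust subspace $\partial(Y')\subset\im\partial$ or quasi-robust cycles, and gets $p/2$ from $\dim Y'+r+\dim\ker(\partial|_V)=p$. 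Your route can probably be completed, but the missing ingredients (action minimality, the interplay of (i) and (ii) in the $e_\alpha$-case, and the elimination) constitute the substance of the proof rather than details.
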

\begin{proof}
We give here a proof which closely follows that of Prop.\ 3.8 in \cite{CGG_barcode}. We can assume for simplicity that the actions of the generators of $\mathcal{C}$ are all different, see 
 the discussion in \cite{CGG_barcode}. 
For $i=1,\ldots, p$, let $\xi_i\in \mathcal{C}_i'$ be a vector that represents a non-zero class in the homology of $\mathcal{C}_i'$ and such that the action $\mathcal{A}(\xi_i)$ is minimal among all such vectors in $\mathcal{C}'_i$.
All $\xi_i$, $i=1,\ldots, p$, are \textit{quasi $\epsilon$-robust}: 
for any vectors $y,\tau\in \mathcal{C}$ with $\mathcal{A}(\tau) < \mathcal{A}(\xi_i)$ and $\partial y = \xi_i + \tau$, it holds that  $\mathcal{A}(y) - \mathcal{A}(\xi_i) > \epsilon$.
Indeed, assume the contrary, and let $y,\tau$ be as above such that $\mathcal{A}(y) - \mathcal{A}(\xi_i) \leq  \epsilon$. By the $\epsilon$-isolation property (ii), $y = u + v$ for a non-zero vector $u\in \mathcal{C}'_i$ and some $v\in (\mathcal{C}'_i)^{\perp}$. Also by (ii), $\mathcal{A}(\partial v) < \mathcal{A}(\xi_i)$, and hence by (i),  $\mathcal{A}(\partial_i'u - \xi_i) <\mathcal{A}(\xi_i)$. 
By the minimality property of $\mathcal{A}(\xi_i)$, 
we have that $\partial_i'u -\xi_i = 0$, contradicting  $\xi_i \notin \im \partial_i'$. 
Note that also  any $\zeta\in \mathcal{C}$ with $\mathcal{A}(\zeta) = \mathcal{A}(\xi_i)$ is quasi $\epsilon$-robust, as well as any  non-trivial linear combination of quasi $\epsilon$-robust vectors.

Let $V$ be the span of the vectors $\xi_i$, $i=1,\ldots, p$, and write $V = \ker (\partial|_V) \oplus Y$, for some $Y\subset V$. 
If now $\zeta\in Y$, then by the $\epsilon$-isolation property (i) and since $\xi_i\in\ker \partial'_i$, $i=1,\ldots, p$, it holds that $\mathcal{A}(\zeta) - \mathcal{A}(\partial \zeta) >\epsilon$. 
Hence, if $\partial \zeta$ is not $\epsilon$-robust, then there is a vector $u$ with $\mathcal{A}(u) < \mathcal{A}(\zeta)$ and $w:= \zeta + u \in \ker \partial$. Note that $w$ is quasi $\epsilon$-robust.   
One can now find by induction quasi $\epsilon$-robust vectors $w_1, \ldots, w_r$ and a subspace $Y' \subset Y$ such that $\dim Y' + r + \dim(\ker(\partial|_V))= p$ and such that $\partial( Y')$ is $\epsilon$-robust.
Since a subspace in $\im \partial $ that is generated by quasi $\epsilon$-robust vectors is $\epsilon$-robust, it follows as in \cite{CGG_barcode} that $b_{\epsilon}(\mathcal{C}) \geq \max \{ \dim Y',r + \dim(\ker(\partial|_V))\} \geq p/2$. 
\end{proof}
\begin{proof}[Proof of Theorem \ref{thm:barcode_lagr}]
By  Lemma \ref{lem:choice_of_L} and the discussion in  
 \S\ref{sec:horseshoes_link_crossing}, there exist a compact hyperbolic set $K$ with isolating neighbourhood $U=\bigcup_{i=1}^L {D}_i$, a union of periodic orbits $\mathcal{O}\subset M$ that contain the corners of $D_i$, and a choice of segments $l_0\subset D^0$, $l_1\subset \psi^N(D^1)$ for some connected components $D^0$ and $D^1$ of ${U}$ such that for any admissible  pair $(L'_0,L'_1)$ of closed curves in $M\setminus \mathcal{O}$ that intersect $D^0$ and $\psi^N(D^1)$ in  $l_0$ and $l_1$, respectively, we have 
$$\limsup_{k\to \infty} \frac{\log \# \mathcal{Z}_1^k}{k} = h_{\topo}(\psi|_K) \geq  h_{\topo}(\psi)-e,$$
where $\mathcal{Z}^k_1$ is the set of classes in $\mathcal{Z}_1^+(l_0,l_1)$  that are represented by chords of length $k$. Let us call curves $L'_0$ and $L'_1$ \textit{adapted}  if they have the intersection property above. 
Choose homotopy classes $\alpha_0$ and $\alpha_1$ of closed curves in $M\setminus \mathcal{O}$ that have representing curves $\Gamma_0$ and $\Gamma_1$ that are adapted, and such that $\{0\}\times \Gamma_i\in S^1 \times M$, $i=1,2$, have zero algebraic intersection number with any of the surfaces $F_1,\ldots, F_L$ considered in \S \ref{sec:horseshoes_link_crossing}.     
Let $(L_0, L_1)$ be any admissible pair of curves that lie in $M\setminus \mathcal{O}$ such that $[L_i] =[\alpha_i]$, $i=0,1$.  
We can find two Hamiltonian isotopies supported in $M\setminus \mathcal{O}$ from  $L_i$ to adapted curves $L'_i$, $i=0,1$. 
One way to see this is as follows.    
Write the boundary of $D^0$ as $\partial D^0 = v_{+} \cup v_{-} \cup h_{+}\cup h_{-}$ as in \S \ref{sec:horseshoes_link_crossing}. Note that the endpoints of $v_{\pm}$ and $h_{\pm}$ lie in $\mathcal{O}$, and denote by $\mathring{v}_{\pm}$, $\mathring{h}_{\pm}$ the open segments in the complement of $\mathcal{O}$. We can find an isotopy in $M\setminus \mathcal{O}$ from $L_0$ to a curve $L_0''$ that has minimal intersection (in its homotopy class) with $\mathring{v}_{\pm}$, $\mathring{h}_{\pm}$ (that is, it has one intersection with $v_-$ and with $v_+$,  and none with $h_{\pm}$), by successively cancelling bigons whose first boundary component is contained in the  curve and whose second is contained in  $\mathring{v}_+$, $\mathring{v}_-$, $\mathring{h}_+$, or $\mathring{h}_-$. A cancellation of a bigon is obtained by "moving" the boundary segment of the bigon contained in the curve over the respective arc  $\mathring{v}_+$, $\mathring{v}_-$, $\mathring{h}_+$, or $\mathring{h}_-$. 
By composing afterwards with an isotopy fixing the curve outside a neighbourhood of $D^0\setminus \mathcal{O}\subset M\setminus \mathcal{O}$ we can obtain an isotopy $\Phi:S^1 \times [0,1]\to M\setminus \mathcal{O}$ from $\Phi(\cdot,0) =  L_0$ to an adapted curve  $\Phi(\cdot,1) = L_0'$. 
To be able to extend the isotopy $\Phi$ to a Hamiltonian isotopy, it needs to be exact, that is, $(\alpha_s)_{s\in [0,1]}$, $\alpha_s := \iota|_{\frac{\partial}{\partial_s}} (\Phi^* \omega)_{(s,t)}$, needs to be a family of exact $1$-forms on $S^1$. 
One can achieve this by modifying, during each step described above, the isotopy in a small disk in $M\setminus (\mathcal{O} \cup D^0)$ that intersects the curve but not the segment of it contained in the bigon to be cancelled in that step. (We might have to divide each step above into several smaller steps, and each time choose a different small disk). After these modifications, the second endpoint $L_0'$ of that isotopy will still be an adapted curve. The Hamiltonian isotopy from $L_1$ to $L'_1$ is obtained similarly.

Since any $\rho \in \mathcal{Z}_1^k$ is represented by a unique chord, $\HF^{\rho}(L'_0, \psi^{-k}(L'_1)) \neq 0$. Assume first that $(L_0,L_1)$ is additionally non-degenerate. Then, by \eqref{iso_sigma} and~\eqref{iso_sigma2}, we have $\HF^{\Upsilon(\rho)}(L_0, \psi^{-k}(L_1)) \neq 0$.  Here $\Upsilon = \Upsilon^{L'_1, L_1}_{L'_0,L_0}$.
By Proposition~\ref{prop:crossing_energy}, there are almost complex structures and  $\epsilon>0$ such that the chain complexes $\CF^{\Upsilon(\rho)}(L_0, \psi^{-k}(L_1))$, $\rho \in \mathcal{Z}^+_1(l_0,l_1)$, are $\epsilon$-isolated in $\CF(L_0, \psi^{-k}(L_1))$. It follows then by Proposition~\ref{prop:algebaric_barcode} that 
$$\hbar(\psi;L_0, L_1) \geq \limsup_{l\to \infty} \frac{\log \# \mathcal{Z}_1^k}{k}\geq h_{\topo}(\psi) - e.$$
By Remark \ref{rem:epsilon_cont}, the inequality generalizes to degenerate pairs $(L_0,L_1)$ that satisfy the assumptions of the theorem.
\end{proof}

\section{Examples for which the strong barcode entropy is positive}\label{sec:strong}
In this last section  we prove Theorem~\ref{mainthm:strong}, which gives a class of examples for which the strong barcode entropy is positive. 
Let $(\Sigma,\sigma)$ be a closed symplectic surface of genus at least $2$, and let $I_1, I_2:\Sigma \to \Sigma$ be two commuting anti-symplectic involutions such that $\Sigma\setminus \Fix(I_1)= \Sigma'\cup \Sigma''$ has two components $\Sigma'$ and $\Sigma''$ with positive genus, and such that the fixed point set $\Fix(f)$ of $f=I_2 \circ I_1$ is non-empty and finite. Some examples can be already obtained by realizing $\Sigma$ as suitably embedded in $\R^3$ 
 and the involutions as the restrictions of reflexions on coordinate planes. 
The problem of classification of triples $(\Sigma,I_1,I_2)$ as above can be reduced %, by Moser isotopy arguments,
to the corresponding problem about anti-holomorphic involutions on Riemann surfaces. Those involutions are well understood, see \cite[\S 21]{Klein1882}; for a description of all commuting pairs see \cite{Natanzon86}.

In general, the fixed point set of anti-symplectic involutions on surfaces consists of a union of isolated points or of closed isolated  circles. With our assumptions, as it is easy to see, $\Fix(I_1)$ and $\Fix(I_2)$ must both consist of a union of closed circles. Moreover, $I_1$ flips $\Sigma'$ and $\Sigma''$, whereas $I_2$ preserves each of them. It follows that $\Fix(f) = \Fix(I_1)\cap \Fix(I_2)$.  We fix some $\hat{x}\in \Fix(f)$, and denote by $S$ the connected component in $\Fix(I_1)$ that contains $\hat{x}$. 
We also write $\overline{\Sigma}' = \Sigma' \cup \Fix(I_1)$ and $\overline{\Sigma}''=\Sigma'' \cup \Fix(I_1)$ for the surfaces that one obtains by adding the boundary $\Fix(I_1)$.

Let $M=\Sigma \times \Sigma$, equipped with the symplectic form $\omega = \sigma \oplus -\sigma$, and let $L\subset M$ be the Lagrangian submanifold defined by $L=\{(x,f(x))\, |\, x \in \Sigma\}$. The surface $\Sigma$ also embeds as a diagonal into $M$, $\phi:\Sigma \to M, x \mapsto (x,x)$, and we denote this Lagrangian submanifold by $\Delta = \im(\phi)$. By the  Lagrangian neighbourhood theorem, see e.g.\ \cite{McDuffSalamon_intro}, there exist a neighbourhood $\mathcal{N} = \mathcal{N}(\Sigma_0)$ of the zero section $\Sigma_0$ in $T^*\Sigma$, a neighbourhood $V$ of the diagonal  $\Delta\subset M$, and a diffeomorphism $\Phi:\mathcal{N}\to V$ such that $\Phi^*\omega = -d\lambda$, $\Phi(x)= \phi(x)$ if $x\in \Sigma_0 \cong \Sigma$. Here $\lambda$ is the canonical Liouville $1$-form on $T^*\Sigma$. We may assume, by choosing the neighbourhoods sufficiently small and by composition with a Hamiltonian isotopy fixing $\Delta$, that every connected component of $L\cap V$ is a disk, identified via $\Phi$ with the fibre of $T^*\Sigma$ in $\mathcal{N}$ over 
 a fixed point of~$f$.\footnote{For $\mathcal{N}$ small, by the Lagrangian neighbourhood theorem applied to a fibre $T^*_x \Sigma \cap \mathcal{N}$ over some $x\in \Fix(x)$, the Lagrangian disk $\Phi^{-1}(L\cap V)$ through $(x,x)$ becomes a graph of a $1$-form over $T^*_x \Sigma \cap \mathcal{N}$. Then we can use local primitives to construct that isotopy.} Let $D$ be the connected component of  $L\cap V$ that contains $(\hat{x},\hat{x})$.

Before we give the definition of the Hamiltonian diffeomorphism $\psi$ on $M$ for the proof of Theorem~\ref{mainthm:strong}, let us  make some further topological observations. 
The diagonal embedding $\phi: \Sigma \hookrightarrow M$, $x\mapsto (x,x)$, induces an injective homomorphism $\pi_1(\Sigma, \hat{x}) \to$ $\pi_1(M,(\hat{x},\hat{x}))$, and via the composition with the 
 map $\pi_1(M,(\hat{x},\hat{x})) \to \pi_1(M;L,L)$ induced by inclusion, any~$\mathfrak{a}\in \pi_1(\Sigma,\hat x)$ gives rise to  a well-defined homotopy class $\mathfrak{c}_{\mathfrak{a}}\in \pi_1(M;L,L)$ of paths in $M$ with endpoints in $L$.  Let $\mathfrak{A}\subset  \pi_1(\Sigma,\hat x)$ be the subset of classes~$\mathfrak{a}$ that have a  representative curve in $\overline{\Sigma}'$ and that cannot be represented by a multiple of a curve parametrizing~$S$. 
\begin{lem}\label{lem:topological}
Let $x_0, x_1\in \Fix(f)$, and $\tau$ any path in $\Sigma$ from $x_0$ to $x_1$. Denote by $\mathfrak{c}_0$ the homotopy class of paths in $M$ with boundary in $L$ defined by $\hat{\tau}= \phi \circ \tau$.
 Then, for all $\mathfrak{a} \in \mathfrak{A}$ it holds that $\mathfrak{c}_0\neq \mathfrak{c}_{\mathfrak{a}}$, unless $x_0=x_1 =\hat{x}$.
Moreover, if $x_0=x_1=\hat{x}$ and $\mathfrak{c}_0= \mathfrak{c}_{\mathfrak{a}}$ for some $\mathfrak{a} \in \mathfrak{A}$,  then any homotopy of paths in $\mathfrak{c}_0$ from $\hat{\tau}$ to itself is homotopic among such homotopies to the constant homotopy. 
\end{lem}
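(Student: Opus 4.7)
The plan is to translate the lemma into a concrete statement about conjugacy classes in $\pi_1(\Sigma,\hat{x})$. First I would compute $\pi_1(M;L,L)$ using the fibration $\Omega_{p_0,p_0}(M) \to P_{L,L}(M) \to L\times L$ and the fact that $L \hookrightarrow M$ induces $\mathfrak{b} \mapsto (\mathfrak{b}, f_*\mathfrak{b})$ on fundamental groups (with $L\cong \Sigma$ via the first projection). A calculation of the $\pi_1(L)\times \pi_1(L)$-orbits on $\pi_1(M) = \pi_1(\Sigma,\hat{x})^2$ identifies $\pi_1(M;L,L)$ with the set of conjugacy classes of $\pi_1(\Sigma,\hat{x})$; under this identification, $\mathfrak{c}_0$ corresponds to the free homotopy class of the loop $\tau \cdot (f\circ \bar{\tau})$ in $\Sigma$, and $\mathfrak{c}_{\mathfrak{a}}$ to that of $\mathfrak{a}(f_*\mathfrak{a})^{-1}$.

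For the first assertion I would exploit the graph-of-groups decomposition of $\pi_1(\Sigma)$ induced by the splitting $\Sigma = \bar{\Sigma}' \cup_{\Fix(I_1)} \bar{\Sigma}''$. Using that $I_1$ swaps the two halves while $I_2$ preserves each, one sees that $f$ also swaps them and restricts to a reflection on every circle of $\Fix(I_1)$; in particular, for $\mathfrak{a}\in\mathfrak{A}$, the element $\mathfrak{a}(f_*\mathfrak{a})^{-1}$ is cyclically reduced of length two in the amalgamated product, with syllables in $\pi_1(\bar{\Sigma}')\setminus \langle [S]\rangle$ and $\pi_1(\bar{\Sigma}'')\setminus \langle [S]\rangle$ respectively. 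Equating this with the cyclic normal form of $\tau(f\circ \bar{\tau})$ and running a cancellation analysis---using that $\Fix(f)$ is finite and that $f$ acts by reflection on the circles of $\Fix(I_1)$---forces $\tau$ to be, up to boundary-parallel homotopy, a loop through $\hat{x}$, giving $x_0 = x_1 = \hat{x}$.

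For the second assertion I would use the long exact sequence of the fibration above together with the asphericity of $\Sigma$: since $\pi_2(M) = 0$ and hence $\pi_1(\Omega_{p_0,p_0}(M)) = 0$, the self-homotopy group $\pi_1(P_{L,L}(M),\hat{\tau})$ embeds in $\pi_1(L\times L) = \pi_1(\Sigma)^2$ as the stabilizer of $[\hat{\tau}]$. Reducing the stabilizer equations leaves the single constraint that $f_*\mathfrak{b}_1$ commutes with a representative of the nontrivial class $\mathfrak{c}_0^{-1}$; since the centralizer of a nontrivial element in a surface group of genus at least two is infinite cyclic, and since the cyclic-length-two structure of $\mathfrak{c}_0$ combined with the explicit $f_*$-action on the vertex groups of the amalgamation rules out nonzero powers of the primitive root being admissible, one concludes $\mathfrak{b}_1 = 1$. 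I expect the main obstacle to be the amalgamated-product bookkeeping in the first assertion when $\Fix(I_1)$ has multiple components and $\tau$ crosses them many times---the cancellation argument must rule out all coincidental reductions that could spuriously produce a length-two cyclic form.
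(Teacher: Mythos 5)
Your opening reduction is correct and is genuinely different in flavour from the paper: identifying $\pi_1(M;L,L)$ with the double cosets of $\pi_1(\Sigma)^2$ under $\mathfrak{b}\mapsto(\mathfrak{b},f_*\mathfrak{b})$, hence with conjugacy classes of $\pi_1(\Sigma,\hat x)$, and matching $\mathfrak{c}_0$ with the free class of $\tau\#(f\circ\bar\tau)$ and $\mathfrak{c}_{\mathfrak{a}}$ with that of $\mathfrak{a}\,f_*(\mathfrak{a})^{-1}$, checks out (the paper instead works directly with the homotopy, its boundary traces in $L$, a projection to the free product obtained by collapsing $\Fix(I_1)$, and a lift to the universal cover). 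The gap in your first assertion is the step ``the cancellation analysis forces $x_0=x_1=\hat x$''. A free homotopy class does not see basepoints; the endpoints enter only through the comparison element $c=[\delta\#(f\circ\bar\delta)]$ for a path $\delta$ from $\hat x$ to the other fixed point, and what you must actually exclude is that $g\,c\,f_*(g)^{-1}c^{-1}$ is conjugate to $\mathfrak{a}\,f_*(\mathfrak{a})^{-1}$ for some $g$ when $c$ comes from a fixed point $\neq\hat x$. This is a Nielsen-separation statement about the involution $f$, and it is exactly here that the paper uses a non-combinatorial input: lifting $f$ to the universal cover and observing that an orientation-preserving involution of the plane with two fixed points is the identity, so $f=\mathrm{id}$, contradicting finiteness of $\Fix(f)$. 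Your sketch gives no mechanism replacing this (and note that a homotopy cannot move the endpoints, so ``$\tau$ is, up to homotopy, a loop through $\hat x$'' is not the needed conclusion). Also, ``cyclically reduced of length two in the amalgamated product'' only typechecks when $\Fix(I_1)$ is a single circle; with several circles you have a graph of groups with stable letters, and $\mathfrak{A}$ only excludes powers of $S$, not of the other boundary circles, so even the normal form of $\mathfrak{a}f_*(\mathfrak{a})^{-1}$ needs more care than you allow.

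The more decisive gap is in your second assertion. Writing $t=[\tau]$, your stabilizer equations reduce, as you say, to $\mathfrak{b}_0=t\,\mathfrak{b}_1t^{-1}$ together with $f_*(\mathfrak{b}_1)\in Z\bigl(f_*(t)^{-1}t\bigr)$; but you then claim the centralizer argument ``rules out nonzero powers of the primitive root'', forcing $\mathfrak{b}_1=1$. That is false: the pair $(\mathfrak{b}_0,\mathfrak{b}_1)=\bigl(f_*(t)\,t^{-1},\,t^{-1}f_*(t)\bigr)$ always stabilizes $(t,t)$ (a one-line check using $f_*^2=\mathrm{id}$), and here $f_*(\mathfrak{b}_1)=f_*(t)^{-1}t$ is precisely the element whose centralizer you are computing, i.e.\ an admissible nonzero power. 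Geometrically this is unavoidable: under the identification of the path space $P_{L,L}(\Sigma\times\Sigma)$ with the free loop space of $\Sigma$ (send $(u_1,u_2)$ to $u_1\#(f\circ\bar u_2)$), this stabilizer element is realized by rotating the loop $\tau\#(f\circ\bar\tau)$ once around itself, which is an honest self-homotopy of $\hat\tau$ through paths with endpoints in $L$. So your route cannot end with a trivial stabilizer, and the argument as proposed breaks at exactly this point; any proof of the second assertion has to dispose of this rotation class by a different mechanism. The paper does not argue via the stabilizer at all: it first shows the boundary traces $\beta_0,\beta_1$ are contractible (using the free-product projection and the universal-cover argument mentioned above), reduces to a self-homotopy with endpoints fixed at $(\hat x,\hat x)$, and contracts that using $\pi_2(M)=0$; if you want to salvage your approach you would need to confront the rotation element explicitly rather than exclude it.
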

\begin{proof}
Let $\mathfrak{a} \in \mathfrak{A}$ and choose a representative in $\mathfrak{c}_{\mathfrak{a}}$ of the form $\hat{\gamma}= \phi \circ \gamma$, where $\gamma$ is a loop in $\Sigma$ with basepoint $\hat{x}$. 
Assume that there is a homotopy $h:[0,1]^2\to M$ in $M$ relative $L$ between $h(0,t) = \hat{\tau}(t)$ and $h(1,t) = \hat{\gamma}(t)$. We obtain paths $\hat{\beta}_i:[0,1] \to L$, $i=0,1$, from $(x_i,x_i)$ to $(\hat{x},\hat{x})$, defined by  $\hat{\beta}_i(s):= h(s,i)$. We can write $\hat{\beta}_i(s) = (\beta_i(s), f(\beta_i(s)))\in M = \Sigma \times \Sigma$, $s\in [0,1]$, $i=0,1$. The concatenation of paths $\hat{\beta}_0 \# \hat{\gamma}\# \overline{\hat{\beta}_1}$ is homotopic relative its endpoints to $\hat\tau$. And projecting such a homotopy to the first as well as second component in $M=\Sigma \times \Sigma$ shows that $\tau$ is homotopic in $\Sigma$ relative its endpoints both to the concatenation of paths  $\beta_0 \# \gamma \# \overline{\beta_1}$ and to $(f\circ \beta_0) \# \gamma \# (\overline{f \circ \beta_1})$. Hence $(\overline{f\circ \beta_0}) \# \beta_0 \# \gamma \# \overline{\beta_1} \# (f \circ \beta_1)$ is homotopic to $\gamma$ relative $\hat{x}$.

We claim that the  loops $\theta_0 =(\overline{f\circ \beta_0}) \# \beta_0$ and $\theta_1 =\overline{\beta_1} \# (f \circ \beta_1)$ must be contractible relative $\hat{x}$. Indeed, first, at least one is not homotopic to a non-trivial multiple of $S$ relative $\hat{x}$, since $\gamma$ is not homotopic to such a multiple and $\pi_1(\overline{\Sigma}',\hat x)$ is free. Say that $\theta_0$ is such a loop, the other case is analogous. It follows that the image of $[\theta_0]\in \pi_1(\Sigma,\hat{x})$ under the naturally induced  map $p:\pi_1(\Sigma, \hat{x}) \to \pi_1(\hat{\Sigma}' \cup \hat{\Sigma}'',\hat{x}) = \pi_1(\hat{\Sigma}',\hat x)\star \pi_1(\hat{\Sigma}'',\hat x)$ is non-trivial, where $\hat{\Sigma}'$ and $\hat{\Sigma}''$ are the spaces given by collapsing the boundary components of $\overline{\Sigma}'$ resp.\ $\overline{\Sigma}''$ to a single point. Since $f$ flips $\Sigma'$ and $\Sigma''$, the element $p([\theta_0])$ can be written in reduced terms (with respect to some generators) as a word that has both a non-zero term in $\pi_1(\hat{\Sigma}',\hat x)$ and one in $\pi_1(\hat{\Sigma}'',\hat x)$. Hence $p([\theta_0] \cdot [\gamma]\cdot [\theta_1])$ cannot be represented as a curve in $\hat{\Sigma}'$ and therefore $[\theta_0] \cdot [\gamma]\cdot [\theta_1]= [\gamma]\in \mathfrak{A}$ cannot be represented as a curve in $\overline{\Sigma}'$, a contradiction.  

To show the first assertion of the lemma, we assume additionally, by contradiction,  that $x_i\neq \hat{x}$ for some $i\in \{0,1\}$. That means that the contractible loop $\theta:= \theta_i$ is a concatenation of an arc  $\beta$ from $\hat{x}$ to $x_i$ and the arc $\overline{f\circ \beta}$ from $x_i$ to $\hat{x}$. Hence $\beta$ is homotopic to $f\circ \beta$ relative their endpoints. 
We can lift $f$ to a homeomorphism $\widetilde{f}$ on the universal covering  $\widetilde{\Sigma}$ such that $\widetilde{f}$ fixes a lift $\widetilde{{x}}$ of $\hat{x}$. Note that ${\widetilde{f}}^2$ is a deck transformation with fixed point and hence necessarily equal to the identity. Let  $\widetilde{\beta}$ be the lift of $\beta$ starting at~$\widetilde{{x}}$. By the above, also the second endpoint of $\widetilde{\beta}$ is fixed by $\widetilde{f}$.  Therefore $\widetilde{f}$, as any orientation preserving involution of the plane with at least two fixed points, must be the identity, which 
 can be shown for example by elementary arguments involving the $\widetilde{f}$-invariant set $\widetilde{\beta} \cup \widetilde{f}(\widetilde{\beta})$. Hence $f$ is the identity, which contradicts our assumptions.

To conclude the second assertion of the lemma, let $x_0=x_1 = \hat{x}$ and assume that $\mathfrak{c}_0 = \mathfrak{c}_{\mathfrak{a}}$ for some $\mathfrak{a} \in \mathfrak{A}$. By an argument as in the previous paragraph, one sees that $\beta_0$ and $\beta_1$ are contractible loops, and hence  $\hat{\beta}_0$ and $\hat{\beta}_1$ are contractible loops. Hence we can deform the homotopy $h$ among such homotopies to a homotopy $h':[0,1]^2 \to M$ such that $h'(s,0)= h'(s,1) = \hat{x}$ for all $s\in [0,1]$. That $h'$ is homotopic fixing the boundary $h'|_{\partial [0,1]^2}$ to  $h'':[0,1]^2 \to M$,  $h''(s,t) := \hat{\tau}(t)$,  $s,t\in [0,1]$, follows from  the homotopy exact sequence for the fibration $\Omega M \hookrightarrow \Lambda M \to M$ of the free loop space $\Lambda M$ with based loop space $\Omega M$ as fibre, since $M$ is aspherical $(\pi_2(M) = 0)$ and atoroidal $(\pi_1(\Lambda M) = 0)$.
\end{proof}
\begin{proof}[Proof of Theorem \ref{mainthm:strong}]
We keep the notation from above. Fix a hyperbolic metric $g$ on $\Sigma$.  We may choose the 
neighbourhoods $\mathcal{N}$ and $V$ constructed above in such a way that for some $0<\delta < 1/3$ we have that $\mathcal{N} = \{v \in T^*\Sigma \,|\, |v|_g \leq 3\delta\}$. Here $|v|_g$ denotes the norm of $v$ in the cotangent fibre with respect to the to $g$ dual metric. In coordinates $(r,x) \in \mathcal{N}\setminus \Sigma_0$, where  $r= |v|_g\in (0,3\delta), x = v/|v|_g \in S^*_g\Sigma$, the Liouville form $\lambda$ can be written as $\lambda =r\alpha$, with $\alpha = \lambda|_{S^*_g\Sigma}$. Here $S^*_g\Sigma\subset T^*M$ denotes the unit co-sphere bundle with respect to $g$. 

Let $C>2/\delta$, and choose a smooth function $H_0$ on $\mathcal{N}$ that depends only on the $r$-coordinate in $\mathcal{N} \setminus \Sigma_0$,  and, for some $0<\epsilon<\delta$ satisfies the following: 
\begin{itemize}
    \item $H_0(r,x) = H_0(r) = - C -\epsilon$, for $0\leq r \leq  \delta-\epsilon$;
    \item $H_0(r)$ is strictly convex, for $\delta-\epsilon< r< \delta$;
    \item $H_0(r) = \frac{C}{\delta} r -2C$, for $\delta \leq r \leq 2\delta$;
    \item $H_0(r)$ is strictly concave, for $2\delta< r< 2\delta+\epsilon$;
    \item $H_0(r)=\epsilon$, for $r\geq 2\delta+\epsilon$. 
\end{itemize} 
To simplify notation, we identify $\mathcal{N}$ with $V\subset M$ and write for an element $v\in V$  instead of $v=\Phi(r,x)$ simply $v=(r,x)$. This defines $H_0:V \to \R$, which  moreover extends to a smooth function $H:M \to \R$ as 
\begin{equation*}
H(z)=\begin{cases}H_0(z), &\text{ if } z\in V, \\\epsilon, &\text{ otherwise. } \end{cases}
\end{equation*}
Every class $\mathfrak{a} \in \pi_1(\Sigma,\hat{x})$ contains a unique geodesic arc $\gamma_{\mathfrak{a}}:[0,T_{\mathfrak{a}}] \to \Sigma$, $T_{\mathfrak{a}}\geq 0$,  from $\hat{x}$ to itself, parametrized by arc length. Let $\eta_{\mathfrak{a}}:[0,T_{\mathfrak{a}}] \to S^*_g\Sigma$ be its natural lift to the unit co-sphere bundle. % and let $\hat{\gamma}_{\mathfrak{a}}= \phi \circ \gamma_{\mathfrak{a}}$ be the image of $\gamma_{\mathfrak{a}}$ under the diagonal embedding. 
We denote by $\psi = \phi_H^1$ the Hamiltonian diffeomorphism induced by $H$. 
Let $\mathfrak{a}\in \pi_1(\Sigma,\hat{x})$ be non-trivial. If $n$ is such that $nC/\delta >T_{\mathfrak{a}}$, we obtain two distinguished  intersection points $y_{0}$ and $y_1$ in $L\cap \psi^{-n}(L)$, given by $y_i = (r_i,\eta_{\mathfrak{a}}(0))\in V$, $i=0,1$, where $r_0<r_1$ are such that $\partial_r H(r_i,x) = T_{\mathfrak{a}}/n$.  
The Hamiltonian chords  $\overline{y}_i:[0,1] \to M$, $i=0,1$, given by  $\overline{y}_i(t) := \varphi^{nt}_{H}(y_i)$, are, as chords in $M$ relative to $L$, both of class $\mathfrak{c}_{\mathfrak{a}}$. If $\mathfrak{a} \in \mathfrak{A}$, then, by the first assertion of Lemma~\ref{lem:topological},  there exists no other Hamiltonian chord in $\mathfrak{a}$. Moreover,  by the second assertion of Lemma~\ref{lem:topological}, any homotopy between $\overline{y}_0$ and $\overline{y}_1$ relative to $L$ is homotopic to one that is contained in $V$ relative to $D$.   

There is a bijection of (a continuous family of) strips between $y_0$ and $y_1$ with boundaries in $L$ and $\psi^{-n}(L)$, and (a continuous family of) homotopies relative to $L$ between $\overline{y}_0$ and $\overline{y}_1$. 
It follows that the action difference 
$\mathcal{A}(y_1,v_1) - \mathcal{A}(y_0,v_0)$ (with respect to the pair $(L,\psi^{-n}(L))$) is independent of the cappings $v_0$ and $v_1$, and is identical to $-\int v^*\omega$, where $v:[0,1]^2\to V$ is a strip in $V$ with $v(i,t) \equiv y_i$, $i=0,1$, and $v(s,0) \in D$, $v(s,1) \in \psi^{-n}(D)$ for all $s\in [0,1]$. 
To calculate that action difference, consider the path  $z :[0,1] \to V\subset  M$, $$z(s):=(r(s),\eta_{\mathfrak{a}}(T_{\mathfrak{a}})) = (r_1 + s(r_2-r_1), \eta_{\mathfrak{a}}(T_{\mathfrak{a}})),$$ from $
\psi^n(y_0)$ to $\psi^n(y_1)$ in $D$. %Let $\zeta:[0,1] \to \R$, $\zeta(s) = T_{\mathfrak{a}}/\partial_r H(r(s))$, and note that $\varphi_H^{\zeta(s)}(r(s),\eta_{\mathfrak{a}}(0)) = (r(s),\eta_{\mathfrak{a}}(T_{\mathfrak{a}})) \in D$ for all $s\in [0,1]$. Hence  
%$v:[0,1]^2 \to M$, defined by  $$v(s,t):=\varphi_H^{\zeta(s) - t(\zeta(s) + n)}(z(s)),$$ is a strip as above, and we get 
Consider a strip $v:[0,1]^2 \to M$ as above with $v(s,0)=z(s)$ and $v(s,1) = \psi^{-n}(z(s))$ for all $s\in [0,1]$. Then, 
\begin{align*}
\mathcal{A}(y_1,v_1) - \mathcal{A}(y_0,v_0) &= -\int_{[0,1]^2} v^*\omega \\ 
&= -\left[\int  z^* \lambda - \int(\psi^{-n} \circ z)^* \lambda\right]  \\
&=  \int(\psi^{-n} \circ z)^* \lambda \\
&=   \int_{0}^{1} r(s)\alpha(r'(s) D\psi^{-n}(z(s))\partial_r) ds \\
&=-n \int_{r_0}^{r_1} r  \partial^2_rH(r,x)dr\\
&=-n[r   \partial_rH(r,x)]_{r_0}^{r_1} + n\int_{r_0}^{r_1}   \partial_rH(r,x)dr \\
&=-(r_1-r_0)T_{\mathfrak{a}} + n(H(r_1,x) - H(r_0,x)) \\
&> nC- T_{\mathfrak{a}}. 
\end{align*}
For the fourth equation note that  $\alpha(\frac{\partial}{\partial_t}\phi^t_H(x,r))$ is identical  to $\partial_r H(x,r)$ for all $t\in \R$. 
Note that all intersections considered above are transverse and that $(L,L)$ can be perturbed to a non-degenerate pair, keeping those intersections unchanged.  It follows that any non-trivial $\mathfrak{a}\in \mathfrak{A}$  with ${T}_{\mathfrak{a}} \leq T$ gives rise to a finite bar of length greater than $nC- T$, 
that is, 
$$b_{nC-T}(\psi;L,L) \geq \# \{\mathfrak{a} \in \mathfrak{A}\, |\, T_{\mathfrak{a}}\leq T\}=:N(T).$$
Since $\Sigma'$ has positive genus and negative Euler characteristic, the number of elements in $\mathfrak{A} \subset \pi_1(\Sigma,\hat{x})$ grows exponentially, and hence
$$N(T)\geq e^{\mu T}$$
for some $\mu>0$. 
Altogether, with $R:=\frac{C}{2}$,
$$b_{Rn}(\psi;L,L)\geq N(Rn) \geq e^{\mu R n},$$ 
and therefore 
$\mathsf{H}^R(\psi;\mathcal{L}) \geq \mu R=:E.$
\end{proof}

\bibliographystyle{plain}
\bibliography{biblio}
\end{document}